\providecommand{\U}[1]{\protect\rule{.1in}{.1in}}
\newtheorem{theorem}{Theorem}[section]
\newtheorem{corollary}[theorem]{Corollary}
\newtheorem{definition}[theorem]{Definition}
\newtheorem{lemma}[theorem]{Lemma}
\newtheorem{proposition}[theorem]{Proposition}
\newtheorem{remark}[theorem]{Remark}
\newenvironment{proof}[1][Proof]{\noindent\textbf{#1.} }{\ \rule{0.5em}{0.5em}}
\begin{document}

\title{\textit{Asymptotics for the best Sobolev constants and their extremal
functions}}
\author{{\small \textbf{Grey Ercole\thanks{Corresponding author}\ \ and Gilberto de
Assis Pereira}}\\{\small \textit{Departamento de Matem\'{a}tica - Universidade Federal de Minas
Gerais}}\\{\small \textit{Belo Horizonte, MG, 30.123-970, Brazil.}}\\{\small grey@mat.ufmg.br, gilbertoapereira@yahoo.com.br}}
\maketitle

\begin{abstract}
Let $\Omega$ be a bounded and smooth domain of $\mathbb{R}^{N},$ $N\geq2,$ and
consider%
\[
\Lambda_{p}(\Omega):=\lim_{q\rightarrow\infty}\lambda_{q}(\Omega),\;p>N,
\]
where
\[
\lambda_{q}(\Omega):=\inf\left\{  \left\Vert \nabla u\right\Vert
_{L^{p}(\Omega)}^{p}:u\in W_{0}^{1,p}(\Omega)\text{ and }\left\Vert
u\right\Vert _{L^{q}(\Omega)}=1\right\}  ,\;q\geq1.
\]

In the first part of the paper we show that
\[
\Lambda_{p}(\Omega)=\min\left\{  \left\Vert \nabla u\right\Vert _{L^{p}%
(\Omega)}^{p}:u\in W_{0}^{1,p}(\Omega)\text{ and }\left\Vert u\right\Vert
_{L^{\infty}(\Omega)}=1\right\}  ,
\]
the minimum being achieved by a positive function $u_{p}=\lim_{q_{n}%
\rightarrow\infty}w_{q_{n}}$ (convergence in $W_{0}^{1,p}(\Omega)$ and also in
$C(\overline{\Omega})$), where $w_{q}$ denotes a positive minimizer of
$\lambda_{q}(\Omega).$ We also prove that any minimizer $u_{p}$ of
$\Lambda_{p}(\Omega)$ satisfies
\[
-\Delta_{p}u_{p}=u_{p}(x_{p})\Lambda_{p}(\Omega)\delta_{x_{p}},
\]
where $\delta_{x_{p}}$ is the Dirac delta distribution concentrated at the
unique maximum point of $\left\vert u_{p}\right\vert .$

In the second part of the paper we first prove that
\[
\lim_{p\rightarrow\infty}\Lambda_{p}(\Omega)^{\frac{1}{p}}=\dfrac
{1}{\left\Vert \rho\right\Vert _{L^{\infty}(\Omega)}},
\]
where $\rho$ denotes the distance function to the boundary $\partial\Omega.$
Then, we show that there exist $p_{n}\rightarrow\infty$ and $u_{\infty}\in
W_{0}^{1,\infty}(\Omega)$ such that: $u_{p_{n}}\rightarrow u_{\infty}$
uniformly in $\overline{\Omega},$ $0<u_{\infty}\leq\frac{\rho}{\left\Vert
\rho\right\Vert _{L^{\infty}(\Omega)}}$ in $\Omega,$ and%
\[
\left\{
\begin{array}
[c]{ll}%
\Delta_{\infty}u_{\infty}=0 & \text{in }\Omega\backslash\left\{  x_{\ast
}\right\} \\
u_{\infty}=\frac{\rho}{\left\Vert \rho\right\Vert _{L^{\infty}(\Omega)}} &
\text{on }\partial\Omega\cup\left\{  x_{\ast}\right\}  ,
\end{array}
\right.
\]
in the viscosity sense, where $x_{\ast}:=\lim x_{p_{n}}$ is a maximum point of
$\rho.$

We also prove that $x_{\ast}$ is the unique maximum point of $u_{\infty}$ and
give conditions on $\Omega,$ under which $u_{\infty}=\frac{\rho}{\left\Vert
\rho\right\Vert _{L^{\infty}(\Omega)}}.$\bigskip

\end{abstract}

\noindent\textbf{Keywords:}{\small {\ Asymptotic behavior, best Sobolev
constants, Dirac delta function, infinity Laplacian, $p$-Laplacian, viscosity
solutions.}}

\noindent\textbf{2010 Mathematics Subject Classification.} 35B40; 35J25; 35J92.

\section{Introduction}

Let $p>1$ and let $\Omega$ be a bounded and smooth domain of $\mathbb{R}^{N},$
$N\geq2.$ It is well known that the Sobolev immersion $W_{0}^{1,p}%
(\Omega)\hookrightarrow L^{q}(\Omega)$ is compact if
\[
1\leq q<p^{\star}:=\left\{
\begin{array}
[c]{ll}%
\frac{Np}{N-p} & \text{if }1<p<N\\
\infty & \text{if }p\geq N.
\end{array}
\right.
\]
As consequence of this fact, for each $q\in\lbrack1,p^{\star})$ there exists
$w_{q}\in L^{q}(\Omega)$ such that $\left\Vert w_{q}\right\Vert _{q}=1$ and%
\begin{equation}
\lambda_{q}(\Omega):=\inf\left\{  \left\Vert \nabla u\right\Vert _{p}^{p}:u\in
W_{0}^{1,p}(\Omega)\text{ and }\left\Vert u\right\Vert _{q}=1\right\}
=\left\Vert \nabla w_{q}\right\Vert _{p}^{p}. \label{lambdq}%
\end{equation}
(Throughout this paper $\left\Vert \cdot\right\Vert _{s}$ denotes the standard
norm of $L^{s}(\Omega),$ $1\leq s\leq\infty.$)

The value $\lambda_{q}(\Omega)$ is, therefore, the best constant $c$ in the
Sobolev inequality%
\[
c\left\Vert u\right\Vert _{q}^{p}\leq\left\Vert \nabla u\right\Vert _{p}%
^{p};\text{ \ }u\in W_{0}^{1,p}(\Omega)
\]
and $w_{q}$ is a corresponding extremal (or minimizer) function.

The Euler-Lagrange formulation associated with the minimizing problem
(\ref{lambdq}) is
\begin{equation}
\left\{
\begin{array}
[c]{clll}%
-\Delta_{p}u & = & \lambda_{q}(\Omega)\left\vert u\right\vert ^{q-2}u &
\text{in }\Omega\\
u & = & 0 & \text{on }\partial\Omega
\end{array}
\right.  \label{EL}%
\end{equation}
where $\Delta_{p}u:=\operatorname{div}\left(  \left\vert \nabla u\right\vert
^{p-2}\nabla u\right)  $ is the $p$-Laplacian operator. It turns out that
$\left\vert w_{q}\right\vert $ is a nonnegative and nontrivial solution of
(\ref{EL}), since $\left\vert w_{q}\right\vert $ also minimizes $\lambda
_{q}(\Omega).$ Thus, the maximum principle (see \cite{Vazquez}) assures that
$w_{q}$ does not change sign in $\Omega.$

From now on, we denote by $w_{q}$ any positive extremal function of
$\lambda_{q}(\Omega).$ Therefore, such a function enjoys the following
properties
\[
\left\Vert w_{q}\right\Vert _{q}=1,\text{ }\left\Vert \nabla w_{q}\right\Vert
_{p}^{p}=\lambda_{q}(\Omega)\text{ \ and }\left\{
\begin{array}
[c]{clll}%
-\Delta_{p}w_{q} & = & \lambda_{q}(\Omega)w_{q}^{q-1} & \text{in }\Omega\\
w_{q} & = & 0 & \text{on }\partial\Omega\\
w_{q} & > & 0 & \text{in }\Omega.
\end{array}
\right.
\]

It can be checked (see \cite[Lemma 4.2]{GECCM}), as a simple application of
the H\"{o}lder inequality, that the function
\begin{equation}
q\in\lbrack1,p^{\star})\mapsto\lambda_{q}(\Omega)\left\vert \Omega\right\vert
^{\frac{p}{q}}\text{ } \label{monot}%
\end{equation}
is decreasing for any fixed $p>1,$ where here and from now on $\left\vert
D\right\vert $ denotes the Lebesgue volume of the set $D,$ i.e. $\left\vert
D\right\vert =\int_{D}\mathrm{d}x.$

The monotonicity of the function in (\ref{monot}) guarantees that%
\[
\Lambda_{p}(\Omega):=\lim_{q\rightarrow p^{\star}}\lambda_{q}(\Omega)
\]
is well defined and also that%
\[
0\leq\Lambda_{p}(\Omega)=\inf_{q\geq1}\left(  \lambda_{q}(\Omega)\left\vert
\Omega\right\vert ^{\frac{p}{q}}\right)  \left(  \lim_{q\rightarrow p^{\star}%
}\left\vert \Omega\right\vert ^{-\frac{p}{q}}\right)  \leq\lambda_{1}%
(\Omega)\left\vert \Omega\right\vert ^{p}\left(  \lim_{q\rightarrow p^{\star}%
}\left\vert \Omega\right\vert ^{-\frac{p}{q}}\right)  .
\]

It is known that
\begin{equation}
\Lambda_{p}(\Omega)=\left\{
\begin{array}
[c]{ll}%
S_{p} & \text{if }1<p<N\\
0 & \text{if }p=N,
\end{array}
\right.  \label{GamaoleqN}%
\end{equation}
where $S_{p}$ is the Sobolev constant: the best constant $S$ in the Sobolev
inequality
\[
S\left\Vert u\right\Vert _{L^{p^{\star}}(\mathbb{R}^{N})}^{p}\leq\left\Vert
\nabla u\right\Vert _{L^{p}(\mathbb{R}^{N})}^{p},\text{ \ \ }u\in W_{0}%
^{1,p}(\mathbb{R}^{N}).
\]
It is explicitly given by (see \cite{Auban, Talenti})
\begin{equation}
S_{p}:=\pi^{\frac{p}{2}}N\left(  \frac{N-p}{p-1}\right)  ^{p-1}\left(
\frac{\Gamma(N/p)\Gamma(1+N-N/p)}{\Gamma(1+N/2)\Gamma(N)}\right)  ^{\frac
{p}{N}} \label{SNp}%
\end{equation}
where $\Gamma(t)=\int_{0}^{\infty}s^{t-1}e^{-s}ds$ is the Gamma Function.

The case $1<p<N$ in (\ref{GamaoleqN}) can be seen in \cite{GEJMAA}, whereas
the case $p=N$ is consequence of the following result proved in
\cite{RenWei-p}
\[
\lim_{q\rightarrow\infty}q^{N-1}\lambda_{q}(\Omega)=\frac{N^{2N-1}\omega_{N}%
}{(N-1)^{N-1}}e^{N-1},
\]
where
\begin{equation}
\omega_{N}=\frac{\pi^{\frac{N}{2}}}{\Gamma(\frac{N}{2}+1)} \label{volB1}%
\end{equation}
is the volume of the unit ball $B_{1}.$ (From now on $B_{\rho}$ denotes the
ball centered at the origin with radius $\rho$).

As we can see from (\ref{GamaoleqN}) the value $\Lambda_{p}(\Omega)$ does not
depend on $\Omega,$ when $1<p\leq N.$ This property does not hold if $p>N.$
Indeed, by using a simple scaling argument one can show that%
\[
\Lambda_{p}(B_{R})=\Lambda_{p}(B_{1})R^{N-p}.
\]

In the first part of this paper, developed in Section \ref{part1}, we consider
a general bounded domain $\Omega$ and $p>N$ and show that
\begin{equation}
\Lambda_{p}(\Omega)=\inf\left\{  \left\Vert \nabla u\right\Vert _{p}^{p}:u\in
W_{0}^{1,p}(\Omega)\text{ \ and \ }\left\Vert u\right\Vert _{\infty
}=1\right\}  . \label{inf}%
\end{equation}
Thus, $\Lambda_{p}(\Omega)$ is the best constant associated with the (compact)
Sobolev immersion
\[
W_{0}^{1,p}(\Omega)\hookrightarrow C(\overline{\Omega}),\text{ \ \ }p>N,
\]
in the sense that it is the sharp value for a constant $c$ satisfying%
\[
c\left\Vert u\right\Vert _{\infty}^{p}\leq\left\Vert \nabla u\right\Vert
_{p}^{p},\text{ \ for all\ }u\in W_{0}^{1,p}(\Omega).
\]

We also show that there exists $q_{n}\rightarrow\infty$ such that $w_{q_{n}}$
converges strongly, in both Banach spaces $C(\overline{\Omega})$
and\ $W_{0}^{1,p}(\Omega),$ to a positive function $u_{p}$ satisfying
$\left\Vert u_{p}\right\Vert _{\infty}=1.$ Moreover, we prove that this
function attains the infimum at (\ref{inf}):%
\begin{equation}
\left\Vert \nabla u_{p}\right\Vert _{p}^{p}=\Lambda_{p}(\Omega)=\min\left\{
\left\Vert \nabla u\right\Vert _{p}^{p}:u\in W_{0}^{1,p}(\Omega)\text{ \ and
\ }\left\Vert u\right\Vert _{\infty}=1\right\}  . \label{min}%
\end{equation}

However, our main result in Section \ref{part1} is the complete
characterization of the minimizers in (\ref{min}), which we call extremal
functions of $\Lambda_{p}(\Omega)$ and denote by $u_{p}.$ More precisely, we
prove that if $u_{p}\in W_{0}^{1,p}(\Omega)$ is such that%
\[
\left\Vert u_{p}\right\Vert _{\infty}=1\text{ \ and \ }\left\Vert \nabla
u_{p}\right\Vert _{p}^{p}=\Lambda_{p}(\Omega)
\]
then $u_{p}$ does not change sign in $\Omega,$ attains its sup norm at a
unique point $x_{p}$ and satisfies the equation%
\[
-\Delta_{p}u_{p}=u_{p}(x_{p})\Lambda_{p}(\Omega)\delta_{x_{p}}%
\]
where $\delta_{x_{p}}$ is the Dirac delta distribution concentrated at
$x_{p}.$

In the particular case where $\Omega=B_{R},$ a ball of radius $R,$ we show
that%
\begin{equation}
\Lambda_{p}(B_{R})=\frac{N\omega_{N}}{R^{p-N}}\left(  \frac{p-N}{p-1}\right)
^{p-1} \label{res1}%
\end{equation}
and that
\begin{equation}
\lim_{q\rightarrow\infty}w_{q}(\left\vert x\right\vert )=u_{p}(\left\vert
x\right\vert ):=1-\left(  \frac{\left\vert x\right\vert }{R}\right)
^{\frac{p-N}{p-1}};\text{ \ }0\leq\left\vert x\right\vert \leq R, \label{res2}%
\end{equation}
where $w_{q}(\left\vert \cdot\right\vert )$ is the positive extremal function
of $\lambda_{q}(B_{R}).$ Moreover, we prove that the function $u_{p}$ defined
in (\ref{res2}) is the unique minimizer of $\Lambda_{p}(B_{R}).$ Since
$x_{p}=0$, our main result in Section \ref{part1} implies that%
\[
-\Delta_{p}u_{p}=\Lambda_{p}(B_{R})\delta_{0}.
\]

It is convenient to recall the following consequence of Theorem 2.E of
\cite{Talenti1}, due to Talenti:
\begin{equation}
N(\omega_{N})^{\frac{p}{N}}\left(  \frac{p-N}{p-1}\right)  ^{p-1}\left\vert
\Omega\right\vert ^{1-\frac{p}{N}}\left\Vert u\right\Vert _{\infty}^{p}%
\leq\left\Vert \nabla u\right\Vert _{p}^{p},\text{ \ for all }u\in W_{0}%
^{1,p}(\Omega). \label{talenti}%
\end{equation}
We emphasize that, in view of (\ref{inf}), this inequality allows one to
conclude that
\begin{equation}
N(\omega_{N})^{\frac{p}{N}}\left(  \frac{p-N}{p-1}\right)  ^{p-1}\left\vert
\Omega\right\vert ^{1-\frac{p}{N}}\leq\Lambda_{p}(\Omega).
\label{lowerboundbc}%
\end{equation}

Note that when $\Omega=B_{R}$ the left-hand side of (\ref{lowerboundbc})
coincides with the right-hand side of (\ref{res1}). Thus, equality in
(\ref{talenti}) holds when $\Omega$ is a ball and $u$ is a scalar multiple of
the function defined in (\ref{res2}), as pointed out in \cite{Talenti1}. In
this paper we show that if $\Omega$ is not a ball, then the inequality in
(\ref{talenti}) has to be strict. This fact was not observed in
\cite{Talenti1}.

We remark that (\ref{res1}) provides the following upper bound to $\Lambda
_{p}(\Omega):$
\[
\Lambda_{p}(\Omega)\leq N(\omega_{N})^{\frac{p}{N}}\left(  \frac{p-N}%
{p-1}\right)  ^{p-1}\left\vert B_{R_{\Omega}}\right\vert ^{1-\frac{p}{N}},
\]
where $R_{\Omega}$ denotes the inradius of $\Omega,$ that is, the radius of
the largest ball inscribed in $\Omega.$ We use the bounds (\ref{lowerboundbc})
and (\ref{res1}) and the explicit expression of $S_{p}$ in (\ref{SNp}) to
conclude that the function $p\mapsto\Lambda_{p}(\Omega)$ is continuous at
$p=N.$

In the second part, developed in Section \ref{part2}, we study the asymptotic
behavior, as $p\rightarrow\infty,$ of the pair $(\Lambda_{p}(\Omega),u_{p}),$
where $u_{p}\in W_{0}^{1,p}(\Omega)$ will denote a positive extremal function
of $\Lambda_{p}(\Omega).$ First we prove that
\[
\lim_{p\rightarrow\infty}\Lambda_{p}(\Omega)^{\frac{1}{p}}=\frac{1}{\left\Vert
\rho\right\Vert _{\infty}},
\]
where
\[
\rho(x):=\inf_{y\in\partial\Omega}\left\vert y-x\right\vert ,\text{ \ }%
x\in\Omega,
\]
is the distance function to the boundary. We recall the well-known fact:
\begin{equation}
\frac{1}{\left\Vert \rho\right\Vert _{\infty}}=\min\left\{  \frac{\left\Vert
\nabla\phi\right\Vert _{\infty}}{\left\Vert \phi\right\Vert _{\infty}}:\phi\in
W_{0}^{1,\infty}(\Omega)\backslash\left\{  0\right\}  \right\}  .
\label{minRay}%
\end{equation}

Then, we prove that there exist a sequence $p_{n}\rightarrow\infty,$ a point
$x_{\ast}\in\Omega$ and a function $u_{\infty}\in W_{0}^{1,\infty}(\Omega)\cap
C(\overline{\Omega})$ such that: $x_{p_{n}}\rightarrow x_{\ast},$ $\left\Vert
\rho\right\Vert _{\infty}=\rho(x_{\ast}),$ $u_{\infty}\leq\frac{\rho
}{\left\Vert \rho\right\Vert _{\infty}}$ and $u_{p_{n}}\rightarrow u_{\infty
},$ uniformly in $\overline{\Omega}$ and strongly in $W_{0}^{1,r}(\Omega)$ for
all $r>N.$ Moreover, $x_{\ast}$ is the unique maximum point of $u_{\infty},$
this function is also a minimizer of (\ref{minRay}) and satisfies%
\[
\left\{
\begin{array}
[c]{ll}%
\Delta_{\infty}u_{\infty}=0 & \text{in }\Omega\backslash\left\{  x_{\ast
}\right\} \\
u_{\infty}=\frac{\rho}{\left\Vert \rho\right\Vert _{\infty}} & \text{on
}\partial\left(  \Omega\backslash\left\{  x_{\ast}\right\}  \right)  =\left\{
x_{\ast}\right\}  \cup\partial\Omega
\end{array}
\right.
\]
in the viscosity sense, where $\Delta_{\infty}$ denotes the well-known
$\infty$-Laplacian operator (see \cite{BDM, Cr, Lq}), defined formally by
\[
\Delta_{\infty}u:=\sum_{i,j=1}^{N}\frac{\partial u}{\partial x_{i}}%
\frac{\partial u}{\partial x_{j}}\frac{\partial^{2}u}{\partial x_{i}\partial
x_{j}}.
\]

Still in Section \ref{part2} \ we characterize the domains $\Omega$ for which
$u_{\infty}=\frac{\rho}{\left\Vert \rho\right\Vert _{\infty}}$\ in
$\overline{\Omega}$ and show that each maximum point of the distance function
$\rho$ gives rise to a minimizer of (\ref{minRay}). We then use this latter
fact to conclude that if $\Omega$ is an annulus, then there exist infinitely
many positive and nonradial minimizers of (\ref{minRay}).

\section{$\Lambda_{p}(\Omega)$ and its extremal functions\label{part1}}

In this section, $p>N\geq2$ and $\Omega$ denotes a bounded and smooth domain
of $\mathbb{R}^{N}.$ We recall the well-known Morrey's inequality
\[
\left\Vert u\right\Vert _{C^{0,\gamma}(\overline{\Omega})}\leq C\left\Vert
\nabla u\right\Vert _{L^{p}(\Omega)},\text{ \ for all }u\in W^{1,p}(\Omega),
\]
where $\gamma:=1-\frac{N}{p}$ and $C$ depends only on $\Omega,$ $p$ and $N.$
This inequality implies immediately that the immersion $W_{0}^{1,p}%
(\Omega)\hookrightarrow C(\overline{\Omega})$ is compact.

Let us also recall that%
\[
\Lambda_{p}(\Omega):=\lim_{q\rightarrow\infty}\lambda_{q}(\Omega)
\]
where $\lambda_{q}(\Omega)$ is defined in (\ref{lambdq}).

\begin{theorem}
\label{teomain}There holds%
\begin{equation}
\Lambda_{p}(\Omega)=\inf\left\{  \left\Vert \nabla u\right\Vert _{p}^{p}:u\in
W_{0}^{1,p}(\Omega)\text{ \ and \ }\left\Vert u\right\Vert _{\infty
}=1\right\}  . \label{limit}%
\end{equation}

\end{theorem}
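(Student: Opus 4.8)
The plan is to prove the two inequalities separately, after rewriting each infimum in homogeneous (Rayleigh-quotient) form. Since both $u\mapsto\|\nabla u\|_p^p$ and $u\mapsto\|u\|_s^p$ are positively $p$-homogeneous, the normalizing constraint may be dropped, giving $\lambda_q(\Omega)=\inf_{u\neq0}\|\nabla u\|_p^p/\|u\|_q^p$ and, denoting the right-hand side of (\ref{limit}) by $\Lambda_\infty$, also $\Lambda_\infty=\inf_{u\neq0}\|\nabla u\|_p^p/\|u\|_\infty^p$. I would note first that, because $p>N$, Morrey's inequality makes every $u\in W_0^{1,p}(\Omega)$ continuous on $\overline{\Omega}$, so $\|u\|_\infty<\infty$ and both quotients are well defined.

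For the inequality $\Lambda_\infty\le\Lambda_p(\Omega)$, I would exploit $\|u\|_q\le|\Omega|^{1/q}\|u\|_\infty$, which is H\"older's inequality on a finite-measure set. Applied to a positive minimizer $w_q$ of $\lambda_q(\Omega)$, for which $\|w_q\|_q=1$, this gives $\|w_q\|_\infty\ge|\Omega|^{-1/q}$, so the rescaled function $w_q/\|w_q\|_\infty$ is admissible for $\Lambda_\infty$ and yields $\Lambda_\infty\le\|\nabla w_q\|_p^p/\|w_q\|_\infty^p\le|\Omega|^{p/q}\lambda_q(\Omega)$. Letting $q\to\infty$ and using $|\Omega|^{p/q}\to1$ together with the existence of the limit $\Lambda_p(\Omega)$, guaranteed by the monotonicity recorded in (\ref{monot}), gives $\Lambda_\infty\le\Lambda_p(\Omega)$.

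For the reverse inequality $\Lambda_p(\Omega)\le\Lambda_\infty$, I would fix an arbitrary nonzero $u\in W_0^{1,p}(\Omega)$, normalized so that $\|u\|_\infty=1$. Since $u$ is continuous on $\overline{\Omega}$ and $\Omega$ has finite measure, $\|u\|_q\to\|u\|_\infty=1$ as $q\to\infty$. Testing the definition of $\lambda_q(\Omega)$ with $u/\|u\|_q$ gives $\lambda_q(\Omega)\le\|\nabla u\|_p^p/\|u\|_q^p$, and passing to the limit yields $\Lambda_p(\Omega)\le\|\nabla u\|_p^p$. Taking the infimum over all such $u$ produces $\Lambda_p(\Omega)\le\Lambda_\infty$, and combining the two inequalities finishes the proof.

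The argument is essentially elementary, and I do not expect a genuine obstacle. The only point needing care is the convergence $\|u\|_q\to\|u\|_\infty$ for continuous $u$ on a finite-measure domain, i.e. the classical fact that the $L^q$ norms tend to the $L^\infty$ norm up to the vanishing factor $|\Omega|^{1/q}$, together with the bookkeeping of the normalizing constants so that the two infima are compared over the same class of competitors.
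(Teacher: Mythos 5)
Your proof is correct and takes essentially the same route as the paper: both directions use the same competitors ($u/\left\Vert u\right\Vert _{q}$ for one inequality, $w_{q}/\left\Vert w_{q}\right\Vert _{\infty}$ for the other), the fact that $\left\Vert u\right\Vert _{q}\rightarrow\left\Vert u\right\Vert _{\infty}$, and the H\"{o}lder bound $\left\Vert w_{q}\right\Vert _{\infty}\geq\left\vert \Omega\right\vert ^{-1/q}$. The only (cosmetic) difference is that the paper establishes $\left\Vert w_{q}\right\Vert _{\infty}\rightarrow1$ outright, using the already-proved inequality $\Lambda_{p}(\Omega)\leq\mu$ to get the upper bound on $\left\Vert w_{q}\right\Vert _{\infty}$, whereas you keep the factor $\left\vert \Omega\right\vert ^{p/q}$ and let it tend to $1$, which is marginally more economical.
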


\begin{proof}
Let
\[
\mu:=\inf\left\{  \left\Vert \nabla u\right\Vert _{p}^{p}:u\in W_{0}%
^{1,p}(\Omega)\text{ \ and \ }\left\Vert u\right\Vert _{\infty}=1\right\}  .
\]

Let us take $u\in W_{0}^{1,p}(\Omega)$\ such that $\left\Vert u\right\Vert
_{\infty}=1.$ Since $\lim_{q\rightarrow\infty}\left\Vert u\right\Vert
_{q}=\left\Vert u\right\Vert _{\infty}=1$ we have%
\[
\Lambda_{p}(\Omega)=\lim_{q\rightarrow\infty}\lambda_{q}(\Omega)\leq
\lim_{q\rightarrow\infty}\frac{\left\Vert \nabla u\right\Vert _{p}^{p}%
}{\left\Vert u\right\Vert _{q}^{p}}=\left\Vert \nabla u\right\Vert _{p}^{p},
\]
implying that $\Lambda_{p}(\Omega)\leq\mu.$

Now, for each $q\geq1$ let $w_{q}$ be a positive extremal function of
$\lambda_{q}(\Omega).$ Since
\[
\mu\leq\left\Vert \nabla(w_{q}/\left\Vert w_{q}\right\Vert _{\infty
})\right\Vert _{p}^{p}=\frac{\lambda_{q}(\Omega)}{\left\Vert w_{q}\right\Vert
_{\infty}^{p}},
\]
in order to verify that $\mu\leq\Lambda_{p}(\Omega)$ we need only check that
\begin{equation}
\lim_{q\rightarrow\infty}\left\Vert w_{q}\right\Vert _{\infty}=1.
\label{norm1}%
\end{equation}
Since $1=\left\Vert w_{q}\right\Vert _{q}\leq\left\vert \Omega\right\vert
^{\frac{1}{q}}\left\Vert w_{q}\right\Vert _{\infty},$ $\left\Vert \nabla
w_{q}\right\Vert _{p}^{p}=\lambda_{q}(\Omega)$ and $\Lambda_{p}(\Omega)\leq
\mu$ we have%
\[
\left\vert \Omega\right\vert ^{-\frac{p}{q}}\leq\left\Vert w_{q}\right\Vert
_{\infty}^{p}\leq\frac{\left\Vert \nabla w_{q}\right\Vert _{p}^{p}}{\mu}%
\leq\frac{\lambda_{q}(\Omega)}{\Lambda_{p}(\Omega)},
\]
which leads to (\ref{norm1}), after making $q\rightarrow\infty.$
\end{proof}

Taking into account (\ref{limit}), we make the following definition:

\begin{definition}
We say that $v\in W_{0}^{1,p}(\Omega)$ is an extremal function of $\Lambda
_{p}(\Omega)$ iff
\[
\left\Vert \nabla v\right\Vert _{p}^{p}=\Lambda_{p}(\Omega)\text{ \ and
\ }\left\Vert v\right\Vert _{\infty}=1.
\]

\end{definition}

In the sequel we show that an extremal function of $\Lambda_{p}(\Omega)$ can
be obtained as the limit of $w_{q_{n}}$ for some $q_{n}\rightarrow\infty,$
where $w_{q_{n}}$ denotes the extremal function of $\lambda_{q_{n}}(\Omega).$

\begin{theorem}
\label{teomain4}There exists $q_{n}\rightarrow\infty$ and a nonnegative
function $w\in W_{0}^{1,p}(\Omega)\cap C(\overline{\Omega})$ such that
$w_{q_{n}}\rightarrow w$ strongly in $C(\overline{\Omega})$ and also in
$W_{0}^{1,p}(\Omega).$ Moreover, $w$ is an extremal function of $\Lambda
_{p}(\Omega).$
\end{theorem}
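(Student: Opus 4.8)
The plan is to produce the extremal function as a limit of the minimizers $w_q$ along a suitable subsequence $q_n\to\infty$, exploiting the compactness supplied by Morrey's inequality together with weak lower semicontinuity of the gradient norm. The key quantitative inputs are already in hand: $\lambda_q(\Omega)\to\Lambda_p(\Omega)$ by definition, and the relation $\lim_{q\to\infty}\left\Vert w_q\right\Vert _\infty=1$ established in (\ref{norm1}).

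First I would observe that $\{w_q\}$ is bounded in $W_0^{1,p}(\Omega)$: since $\left\Vert \nabla w_q\right\Vert _p^p=\lambda_q(\Omega)$ and $\lambda_q(\Omega)\to\Lambda_p(\Omega)$, the gradient norms stay bounded as $q\to\infty$. By reflexivity of $W_0^{1,p}(\Omega)$ I can extract $q_n\to\infty$ with $w_{q_n}\rightharpoonup w$ weakly in $W_0^{1,p}(\Omega)$; passing to a further subsequence and invoking the compact embedding $W_0^{1,p}(\Omega)\hookrightarrow C(\overline{\Omega})$, I obtain $w_{q_n}\to w$ uniformly on $\overline{\Omega}$. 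The limit $w$ is nonnegative, being a uniform limit of the positive functions $w_{q_n}$, and $\left\Vert w\right\Vert _\infty=\lim_n\left\Vert w_{q_n}\right\Vert _\infty=1$ by (\ref{norm1}).

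Next I would identify $w$ as an extremal function. Weak lower semicontinuity of $u\mapsto\left\Vert \nabla u\right\Vert _p^p$ gives $\left\Vert \nabla w\right\Vert _p^p\leq\liminf_n\left\Vert \nabla w_{q_n}\right\Vert _p^p=\liminf_n\lambda_{q_n}(\Omega)=\Lambda_p(\Omega)$. On the other hand, since $\left\Vert w\right\Vert _\infty=1$, the function $w$ is admissible in the infimum (\ref{limit}) of Theorem \ref{teomain}, so $\left\Vert \nabla w\right\Vert _p^p\geq\Lambda_p(\Omega)$. Combining the two inequalities yields $\left\Vert \nabla w\right\Vert _p^p=\Lambda_p(\Omega)$, which is precisely the assertion that $w$ is an extremal function of $\Lambda_p(\Omega)$.

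Finally, I would upgrade the weak convergence in $W_0^{1,p}(\Omega)$ to strong convergence. The chain of equalities just obtained shows $\left\Vert \nabla w_{q_n}\right\Vert _p\to\left\Vert \nabla w\right\Vert _p$, while $\nabla w_{q_n}\rightharpoonup\nabla w$ weakly in $L^p(\Omega)$. Since $L^p(\Omega)$ is uniformly convex for $1<p<\infty$, the combination of weak convergence with convergence of norms forces $\nabla w_{q_n}\to\nabla w$ strongly in $L^p(\Omega)$, and hence $w_{q_n}\to w$ strongly in $W_0^{1,p}(\Omega)$. I expect this last passage, the conversion of weak to strong convergence via uniform convexity (the Radon--Riesz property), to be the only genuinely delicate point; every other step is a routine application of the compact Morrey embedding, reflexivity, and the variational characterization of $\Lambda_p(\Omega)$ furnished by Theorem \ref{teomain}.
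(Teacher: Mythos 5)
Your argument is correct and follows essentially the same route as the paper: extract a subsequence converging weakly in $W_{0}^{1,p}(\Omega)$ and uniformly via the compact Morrey embedding, use (\ref{norm1}) to get $\left\Vert w\right\Vert _{\infty}=1$, squeeze $\left\Vert \nabla w\right\Vert _{p}^{p}$ between $\Lambda_{p}(\Omega)$ and $\liminf\lambda_{q_{n}}(\Omega)$, and upgrade to strong convergence from norm convergence plus weak convergence. The only difference is that you spell out the Radon--Riesz/uniform convexity step that the paper leaves implicit.
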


\begin{proof}
Since $w_{q}$ is uniformly bounded in $W_{0}^{1,p}(\Omega)$ and also in
$C^{0,1-\frac{N}{p}}(\overline{\Omega})$ there exist $q_{n}\rightarrow\infty$
and a nonnegative function $w\in W_{0}^{1,p}(\Omega)\cap C(\overline{\Omega})$
such that $w_{q_{n}}\rightarrow w$ weakly in $W_{0}^{1,p}(\Omega)$ and
strongly in $C(\overline{\Omega}).$ Thus, $\left\Vert w\right\Vert _{\infty
}=\lim\left\Vert w_{q_{n}}\right\Vert _{\infty}=1$ (because of (\ref{norm1}))
and hence%
\[
\Lambda_{p}(\Omega)\leq\left\Vert \nabla w\right\Vert _{p}^{p}\leq
\liminf\left\Vert \nabla w_{q_{n}}\right\Vert _{p}^{p}=\lim\lambda_{q_{n}%
}(\Omega)=\Lambda_{p}(\Omega).
\]
This implies that $\Lambda_{p}(\Omega)=\lim\left\Vert \nabla w_{q_{n}%
}\right\Vert _{p}^{p}=\left\Vert \nabla w\right\Vert _{p},$ so that $w_{q_{n}%
}\rightarrow w$ strongly in $W_{0}^{1,p}(\Omega)$ and also that $w$ is an
extremal function of $\Lambda_{p}(\Omega).$
\end{proof}

\begin{remark}
As we will see in the sequel, any nonnegative extremal function of
$\Lambda_{p}(\Omega)$ must be strictly positive in $\Omega.$
\end{remark}

We recall a well-known fact: $(-\Delta_{p})^{-1}:W^{-1,p^{\prime}}%
(\Omega)\mapsto W_{0}^{1,p}(\Omega)$ is bijective. Thus, if $p>N$ the
equation
\begin{equation}
-\Delta_{p}u=c\delta_{y} \label{eqfunc}%
\end{equation}
has a unique solution $u\in W_{0}^{1,p}(\Omega)$ for each fixed $y\in\Omega$
and $c\in\mathbb{R}.$ Note that if $p>N$ then $\delta_{y}\in W_{0}%
^{-1,p^{\prime}}(\Omega),$ since
\[
\left\vert \delta_{y}(\phi)\right\vert =\left\vert \phi(y)\right\vert
\leq\left\Vert \phi\right\Vert _{\infty}\leq\Lambda_{p}(\Omega)^{-\frac{1}{p}%
}\left\Vert \nabla\phi\right\Vert _{p},\text{ \ for all }\phi\in W_{0}%
^{1,p}(\Omega).
\]

The equation in (\ref{eqfunc}) is to be interpreted in sense of the
distributions:%
\[
\int_{\Omega}\left\vert \nabla u\right\vert ^{p-2}\nabla u\cdot\nabla
\phi\mathrm{d}x=c\phi(y),\text{ for all }\phi\in W_{0}^{1,p}(\Omega).
\]

\begin{theorem}
\label{teomain3}Let $u_{p}\in W_{0}^{1,p}(\Omega)$ be an extremal function of
$\Lambda_{p}(\Omega)$ and let $x_{p}\in\Omega$ be such that%
\[
\left\vert u_{p}(x_{p})\right\vert =\left\Vert u_{p}\right\Vert _{\infty}=1.
\]
We claim that
\end{theorem}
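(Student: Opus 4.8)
The plan is to recognize $u_{p}$ as the minimizer of a gradient energy subject to a single pointwise constraint, to extract the corresponding Euler--Lagrange equation, and then to read off both the sign of $u_{p}$ and the uniqueness of its extremal point from the maximum principle. As a preliminary normalization I would replace $u_{p}$ by $-u_{p}$ if necessary so that $u_{p}(x_{p})=1$; this alters neither $\left\Vert \nabla u_{p}\right\Vert _{p}$ nor $\left\Vert u_{p}\right\Vert _{\infty}$. The key reduction is that $u_{p}$ minimizes the $p$-energy among all functions pinned at $x_{p}$:
\[
\left\Vert \nabla u_{p}\right\Vert _{p}^{p}=\min\left\{  \left\Vert \nabla u\right\Vert _{p}^{p}:u\in W_{0}^{1,p}(\Omega)\text{ and }u(x_{p})=1\right\}  .
\]
Indeed, any competitor $u$ with $u(x_{p})=1$ has $\left\Vert u\right\Vert _{\infty}\geq1$, so $u/\left\Vert u\right\Vert _{\infty}$ has unit sup norm and Theorem~\ref{teomain} gives $\left\Vert \nabla u\right\Vert _{p}^{p}\geq\left\Vert u\right\Vert _{\infty}^{p}\Lambda_{p}(\Omega)\geq\Lambda_{p}(\Omega)=\left\Vert \nabla u_{p}\right\Vert _{p}^{p}$, while $u_{p}$ itself is admissible.

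Next I would derive the Euler--Lagrange equation. Perturbing by any $\phi\in W_{0}^{1,p}(\Omega)$ with $\phi(x_{p})=0$ preserves the constraint, so $t\mapsto\left\Vert \nabla(u_{p}+t\phi)\right\Vert _{p}^{p}$ is minimized at $t=0$; differentiating yields $\int_{\Omega}\left\vert \nabla u_{p}\right\vert ^{p-2}\nabla u_{p}\cdot\nabla\phi\,\mathrm{d}x=0$ for every such $\phi$. Hence the functional $-\Delta_{p}u_{p}\in W^{-1,p^{\prime}}(\Omega)$ annihilates the closed hyperplane $\ker\delta_{x_{p}}=\{\phi:\phi(x_{p})=0\}$, so it is a scalar multiple of $\delta_{x_{p}}$, say $-\Delta_{p}u_{p}=\lambda\delta_{x_{p}}$. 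Testing this identity against $\phi=u_{p}$ gives $\lambda\,u_{p}(x_{p})=\left\Vert \nabla u_{p}\right\Vert _{p}^{p}=\Lambda_{p}(\Omega)$; since $u_{p}(x_{p})^{2}=1$, we obtain $\lambda=\Lambda_{p}(\Omega)u_{p}(x_{p})$, which is exactly the claimed equation $-\Delta_{p}u_{p}=u_{p}(x_{p})\Lambda_{p}(\Omega)\delta_{x_{p}}$.

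Finally I would extract the qualitative statements. Under the normalization $u_{p}(x_{p})=1$ the right-hand side is the nonnegative measure $\Lambda_{p}(\Omega)\delta_{x_{p}}$, so $u_{p}$ is $p$-harmonic in $\Omega\setminus\{x_{p}\}$ and $p$-superharmonic in $\Omega$. Since $u_{p}=0$ on $\partial\Omega$ and $u_{p}(x_{p})=1$, the minimum principle on $\Omega\setminus\{x_{p}\}$ forces $u_{p}\geq0$, and the strong maximum principle (\cite{Vazquez}) upgrades this to $u_{p}>0$ in $\Omega$; undoing the normalization shows a general extremal function does not change sign. For uniqueness of the extremal point, suppose some $x_{p}^{\prime}\neq x_{p}$ also satisfies $\left\vert u_{p}(x_{p}^{\prime})\right\vert =1$. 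The same derivation applied at $x_{p}^{\prime}$ yields $-\Delta_{p}u_{p}=\Lambda_{p}(\Omega)u_{p}(x_{p}^{\prime})\delta_{x_{p}^{\prime}}$; comparing the two representations of the single distribution $-\Delta_{p}u_{p}$ forces $\delta_{x_{p}}=\delta_{x_{p}^{\prime}}$, which is impossible for distinct points. Thus $x_{p}$ is the unique point at which $\left\vert u_{p}\right\vert$ attains its supremum.

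I expect the main obstacle to be the Euler--Lagrange step: one must justify that the pointwise constraint is a genuine, nondegenerate linear constraint on $W_{0}^{1,p}(\Omega)$ --- which rests on $\delta_{x_{p}}\in W^{-1,p^{\prime}}(\Omega)$ for $p>N$, already observed in the excerpt --- and that the annihilator of $\ker\delta_{x_{p}}$ is precisely $\mathbb{R}\delta_{x_{p}}$, so that no spurious regular part survives in $-\Delta_{p}u_{p}$. Once this concentrated equation is established, the sign property and the uniqueness of $x_{p}$ follow routinely from the maximum principle for the $p$-Laplacian.
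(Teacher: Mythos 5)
Your argument is correct for claims (i)--(iii), but it reaches (i) by a genuinely different route than the paper. You treat $u_{p}$ as a constrained minimizer of the $p$-energy on the affine hyperplane $\{u:u(x_{p})=1\}$ (justifying admissibility via Theorem \ref{teomain}), derive the Euler--Lagrange identity by perturbing along $\ker\delta_{x_{p}}$, and then use the elementary fact that a functional vanishing on $\ker\delta_{x_{p}}$ is a scalar multiple of $\delta_{x_{p}}$; the multiplier is identified by testing against $u_{p}$. This is a clean Lagrange-multiplier argument, and the one analytic point you flag --- that $\delta_{x_{p}}\in W^{-1,p'}(\Omega)$ for $p>N$, so the constraint is a closed hyperplane and the differentiation under the integral sign is legitimate for $p>N\geq2$ --- is exactly the right thing to check. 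The paper instead starts from the auxiliary function $v$ solving $-\Delta_{p}v=\Lambda_{p}(\Omega)\delta_{x_{p}}$, shows via a chain of H\"older inequalities that $v(x_{p})=\Vert v\Vert_{\infty}=1$ and that $v$ is itself extremal, and then uses the equality case in H\"older to force $\nabla v=\nabla u_{p}$ a.e., hence $v=u_{p}$. Your route is arguably shorter and more transparent for the theorem itself; the paper's route has the side benefit of showing directly that the solution of the Dirac-source problem is an extremal function with $v(x_{p})=\Vert v\Vert_{\infty}$, a fact reused later (in Proposition \ref{distmaxext}). Your treatments of the sign (nonnegativity from the comparison principle because the right-hand side is a nonnegative measure, then strict positivity from the strong maximum principle) and of the uniqueness of the maximum point (two distinct Dirac representations of one distribution) coincide in substance with the paper's.

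One omission: the full statement also contains claim (iv), the $C^{1,\alpha_{t}}(\overline{E_{t}})$ regularity on the level sets $E_{t}=\{0<|u_{p}|<t\}$, which you do not address. The paper obtains it by observing that $u_{p}$ is $p$-harmonic in $E_{t}$ (since $E_{t}$ avoids $x_{p}$) with locally constant boundary values, and then invoking Lieberman's boundary regularity theorem on each connected component. You would need to add this step to complete the proof.
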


\begin{enumerate}
\item[(i)] $-\Delta_{p}u_{p}=u_{p}(x_{p})\Lambda_{p}(\Omega)\delta_{x_{p}}$
\ in \ $\Omega,$

\item[(ii)] $x_{p}$ is the unique global maximum point of $\left\vert
u_{p}\right\vert ,$

\item[(iii)] $u_{p}$ does not change sign in $\Omega,$ and

\item[(iv)] for each $0<t<1,$ there exists $\alpha_{t}\in(0,1)$ such that
$u_{p}\in C^{1,\alpha_{t}}(\overline{E_{t}}),$ where $E_{t}=\left\{
x\in\Omega:0<\left\vert u_{p}(x)\right\vert <t\right\}  .$
\end{enumerate}

\begin{proof}
For the sake of simplicity, we will assume throughout this proof that
$u_{p}(x_{p})=1$ (otherwise, if $u_{p}(x_{p})=-1,$ we replace $u_{p}$ by
$-u_{p}.$)

Let $v\in W_{0}^{1,p}(\Omega)$ be such that
\[
-\Delta_{p}v=\Lambda_{p}(\Omega)\delta_{x_{p}}\text{ \ in \ }\Omega.
\]

Since $u_{p}(x_{p})=1,$%
\begin{equation}
\Lambda_{p}(\Omega)=\int_{\Omega}\left\vert \nabla v\right\vert ^{p-2}\nabla
v\cdot\nabla u_{p}\mathrm{d}x\leq\int_{\Omega}\left\vert \nabla v\right\vert
^{p-1}\left\vert \nabla u_{p}\right\vert \mathrm{d}x. \label{aux18}%
\end{equation}
Hence, since $\Lambda_{p}(\Omega)=\left\Vert \nabla u_{p}\right\Vert _{p}^{p}$
and
\begin{equation}
\left\Vert \nabla v\right\Vert _{p}^{p}=\int_{\Omega}\left\vert \nabla
v\right\vert ^{p-2}\nabla v\cdot\nabla v\mathrm{d}x=\Lambda_{p}(\Omega
)v(x_{p}) \label{aux19}%
\end{equation}
we apply H\"{o}lder inequality to (\ref{aux18}) in order to get
\begin{equation}
\int_{\Omega}\left\vert \nabla v\right\vert ^{p-1}\left\vert \nabla
u_{p}\right\vert \mathrm{d}x\leq\left\Vert \nabla v\right\Vert _{p}%
^{p-1}\left\Vert \nabla u_{p}\right\Vert _{p}=\left(  \Lambda_{p}%
(\Omega)v(x_{p})\right)  ^{\frac{p-1}{p}}\Lambda_{p}(\Omega)^{\frac{1}{p}%
}=\Lambda_{p}(\Omega)\left(  v(x_{p})\right)  ^{\frac{p-1}{p}}. \label{aux12}%
\end{equation}

It follows from (\ref{aux18}) and (\ref{aux12}) that $1\leq v(x_{p}%
)\leq\left\Vert v\right\Vert _{\infty}.$

On the other hand, (\ref{limit}) and (\ref{aux19}) yield%
\begin{equation}
\Lambda_{p}(\Omega)\leq\frac{\left\Vert \nabla v\right\Vert _{p}^{p}%
}{\left\Vert v\right\Vert _{\infty}^{p}}=\frac{\Lambda_{p}(\Omega)v(x_{p}%
)}{\left\Vert v\right\Vert _{\infty}^{p}}\leq\frac{\Lambda_{p}(\Omega
)}{\left\Vert v\right\Vert _{\infty}^{p-1}}. \label{aux11}%
\end{equation}
Hence, $v(x_{p})\leq\left\Vert v\right\Vert _{\infty}\leq1$ and then we
conclude that
\begin{equation}
1=v(x_{p})=\left\Vert v\right\Vert _{\infty}. \label{aux13}%
\end{equation}

Combining (\ref{aux13}) with (\ref{aux11}) we obtain
\[
\Lambda_{p}(\Omega)=\left\Vert \nabla v\right\Vert _{p}^{p},
\]
showing that $v$ is an extremal function of $\Lambda_{p}(\Omega).$

In order to prove that $u_{p}=v$ we combine (\ref{aux13}) with (\ref{aux12})
and (\ref{aux18}) to get
\begin{equation}
\Lambda_{p}(\Omega)=\int_{\Omega}\left\vert \nabla v\right\vert ^{p-2}\nabla
v\cdot\nabla u_{p}\mathrm{d}x=\int_{\Omega}\left\vert \nabla v\right\vert
^{p-1}\left\vert \nabla u_{p}\right\vert \mathrm{d}x=\left\Vert \nabla
v\right\Vert _{p}^{p-1}\left\Vert \nabla u_{p}\right\Vert _{p}. \label{aux15}%
\end{equation}
The third equality in (\ref{aux15}) is exactly the case of an equality in the
H\"{o}lder inequality. It means that%
\begin{equation}
\left\vert \nabla v\right\vert =\left\vert \nabla u_{p}\right\vert \text{
\ \ a. e. in }\Omega. \label{aux14}%
\end{equation}
(Note that $\left\Vert \nabla v\right\Vert _{p}=\left\Vert \nabla
u_{p}\right\Vert _{p}.$)

We still obtain from (\ref{aux15}) that%
\[
0=\int_{\Omega}\left\vert \nabla v\right\vert ^{p-2}\left(  \left\vert \nabla
v\right\vert \left\vert \nabla u_{p}\right\vert -\nabla v\cdot\nabla
u_{p}\right)  \mathrm{d}x.
\]
Since $\left\vert \nabla v\right\vert \left\vert \nabla u_{p}\right\vert
\geq\nabla v\cdot\nabla u_{p}$ this yields%
\begin{equation}
\nabla v\cdot\nabla u_{p}=\left\vert \nabla v\right\vert \left\vert \nabla
u_{p}\right\vert \text{ \ \ a. e. in }\Omega. \label{aux16}%
\end{equation}
Note that this equality occurs even at the points where $\left\vert \nabla
v\right\vert ^{p-2}=0.$

It follows from (\ref{aux16}) and (\ref{aux14}) that
\[
\nabla v=\nabla u_{p}\text{ \ \ a. e. in }\Omega,
\]
implying that $\left\Vert \nabla(v-u_{p})\right\Vert _{p}=0.$ Since both $v$
and $u_{p}$ belong to $W_{0}^{1,p}(\Omega)$ we conclude that
\[
v=u_{p}\text{ \ \ a. e. in }\Omega
\]
so that $-\Delta_{p}u_{p}=\Lambda_{p}(\Omega)\delta_{x_{p}}.$ Thus, the proof
of (i) is complete.

The claim (ii) follows directly from (i). In fact, another global maximum
point, say $x_{1},$ would lead to the following absurd: $\Lambda_{p}%
(\Omega)\delta_{x_{p}}=-\Delta_{p}u_{p}=\Lambda_{p}(\Omega)\delta_{x_{1}}.$

Let us prove (iii). First we observe that $u_{p}\geq0$ in $\Omega.$ This is a
consequence of the weak comparison principle since%
\[
\int_{\Omega}\left\vert \nabla u_{p}\right\vert ^{p-2}\nabla u_{p}\cdot
\nabla\phi\mathrm{d}x=\Lambda_{p}(\Omega)\phi(x_{p})\geq0,\text{ for all }%
\phi\in W_{0}^{1,p}(\Omega),\text{ \ }\phi\geq0.
\]

Now, we argue that $u_{p}$ is $p$-harmonic in $\Omega\backslash\{x_{p}\}$.
Indeed, for each ball $B\subset\Omega\backslash\{x_{p}\}$ and each $\phi\in
W_{0}^{1,p}(B)\subset W_{0}^{1,p}(\Omega)$ (here we are considering $\phi=0$
in $\Omega\backslash B$) we have%
\[
\int_{B}\left\vert \nabla u_{p}\right\vert ^{p-2}\nabla u_{p}\cdot\nabla
\phi\mathrm{d}x=\int_{\Omega}\left\vert \nabla u_{p}\right\vert ^{p-2}\nabla
u_{p}\cdot\nabla\phi\mathrm{d}x=\Lambda_{p}(\Omega)\phi(x_{p})=0,
\]
implying that $u_{p}$ is $p$-harmonic in $B.$

Let us consider the following subset $Z:=\left\{  x\in\Omega:u_{p}%
(x)=0\right\}  .$ Of course, $Z$ is closed in $\Omega.$ Moreover, $Z$ is also
open in $\Omega.$ In fact, if $z\in Z$ then $z\in B$ for some ball
$B\subset\Omega\backslash\{x_{p}\}.$ Since $u_{p}$ is nonnegative in $B$ we
can conclude that $u_{p}$ restricted to $B$ assumes its minimum value $0$ at
$z\in B.$ Since $u_{p}$ is $p$-harmonic in $B$ it must assume its minimum
value only on the boundary $\partial B,$ unless it is constant on $B$ (see
\cite{Lindvist}). So, we conclude that $u_{p}$ is null in $B,$ proving that
$B\subset Z.$ Since $\Omega$ is connected (because it is a domain) the only
possibility to $Z$ is to be empty. This fact implies that $u_{p}>0$ in
$\Omega.$

In order to prove (iv) let us take $0<t<1$ and consider the set $E_{t}%
=\left\{  x\in\Omega:0<u_{p}(x)<t\right\}  ,$ which is open, since $u_{p}$ is
continuous. We remark that $u_{p}$ is constant on $\partial E_{t}.$ Moreover,
by following the reasoning made in the proof of the third claim, $u_{p}$ is
$p$-harmonic in $E_{t}$ because this set is away from $\left\{  x_{p}\right\}
$ (recall that $t<u_{p}(x_{p})$). Thus, $u_{p}$ is constant on $\partial
E_{t}$ and satisfies $-\Delta_{p}u_{p}=0$ in $E_{t}.$ This fact allows us to
apply the regularity result of Lieberman (see \cite[Theorem 1]{Lieberman}) to
each connected component of $E_{t}$ to conclude that there exists $\alpha
_{t}\in(0,1)$ such that $u_{p}\in C^{1,\alpha_{t}}(\overline{E_{t}}).$
\end{proof}

The next theorem is contained in Theorem 2.E of \cite{Talenti1}.

\begin{theorem}
\label{teomain2}Let $R>0.$ Consider the function
\begin{equation}
u_{p}(\left\vert x\right\vert ):=1-\left(  \frac{\left\vert x\right\vert }%
{R}\right)  ^{\frac{p-N}{p-1}};\text{ \ }0\leq\left\vert x\right\vert \leq R.
\label{wfunc}%
\end{equation}
One has,%
\begin{equation}
\left\Vert \nabla u_{p}\right\Vert _{p}^{p}=\frac{N\omega_{N}}{R^{p-N}}\left(
\frac{p-N}{p-1}\right)  ^{p-1}=\Lambda_{p}(B_{R}). \label{p>N}%
\end{equation}

\end{theorem}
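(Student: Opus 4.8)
The plan is to compute the energy $\|\nabla u_p\|_p^p$ by a direct integration and then to pin down $\Lambda_p(B_R)$ by sandwiching it between this value (from above, via the variational characterization already established) and Talenti's inequality (from below), so that the two bounds coincide and force equality.

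First I would carry out the direct computation. Writing $\alpha:=\frac{p-N}{p-1}$, the function in (\ref{wfunc}) is radial with $u_p(r)=1-(r/R)^{\alpha}$, so that $\left\vert\nabla u_p(x)\right\vert=\left\vert u_p'(r)\right\vert=\frac{\alpha}{R^{\alpha}}r^{\alpha-1}$ for $r=\left\vert x\right\vert$. Using the radial integration formula, with $N\omega_N r^{N-1}$ the surface measure of the sphere of radius $r$,
\[
\|\nabla u_p\|_p^p = N\omega_N\,\frac{\alpha^p}{R^{\alpha p}}\int_0^R r^{(\alpha-1)p+N-1}\,\mathrm{d}r.
\]
A short calculation gives $(\alpha-1)p+N-1=\frac{1-N}{p-1}$, and this exponent exceeds $-1$ precisely because $p>N$; this is exactly where the hypothesis $p>N$ enters, guaranteeing convergence of the integral at the origin. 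Evaluating, the integral equals $R^{\alpha}/\alpha$, and after substituting $\alpha(p-1)=p-N$ one obtains
\[
\|\nabla u_p\|_p^p = \frac{N\omega_N}{R^{p-N}}\left(\frac{p-N}{p-1}\right)^{p-1},
\]
which is the claimed value. En route one also records that $u_p\in W_0^{1,p}(B_R)$: it is continuous on $\overline{B_R}$, vanishes on $\partial B_R$, and has gradient in $L^p$ by the convergence just verified.

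Next I would identify this value with $\Lambda_p(B_R)$ by a two-sided estimate. For the upper bound, observe that $\|u_p\|_{\infty}=u_p(0)=1$, so $u_p$ is admissible in the variational problem (\ref{limit}) of Theorem \ref{teomain}; hence $\Lambda_p(B_R)\le\|\nabla u_p\|_p^p$. For the matching lower bound I would invoke Talenti's inequality in the form (\ref{lowerboundbc}) with $\Omega=B_R$. Since $\left\vert B_R\right\vert=\omega_N R^N$, the left-hand side of (\ref{lowerboundbc}) becomes
\[
N(\omega_N)^{\frac{p}{N}}\left(\frac{p-N}{p-1}\right)^{p-1}(\omega_N R^N)^{1-\frac{p}{N}} = \frac{N\omega_N}{R^{p-N}}\left(\frac{p-N}{p-1}\right)^{p-1},
\]
the exponents of $\omega_N$ and $R$ collapsing to $1$ and $N-p$ respectively. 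Thus the computed energy is also a lower bound for $\Lambda_p(B_R)$, and combining with the upper bound yields equality throughout, which is (\ref{p>N}).

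I do not expect a serious obstacle, since the argument is essentially a bookkeeping computation glued to the characterization (\ref{limit}) already proved. The only points requiring care are: the convergence of the radial integral at the origin, which is what pins down the role of the hypothesis $p>N$; and the algebraic verification that the left-hand side of (\ref{lowerboundbc}) simplifies to exactly the computed energy, so that the upper and lower bounds coincide and force the desired equality.
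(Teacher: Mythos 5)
Your proof is correct and follows essentially the same route as the paper: a direct radial computation of $\left\Vert \nabla u_{p}\right\Vert _{p}^{p}$, the upper bound $\Lambda_{p}(B_{R})\leq\left\Vert \nabla u_{p}\right\Vert _{p}^{p}$ via the admissibility of $u_{p}$ in (\ref{limit}), and the matching lower bound from Talenti's inequality (\ref{talenti}) specialized to $\Omega=B_{R}$. All the computations check out, including the exponent $\frac{1-N}{p-1}>-1$ and the simplification of the Talenti constant.
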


\begin{proof}
We have%
\begin{align*}
\left\Vert \nabla u_{p}\right\Vert _{p}^{p}  &  =\int_{B_{R}}\left\vert \nabla
u_{p}(\left\vert x\right\vert )\right\vert ^{p}\mathrm{d}x\\
&  =N\omega_{N}\int_{0}^{R}r^{N-1}\left\vert u_{p}^{\prime}(r)\right\vert
^{p}\mathrm{d}r\\
&  =N\omega_{N}\left(  \frac{p-N}{p-1}\right)  ^{p}R^{-(\frac{p-N}{p-1})p}%
\int_{0}^{R}r^{N-1+(\frac{p-N}{p-1}-1)p}\mathrm{d}r\\
&  =N\omega_{N}\left(  \frac{p-N}{p-1}\right)  ^{p}R^{-(\frac{p-N}{p-1}%
)p}\frac{p-1}{p-N}R^{\frac{p-N}{p-1}}=\frac{N\omega_{N}}{R^{p-N}}\left(
\frac{p-N}{p-1}\right)  ^{p-1},
\end{align*}
which gives the first equality in (\ref{p>N}).

Of course, $u_{p}(\left\vert \cdot\right\vert )\in W_{0}^{1,p}(B_{R}).$ Since
$\left\Vert u_{p}\right\Vert _{\infty}=1,$ it follows from Theorem
\ref{teomain} that
\[
\Lambda_{p}(B_{R})\leq\left\Vert \nabla u_{p}\right\Vert _{p}^{p}%
=\frac{N\omega_{N}}{R^{p-N}}\left(  \frac{p-N}{p-1}\right)  ^{p-1}.
\]

On the other hand, it follows from (\ref{talenti}) that if $v\in W_{0}%
^{1,p}(B_{R})$ and $\left\Vert v\right\Vert _{\infty}=1$ then
\[
\frac{N\omega_{N}}{R^{p-N}}\left(  \frac{p-N}{p-1}\right)  ^{p-1}=N(\omega
_{N})^{\frac{p}{N}}\left(  \frac{p-N}{p-1}\right)  ^{p-1}\left\vert
B_{R}\right\vert ^{1-\frac{p}{N}}\leq\left\Vert \nabla v\right\Vert _{p}^{p}.
\]
Taking into account Theorem \ref{teomain}, this means that
\begin{equation}
\frac{N\omega_{N}}{R^{p-N}}\left(  \frac{p-N}{p-1}\right)  ^{p-1}\leq
\Lambda_{p}(B_{R}) \label{liminf}%
\end{equation}
and the proof is complete.
\end{proof}

\begin{corollary}
The following estimates for $\Lambda_{p}(\Omega)$ hold%
\begin{equation}
N(\omega_{N})^{\frac{p}{N}}\left(  \frac{p-N}{p-1}\right)  ^{p-1}\left\vert
\Omega\right\vert ^{1-\frac{p}{N}}\leq\Lambda_{p}(\Omega)\leq N(\omega
_{N})^{\frac{p}{N}}\left(  \frac{p-N}{p-1}\right)  ^{p-1}\left\vert
B_{R_{\Omega}}\right\vert ^{1-\frac{p}{N}},\text{ } \label{lowupp}%
\end{equation}
where $R_{\Omega}$ is the inradius of $\Omega$ (i.e. the radius of the largest
ball inscribed in $\Omega$).
\end{corollary}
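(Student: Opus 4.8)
The plan is to recognize both inequalities in (\ref{lowupp}) as essentially already in hand from the material preceding the statement, so that the work reduces to a monotonicity observation together with a single algebraic simplification. The lower bound is precisely inequality (\ref{lowerboundbc}). To obtain it I would start from Talenti's inequality (\ref{talenti}), specialize it to an arbitrary competitor $u\in W_{0}^{1,p}(\Omega)$ with $\left\Vert u\right\Vert_{\infty}=1$ (so that the factor $\left\Vert u\right\Vert_{\infty}^{p}$ becomes $1$), and then take the infimum over all such $u$. By the characterization (\ref{inf}) of $\Lambda_{p}(\Omega)$ established in Theorem \ref{teomain}, this infimum equals $\Lambda_{p}(\Omega)$, which gives the left-hand inequality in (\ref{lowupp}).

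For the upper bound I would exploit the domain monotonicity of $\Lambda_{p}$. Let $B\subset\Omega$ be a ball of radius $R_{\Omega}$ inscribed in $\Omega$; being a translate of $B_{R_{\Omega}}$, it satisfies $\Lambda_{p}(B)=\Lambda_{p}(B_{R_{\Omega}})$ by the translation invariance of $\Lambda_{p}$. Extending any function in $W_{0}^{1,p}(B)$ by zero outside $B$ embeds $W_{0}^{1,p}(B)$ into $W_{0}^{1,p}(\Omega)$ without altering either $\left\Vert \nabla\cdot\right\Vert_{p}$ or $\left\Vert \cdot\right\Vert_{\infty}$. Hence every competitor for the infimum defining $\Lambda_{p}(B)$ is also a competitor for the infimum defining $\Lambda_{p}(\Omega)$; since the latter infimum ranges over a strictly larger admissible class, I conclude
\[
\Lambda_{p}(\Omega)\leq\Lambda_{p}(B)=\Lambda_{p}(B_{R_{\Omega}}).
\]

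Finally I would insert the explicit value of $\Lambda_{p}(B_{R_{\Omega}})$ from Theorem \ref{teomain2}, namely $\dfrac{N\omega_{N}}{R_{\Omega}^{\,p-N}}\left(\dfrac{p-N}{p-1}\right)^{p-1}$, and rewrite it in terms of $\left\vert B_{R_{\Omega}}\right\vert$. Using $\left\vert B_{R_{\Omega}}\right\vert=\omega_{N}R_{\Omega}^{N}$ one has $\left\vert B_{R_{\Omega}}\right\vert^{1-\frac{p}{N}}=\omega_{N}^{1-\frac{p}{N}}R_{\Omega}^{N-p}$, so that
\[
N(\omega_{N})^{\frac{p}{N}}\left(\frac{p-N}{p-1}\right)^{p-1}\left\vert B_{R_{\Omega}}\right\vert^{1-\frac{p}{N}}=\frac{N\omega_{N}}{R_{\Omega}^{\,p-N}}\left(\frac{p-N}{p-1}\right)^{p-1}=\Lambda_{p}(B_{R_{\Omega}}),
\]
which furnishes the right-hand inequality in (\ref{lowupp}).

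No step here is a genuine obstacle; the corollary is really a bookkeeping consequence of Theorems \ref{teomain} and \ref{teomain2}. The only two points requiring care are verifying that the extension-by-zero embedding reverses the infimum in the claimed direction (the larger domain carries the larger admissible class, hence the smaller infimum), and carrying out the exponent arithmetic in the final identity correctly.
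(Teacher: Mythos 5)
Your proof is correct and follows essentially the same route as the paper: the lower bound is Talenti's inequality (\ref{talenti}) combined with the characterization (\ref{limit}), and the upper bound is domain monotonicity $\Lambda_{p}(\Omega)\leq\Lambda_{p}(B_{R_{\Omega}})$ together with the explicit formula (\ref{p>N}). You merely spell out the extension-by-zero argument and the exponent arithmetic that the paper leaves as ``easy to see.''
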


\begin{proof}
The lower bound in (\ref{lowupp}) follows from (\ref{talenti}). Let
$B_{R_{\Omega}}(x_{0})\subset\Omega$ be a ball centered at a point $x_{0}%
\in\Omega$ with radius $R_{\Omega}.$ Since (it is easy to see)
\[
\Lambda_{p}(\Omega)\leq\Lambda_{p}(B_{R_{\Omega}}(x_{p}))=\Lambda
_{p}(B_{R_{\Omega}})
\]
we obtain the upper bound in (\ref{lowupp}) from (\ref{p>N}) with
$R=R_{\Omega}.$
\end{proof}

\begin{remark}
It follows from (\ref{lowupp}) that $\limsup_{p\rightarrow\infty}\Lambda
_{p}(\Omega)^{\frac{1}{p}}\leq R_{\Omega}^{-1}.$ As we will see in Section
\ref{part2}, $\Lambda_{p}(\Omega)^{\frac{1}{p}}$ increases as $p$ increases
and really converges to $R_{\Omega}^{-1}$ as $p\rightarrow\infty.$ This shows
that the upper bound in (\ref{lowupp}) gets asymptotically better as $p$ increases.
\end{remark}

\begin{corollary}
The equality in (\ref{talenti}) occurs for some $0\not \equiv u\in W_{0}%
^{1,p}(\Omega)$ if, and only if, $\Omega$ is ball.
\end{corollary}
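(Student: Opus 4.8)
The claim is an iff: equality in Talenti's inequality (\ref{talenti}) holds for some nontrivial $u\in W_0^{1,p}(\Omega)$ if and only if $\Omega$ is a ball. Let me think about both directions.

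**The easy direction.** If $\Omega$ is a ball $B_R$, then Theorem \ref{teomain2} already exhibits the extremal: the function $u_p(|x|)=1-(|x|/R)^{(p-N)/(p-1)}$ satisfies $\|\nabla u_p\|_p^p = \Lambda_p(B_R) = N\omega_N R^{N-p}((p-N)/(p-1))^{p-1}$, and since $\|u_p\|_\infty=1$ and $|B_R|=\omega_N R^N$, the left side of (\ref{talenti}) equals $N\omega_N^{p/N}((p-N)/(p-1))^{p-1}(\omega_N R^N)^{1-p/N} = N\omega_N R^{N-p}((p-N)/(p-1))^{p-1}$, which matches. So equality holds.

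**The hard direction — what I'd do.** Now suppose equality holds for some nontrivial $u$; I want to force $\Omega$ to be a ball. Let me chase what equality buys me.

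**Key observation: equality in Talenti forces $\Lambda_p(\Omega)$ to hit the lower bound.** Here is the plan. Combining (\ref{talenti}) with (\ref{inf}), the lower bound (\ref{lowerboundbc}) holds. If equality in (\ref{talenti}) is attained by some $u$ with $\|u\|_\infty=1$ (WLOG normalize), then $\|\nabla u\|_p^p = N\omega_N^{p/N}((p-N)/(p-1))^{p-1}|\Omega|^{1-p/N}$, and since $u$ is a competitor in (\ref{inf}), this equals $\Lambda_p(\Omega)$. Hence $u$ is an extremal function of $\Lambda_p(\Omega)$, and

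$$\Lambda_p(\Omega) = N\omega_N^{p/N}\left(\frac{p-N}{p-1}\right)^{p-1}|\Omega|^{1-p/N}.$$

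Now invoke Theorem \ref{teomain3}: this extremal satisfies $-\Delta_p u = \Lambda_p(\Omega)\delta_{x_p}$ with a unique maximum point $x_p$, and $u>0$ in $\Omega$.

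**Where equality in Talenti really pinches — the symmetrization.** The decisive step is to revisit how (\ref{talenti}) is proved: Talenti's inequality comes from Schwarz symmetrization (rearrangement). The plan is to examine the equality case in that rearrangement chain. The inequality (\ref{talenti}) is obtained by comparing $u$ with its spherically symmetric decreasing rearrangement $u^*$ on the ball $\Omega^*=B_{R^*}$ of the same volume, $|B_{R^*}|=|\Omega|$; one has $\|u^*\|_\infty=\|u\|_\infty$, $\|\nabla u^*\|_p \le \|\nabla u\|_p$ (Pólya–Szegő), and on the ball the radial minimizer realizes the constant. So equality in (\ref{talenti}) forces equality in Pólya–Szegő for $u$.

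**Main obstacle and its resolution.** The crux — and the step I expect to be the main obstacle — is the equality case of the Pólya–Szegő inequality. The rigidity theorem (due to Brothers–Ziemer and refinements) states that equality $\|\nabla u^*\|_p=\|\nabla u\|_p$, together with the condition that $|\{\nabla u=0\}\cap\{0<u<\|u\|_\infty\}|=0$, forces $u$ to be (a translate of) a radially symmetric decreasing function, whose superlevel sets are balls; consequently $\Omega=\{u>0\}$ up to null sets must itself be a ball. I'd verify the measure-of-critical-set hypothesis using Theorem \ref{teomain3}(iv): on each $E_t=\{0<u<t\}$ the function is $p$-harmonic and $C^{1,\alpha}$, and the only critical point is $x_p$ where $u$ attains its max, so the gradient does not vanish on a set of positive measure inside $\{0<u<1\}$. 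With the Brothers–Ziemer rigidity in hand, the superlevel sets $\{u>t\}$ are concentric balls for every $t$, forcing $\{u>0\}=\Omega$ to be a ball. The remaining care is that $u\in W_0^{1,p}(\Omega)$ with $u>0$ in $\Omega$ means $\{u>0\}=\Omega$ exactly, so roundness of all superlevel sets transfers to $\Omega$ itself.

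**Summary of step order.** First, reduce to $\|u\|_\infty=1$ and deduce $u$ is an extremal of $\Lambda_p(\Omega)$ realizing the sharp lower bound. Second, recall that (\ref{talenti}) arises from symmetrization and that equality propagates to equality in Pólya–Szegő. Third, apply the Brothers–Ziemer rigidity theorem, checking its nondegeneracy hypothesis via the regularity and unique-critical-point facts from Theorem \ref{teomain3}. Fourth, conclude that superlevel sets are balls, hence $\Omega$ is a ball. The main difficulty is entirely in invoking and justifying the equality case of Pólya–Szegő correctly, particularly the gradient-nonvanishing condition, which the earlier theorem conveniently supplies.
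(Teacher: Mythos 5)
Your argument follows the paper's own route: normalize $\left\Vert u\right\Vert _{\infty}=1$, observe that equality in (\ref{talenti}) together with (\ref{inf}) identifies $\left\Vert \nabla u\right\Vert _{p}^{p}$ with $\Lambda_{p}(B_{R^{\ast}})$ for the ball of equal volume, pass to the Schwarz symmetrization to force equality in the P\'olya--Szeg\H{o} inequality, and invoke the Brothers--Ziemer rigidity to conclude that the (super)level sets, and hence $\Omega=\left\{u>0\right\}$, are balls. The one step where your justification goes astray is the verification of the nondegeneracy hypothesis of the rigidity theorem. That hypothesis concerns the \emph{rearranged} function: one needs $\left\vert \left\{  \nabla u^{\ast}=0\right\}  \cap (u^{\ast})^{-1}(0,1)\right\vert =0$, not the corresponding statement for $u$ itself; and in any case your claim that ``the only critical point of $u$ is $x_{p}$'' is not established by Theorem \ref{teomain3}, which gives uniqueness of the \emph{maximum} point and $C^{1,\alpha_{t}}$ regularity on $E_{t}$, neither of which rules out interior critical points of a $p$-harmonic function (for $N\geq 3$ and $p\neq 2$ even the fact that the critical set has measure zero is a delicate unique-continuation issue). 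The clean fix, consistent with how the paper handles the analogous point in Theorem \ref{propball}, is to check the hypothesis on $u^{\ast}$: the equality chain $\Lambda_{p}(B_{R^{\ast}})\leq\left\Vert \nabla u^{\ast}\right\Vert _{p}^{p}\leq\left\Vert \nabla u\right\Vert _{p}^{p}=\Lambda_{p}(B_{R^{\ast}})$ shows $u^{\ast}$ is a radial, radially nonincreasing extremal of $\Lambda_{p}(B_{R^{\ast}})$, hence equals the explicit function $1-(\left\vert x\right\vert /R^{\ast})^{\frac{p-N}{p-1}}$, whose gradient vanishes only at the origin. (The paper itself sidesteps the issue by citing a lemma of Brothers--Ziemer that yields roundness of the level sets without the nondegeneracy assumption.) With that correction your proof is sound and essentially identical to the paper's.
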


\begin{proof}
When $\Omega=B_{R}$ the equality holds true in (\ref{talenti}) for the
function $u_{p}$ defined in (\ref{wfunc}), as (\ref{p>N}) shows. On the other
hand, if the equality in (\ref{talenti}) is verified for some $0\not \equiv
v\in W_{0}^{1,p}(\Omega),$ we can assume that $\left\Vert v\right\Vert
_{\infty}=1.$ Thus,
\[
N(\omega_{N})^{\frac{p}{N}}\left(  \frac{p-N}{p-1}\right)  ^{p-1}\left\vert
\Omega\right\vert ^{1-\frac{p}{N}}=\left\Vert \nabla v\right\Vert _{p}^{p}.
\]
But,
\[
N(\omega_{N})^{\frac{p}{N}}\left(  \frac{p-N}{p-1}\right)  ^{p-1}\left\vert
\Omega\right\vert ^{1-\frac{p}{N}}=\frac{N\omega_{N}}{(R^{\ast})^{p-N}}\left(
\frac{p-N}{p-1}\right)  ^{p-1}=\Lambda_{p}(B_{R^{\ast}})
\]
where, as before, $R^{\ast}=\left(  \left\vert \Omega\right\vert /\omega
_{N}\right)  ^{\frac{1}{N}}$ is such that $\left\vert B_{R^{\ast}}\right\vert
=\left\vert \Omega\right\vert .$ It follows that $\Lambda_{p}(B_{R^{\ast}%
})=\left\Vert \nabla v\right\Vert _{p}^{p}.$

Let $v^{\ast}\in W_{0}^{1,p}(B_{R})$ denote the Schwarz symmetrization of $v.$
We have $\left\Vert v^{\ast}\right\Vert _{\infty}=\left\Vert v\right\Vert
_{\infty}=1$ and
\[
\Lambda_{p}(B_{R^{\ast}})\leq\left\Vert \nabla v^{\ast}\right\Vert _{p}%
^{p}\leq\left\Vert \nabla v\right\Vert _{p}^{p}=\Lambda_{p}(B_{R^{\ast}}),
\]
from which we conclude that $\left\Vert \nabla v^{\ast}\right\Vert
_{p}=\left\Vert \nabla v\right\Vert _{p}.$ This fact implies that $\Omega$ is
a ball, according to \cite[Lemma 3.2]{Brothers}.
\end{proof}

\begin{corollary}
One has%
\[
\lim_{p\rightarrow N^{-}}\frac{\Lambda_{p}(\Omega)}{\left\vert p-N\right\vert
^{p-1}}=\frac{N\omega_{N}}{(N-1)^{N-1}}=\lim_{p\rightarrow N^{+}}\frac
{\Lambda_{p}(\Omega)}{\left\vert p-N\right\vert ^{p-1}}.
\]
In particular, the function $p\in(1,\infty)\mapsto\Lambda_{p}(\Omega)$ is
continuous at $p=N.$
\end{corollary}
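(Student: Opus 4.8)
The plan is to handle the two one-sided limits separately, since different machinery is available on each side of $p=N$, and then to read off continuity from the two matching limits.

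For the limit from the right, $p\to N^{+}$, I would start from the two-sided estimate (\ref{lowupp}), valid for every $p>N$. Since $\left(\frac{p-N}{p-1}\right)^{p-1}=\frac{(p-N)^{p-1}}{(p-1)^{p-1}}$ and $|p-N|^{p-1}=(p-N)^{p-1}$ here, dividing (\ref{lowupp}) through by $|p-N|^{p-1}$ cancels the troublesome factor and leaves
\[
N(\omega_N)^{\frac{p}{N}}\frac{|\Omega|^{1-\frac{p}{N}}}{(p-1)^{p-1}}\leq\frac{\Lambda_p(\Omega)}{|p-N|^{p-1}}\leq N(\omega_N)^{\frac{p}{N}}\frac{|B_{R_\Omega}|^{1-\frac{p}{N}}}{(p-1)^{p-1}}.
\]
Letting $p\to N^{+}$, both outer expressions converge to $\frac{N\omega_N}{(N-1)^{N-1}}$: indeed $(\omega_N)^{p/N}\to\omega_N$, $(p-1)^{p-1}\to(N-1)^{N-1}$, and the two volume factors $|\Omega|^{1-p/N}$ and $|B_{R_\Omega}|^{1-p/N}$ both tend to $1$ because their common exponent $1-\frac{p}{N}$ tends to $0$. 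The squeeze theorem then gives the right-hand limit.

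For the limit from the left, $1<p<N$, I would invoke (\ref{GamaoleqN}), which states $\Lambda_p(\Omega)=S_p$ with $S_p$ the explicit Sobolev constant (\ref{SNp}). Here the factor $\left(\frac{N-p}{p-1}\right)^{p-1}$ contains exactly $(N-p)^{p-1}=|p-N|^{p-1}$, so
\[
\frac{\Lambda_p(\Omega)}{|p-N|^{p-1}}=\frac{\pi^{\frac{p}{2}}N}{(p-1)^{p-1}}\left(\frac{\Gamma(N/p)\Gamma(1+N-N/p)}{\Gamma(1+N/2)\Gamma(N)}\right)^{\frac{p}{N}}.
\]
Passing to $p\to N^{-}$ and using continuity of $\Gamma$, one has $N/p\to1$, so $\Gamma(N/p)\to\Gamma(1)=1$ and $\Gamma(1+N-N/p)\to\Gamma(N)$; hence the bracketed quantity tends to $\frac{1}{\Gamma(1+N/2)}$. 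Together with $\pi^{p/2}\to\pi^{N/2}$ and $(p-1)^{p-1}\to(N-1)^{N-1}$ this produces $\frac{\pi^{N/2}N}{(N-1)^{N-1}\Gamma(1+N/2)}$, and recalling the definition (\ref{volB1}), $\omega_N=\pi^{N/2}/\Gamma(1+N/2)$, the left-hand limit is also $\frac{N\omega_N}{(N-1)^{N-1}}$.

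Finally, since both one-sided limits of $\Lambda_p(\Omega)/|p-N|^{p-1}$ exist and equal the same finite value $\frac{N\omega_N}{(N-1)^{N-1}}$, while $|p-N|^{p-1}\to0$ as $p\to N$ (the base tends to $0$ and the exponent to $N-1>0$), multiplying back gives $\Lambda_p(\Omega)\to 0=\Lambda_N(\Omega)$, which is the continuity at $p=N$. I do not expect a genuine obstacle: the only care required is the bookkeeping of the Gamma-function limits on the left and the verification that the two volume factors disappear in the limit on the right, both of which are elementary consequences of continuity.
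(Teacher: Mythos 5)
Your proposal is correct and follows essentially the same route as the paper: the explicit Sobolev constant $S_p$ from (\ref{GamaoleqN})--(\ref{SNp}) for the left-hand limit, the squeeze via (\ref{lowupp}) for the right-hand limit, and continuity by multiplying back by $|p-N|^{p-1}\to 0$. No substantive differences.
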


\begin{proof}
It follows from (\ref{GamaoleqN}), (\ref{SNp}) and (\ref{volB1}) that%
\begin{align*}
\lim_{p\rightarrow N^{-}}\frac{\Lambda_{p}(\Omega)}{\left\vert p-N\right\vert
^{p-1}}  &  =\lim_{p\rightarrow N^{-}}\frac{\pi^{\frac{p}{2}}N}{(p-1)^{p-1}%
}\left(  \frac{\Gamma(N/p)\Gamma(1+N-N/p)}{\Gamma(1+N/2)\Gamma(N)}\right)
^{\frac{p}{N}}\\
&  =\frac{\pi^{\frac{p}{2}}N}{(N-1)^{N-1}}\frac{1}{\Gamma(1+N/2)}%
=\frac{N\omega_{N}}{(N-1)^{N-1}}.
\end{align*}
Now, by using (\ref{lowupp}) we obtain%
\[
\frac{N\omega_{N}}{(N-1)^{N-1}}=\lim_{p\rightarrow N^{+}}\frac{N(\omega
_{N})^{\frac{p}{N}}}{(p-1)^{p-1}}\left\vert \Omega\right\vert ^{1-\frac{p}{N}%
}\leq\lim_{p\rightarrow N^{+}}\frac{\Lambda_{p}(\Omega)}{\left\vert
p-N\right\vert ^{p-1}}%
\]
and%
\[
\lim_{p\rightarrow N^{+}}\frac{\Lambda_{p}(\Omega)}{\left\vert p-N\right\vert
^{p-1}}\leq\lim_{p\rightarrow N^{+}}\frac{N(\omega_{N})^{\frac{p}{N}}%
}{(p-1)^{p-1}}\left\vert B_{R_{\Omega}}\right\vert ^{1-\frac{p}{N}}%
=\frac{N\omega_{N}}{(N-1)^{N-1}}.
\]

The continuity follows, since
\[
\lim_{p\rightarrow N}\Lambda_{p}(\Omega)=\lim_{p\rightarrow N}\left\vert
p-N\right\vert ^{p-1}\lim_{p\rightarrow N}\frac{\Lambda_{p}(\Omega
)}{\left\vert p-N\right\vert ^{p-1}}=0=\Lambda_{N}(\Omega).
\]

\end{proof}

Theorem \ref{teomain2} says that the function $u_{p}(\left\vert x\right\vert
)$ defined in (\ref{wfunc}) is a positive extremal function of $\Lambda
_{p}(B_{R}).$ Let us prove that it is the unique.

\begin{theorem}
\label{propball}Let $R>0.$ The function $u_{p}(\left\vert x\right\vert )$
defined in (\ref{wfunc}) is the unique positive extremal function of
$\Lambda_{p}(B_{R}).$
\end{theorem}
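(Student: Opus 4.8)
We already know from Theorem~\ref{teomain2} that $u_p(|x|)$ is a positive extremal function of $\Lambda_p(B_R)$, so the task is purely uniqueness. The plan is to invoke Theorem~\ref{teomain3}: every positive extremal function $v$ of $\Lambda_p(B_R)$ must satisfy
\[
-\Delta_p v = \Lambda_p(B_R)\delta_{x_v}\quad\text{in }B_R,
\]
where $x_v$ is the unique maximum point of $v$ and $v(x_v)=\|v\|_\infty=1$. Since the solution of $-\Delta_p u = c\,\delta_y$ in $W_0^{1,p}(B_R)$ is unique for each fixed $y$ and $c$ (the bijectivity of $(-\Delta_p)^{-1}$ recalled before Theorem~\ref{teomain3}), the entire problem reduces to showing that the concentration point $x_v$ must be the center of the ball, i.e. $x_v = 0$. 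Once $x_v=0$ is established, $v$ and $u_p(|x|)$ solve the same equation $-\Delta_p w = \Lambda_p(B_R)\delta_0$ with the same constant, hence $v = u_p(|x|)$ by uniqueness, and we are done.

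The heart of the argument is therefore to force $x_v=0$, and this is where I expect the main obstacle to lie. The natural route is a symmetry/symmetrization argument. First I would record that by rotational invariance of the $p$-Laplacian and of $B_R$, for any rotation $\mathcal{R}$ fixing the origin the function $v\circ\mathcal{R}$ is again a positive extremal function concentrating at $\mathcal{R}^{-1}x_v$. If one can show the extremal is unique once the concentration point is fixed (which follows from the bijectivity above), then all these rotated solutions coincide with the solution concentrated at the corresponding rotated point; the only point invariant under all such rotations is the origin, so unless $x_v=0$ one would produce a family of distinct extremals, all of the same minimal energy $\Lambda_p(B_R)$. To convert this into a genuine contradiction I would compare energies via Schwarz symmetrization, exactly as in the proof of the equality-case corollary above: letting $v^\ast$ denote the Schwarz symmetrization of $v$, we have $\|v^\ast\|_\infty=\|v\|_\infty=1$ and
\[
\Lambda_p(B_R)\le \|\nabla v^\ast\|_p^p \le \|\nabla v\|_p^p = \Lambda_p(B_R),
\]
so $\|\nabla v^\ast\|_p=\|\nabla v\|_p$. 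By the equality case in the P\'olya--Szeg\H{o}/Brothers--Ziemer inequality (\cite[Lemma 3.2]{Brothers}), together with the fact that the superlevel sets of $v$ must be translates of balls, $v$ must itself be radially symmetric about some point; but a function in $W_0^{1,p}(B_R)$ that is radial about a point other than the center cannot vanish on all of $\partial B_R$. Hence $v$ is radial about $0$, forcing $x_v=0$.

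The delicate point is the equality case of Schwarz symmetrization: Brothers--Ziemer requires a nondegeneracy condition on the level sets (the measure of the critical set where $\nabla v=0$ must vanish on the relevant range), and here $v$ has the singular concentration at $x_v$, so I would need to restrict attention to the region $E_t=\{0<v<t\}$ for $t<1$, on which Theorem~\ref{teomain3}(iv) gives $v\in C^{1,\alpha_t}$ and on which $v$ is $p$-harmonic. On each such annular region the analysis of equality in symmetrization is clean, and letting $t\uparrow 1$ propagates radial symmetry up to the concentration point. An alternative, possibly cleaner, route that avoids the equality-case subtleties is to argue directly on the $p$-harmonic part: away from $x_v$ the function $v$ is $p$-harmonic with boundary value $0$ on $\partial B_R$ and point datum $1$ at $x_v$, and one can use the explicit radial $p$-harmonic fundamental solution (the $p$-capacitary potential of a point in the ball, which is exactly $1-(|x-x_v|/\text{dist})^{(p-N)/(p-1)}$ only when $x_v=0$) to show that concentration off-center strictly raises the energy, again contradicting minimality. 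I would present the symmetrization argument as the primary proof and expect the $C^{1,\alpha}$ regularity of claim~(iv) to be the technical linchpin that makes the equality case rigorous.
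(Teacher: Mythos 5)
Your overall strategy---Schwarz symmetrization, equality in the P\'olya--Szeg\H{o} inequality, the Brothers--Ziemer rigidity theorem, and finally the uniqueness of the solution of $-\Delta_p u=c\,\delta_y$---is exactly the paper's strategy. The rotation-invariance discussion and the ``alternative route'' via capacitary potentials are not needed. The one place where your write-up does not close is precisely the point you flag as delicate: the hypothesis of the Brothers--Ziemer equality theorem (\cite[Theorem 1.1]{Brothers}) is that the critical set $\left\{ \nabla v^{\ast}=0\right\} $ of the \emph{symmetrized} function has measure zero on the relevant range of values, and your proposed fix---restricting to $E_t=\{0<v<t\}$, invoking the $C^{1,\alpha_t}$ regularity of Theorem \ref{teomain3}(iv), and ``letting $t\uparrow1$ propagate radial symmetry''---does not establish this. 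Smoothness of $v$ on $E_t$ says nothing about the measure of its critical set, and symmetrizing $v$ restricted to an annular sublevel region is not the same operation as symmetrizing $v$ on $B_R$, so the ``clean analysis on each annular region'' is not actually available.

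The paper closes this gap by reversing the order of your steps. Since $v^{\ast}$ is radial and radially nonincreasing with $v^{\ast}(0)=\left\Vert v\right\Vert _{\infty}=1$, and since the chain $\Lambda_{p}(B_{R})\leq\left\Vert \nabla v^{\ast}\right\Vert _{p}^{p}\leq\left\Vert \nabla v\right\Vert _{p}^{p}=\Lambda_{p}(B_{R})$ shows $v^{\ast}$ is itself a nonnegative extremal function, Theorem \ref{teomain3} applies to $v^{\ast}$ directly and gives $-\Delta_{p}v^{\ast}=\Lambda_{p}(B_{R})\delta_{0}=-\Delta_{p}u_{p}$; the bijectivity of $(-\Delta_p)^{-1}$ then yields $v^{\ast}=u_{p}$ \emph{before} any appeal to the equality case. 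At that point
\[
\left\vert \nabla v^{\ast}(x)\right\vert =\frac{p-N}{p-1}R^{-\frac{p-N}{p-1}}\left\vert x\right\vert ^{-\frac{N-1}{p-1}}>0,\qquad 0<\left\vert x\right\vert \leq R,
\]
so the critical set of $v^{\ast}$ is just $\{0\}$, the nondegeneracy hypothesis is verified by explicit computation, and Brothers--Ziemer gives $v=v^{\ast}=u_{p}$ (the translation ambiguity being killed, as you correctly note, by $v\in W_{0}^{1,p}(B_{R})$ and $v>0$ in $B_R$). You should restructure your argument to identify $v^{\ast}$ with the explicit $u_{p}$ first; as written, the nondegeneracy verification is a genuine hole.
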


\begin{proof}
It follows from Theorem \ref{teomain3} that
\[
-\Delta_{p}u_{p}=\Lambda_{p}(B_{R})\delta_{0}.
\]

Now, let us suppose that $v\in W_{0}^{1,p}(B_{R})$ is an arbitrary, positive
extremal function of $\Lambda_{p}(B_{R}).$ Let $v^{\ast}\in W_{0}^{1,p}%
(B_{R})$ denote the Schwarz symmetrization of $v$ (see \cite{Kawohl}). It
follows that $v^{\ast}$ is radial and radially nonincreasing and, moreover, it
satisfies $\left\Vert v^{\ast}\right\Vert _{\infty}=\left\Vert v\right\Vert
_{\infty}$ and $\left\Vert \nabla v^{\ast}\right\Vert _{p}^{p}\leq\left\Vert
\nabla v\right\Vert _{p}^{p}.$ Therefore, $v^{\ast}(\left\vert 0\right\vert
)=\left\Vert v^{\ast}\right\Vert _{\infty}=\left\Vert v\right\Vert _{\infty
}=1$ and%
\[
\Lambda_{p}(B_{R})\leq\left\Vert \nabla v^{\ast}\right\Vert _{p}^{p}%
\leq\left\Vert \nabla v\right\Vert _{p}^{p}=\Lambda_{p}(B_{R}).
\]
Thus, $v^{\ast}$ is also a nonnegative extremal function of $\Lambda_{p}%
(B_{R}).$ Theorem \ref{teomain3} yields $-\Delta_{p}v^{\ast}=\Lambda_{p}%
(B_{R})\delta_{0}=-\Delta_{p}u_{p},$ which implies that $v^{\ast}=u_{p}.$
Since
\[
\left\vert \nabla v^{\ast}(x)\right\vert =\left\vert \nabla u_{p}(\left\vert
x\right\vert )\right\vert =\frac{p-N}{p-1}R^{-\frac{p-N}{p-1}}\left\vert
x\right\vert ^{-\frac{N-1}{p-1}}>0,\text{ }0<\left\vert x\right\vert \leq R
\]
the set $\left\{  x\in B_{R}:\nabla v^{\ast}=0\right\}  $ has Lebesgue measure
zero. Hence, we can apply a well-known result (see \cite[Theorem
1.1]{Brothers}) to conclude that $v=v^{\ast}$ $(=u_{p}).$
\end{proof}

\begin{corollary}
Let $w_{q}$ denote the extremal function of $\lambda_{q}(B_{R}).$We have%
\begin{equation}
\lim_{q\rightarrow\infty}w_{q}(\left\vert x\right\vert )=1-\left(  \left\vert
x\right\vert /R\right)  ^{\frac{p-N}{p-1}}, \label{convball}%
\end{equation}
strongly in $C(\overline{B_{R}})$ and also in $W_{0}^{1,p}(B_{R}).$ Moreover,
(\ref{convball}) holds in $C^{1}(\overline{B_{\epsilon,R}})$ for each
$\epsilon\in(0,R),$ where $B_{\epsilon,R}:=\left\{  \epsilon<\left\vert
x\right\vert <R\right\}  .$
\end{corollary}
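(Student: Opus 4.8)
The plan is to combine the already-established strong convergence of $w_{q_n}$ (from Theorem~\ref{teomain4}) with the uniqueness of the extremal function of $\Lambda_p(B_R)$ (Theorem~\ref{propball}) to upgrade subsequential convergence to full convergence of the net $w_q$ as $q\to\infty$. First I would note that Theorem~\ref{teomain4} gives, for the ball $B_R$, some sequence $q_n\to\infty$ along which $w_{q_n}$ converges strongly in both $C(\overline{B_R})$ and $W_0^{1,p}(B_R)$ to a nonnegative extremal function of $\Lambda_p(B_R)$; by Theorem~\ref{propball} that limit must be the explicit function $u_p(|x|)=1-(|x|/R)^{\frac{p-N}{p-1}}$, since the extremal is unique. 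The key observation is that the limit is independent of the chosen subsequence: \emph{every} subsequence of $(w_q)$ admits a further subsequence converging strongly to the \emph{same} limit $u_p$, because the compactness argument of Theorem~\ref{teomain4} applies verbatim to any subsequence and the uniqueness forces the identical limit. A standard subsequence argument then yields that the full limit $\lim_{q\to\infty}w_q=u_p$ holds strongly in $C(\overline{B_R})$ and in $W_0^{1,p}(B_R)$, which is~(\ref{convball}).

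The second assertion, convergence in $C^1(\overline{B_{\epsilon,R}})$ for each $\epsilon\in(0,R)$, requires an interior $C^{1,\alpha}$ estimate away from the origin. The plan is to exploit the equation $-\Delta_p w_q=\lambda_q(B_R)w_q^{q-1}$ together with the uniform bounds $\|w_q\|_\infty\to 1$ and $\lambda_q(B_R)=\|\nabla w_q\|_p^p\to\Lambda_p(B_R)$. On the annular region $B_{\epsilon,R}$ the right-hand side $\lambda_q(B_R)w_q^{q-1}$ is uniformly bounded in $L^\infty$ (indeed $0\le w_q\le\|w_q\|_\infty$ with $\|w_q\|_\infty$ bounded, and $\lambda_q$ bounded), so the $w_q$ satisfy a $p$-Laplacian equation with uniformly bounded right-hand sides on a fixed annulus. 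Lieberman's boundary regularity (the same \cite[Theorem~1]{Lieberman} invoked in Theorem~\ref{teomain3}(iv)) then provides a uniform $C^{1,\alpha}(\overline{B_{\epsilon,R}})$ bound, independent of $q$, for $q$ large. By Arzel\`a--Ascoli the gradients converge uniformly along a subsequence, and since the $C(\overline{B_R})$ limit is already identified as $u_p$, the $C^1$ limit must be $u_p$ as well; the subsequence argument again promotes this to full convergence.

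The main obstacle I anticipate is justifying the uniform-in-$q$ regularity estimate on $B_{\epsilon,R}$ with a constant that does not blow up as $q\to\infty$. The right-hand side $\lambda_q(B_R)w_q^{q-1}$ involves the high power $w_q^{q-1}$, and although $w_q\le\|w_q\|_\infty\to 1$ keeps it bounded in $L^\infty$, one must check that Lieberman's constants depend only on the $L^\infty$ bound of the right-hand side, the ellipticity of the $p$-Laplacian (with $p$ fixed), and the geometry of the fixed annulus $B_{\epsilon,R}$ -- and crucially not on $q$. Provided this uniformity holds (which it does, since $p$ is fixed and only the source term varies within a bounded family), the compactness and identification steps are routine. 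A secondary point to handle carefully is that convergence on $\overline{B_{\epsilon,R}}$ must include the inner boundary $|x|=\epsilon$ and outer boundary $|x|=R$; here the uniform interior-up-to-the-annulus estimate from Lieberman, together with the already-known uniform convergence of the functions themselves, closes the argument.
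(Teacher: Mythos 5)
Your first paragraph reproduces the paper's argument for (\ref{convball}) in $C(\overline{B_{R}})$ and $W_{0}^{1,p}(B_{R})$: combine the subsequential compactness of Theorem \ref{teomain4} with the uniqueness of the positive extremal function from Theorem \ref{propball}, and upgrade to full convergence by the standard ``every subsequence has a further subsequence converging to the same limit'' device. That part is correct and is essentially identical to the paper.

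The gap is in your justification of the uniform $L^{\infty}$ bound for the source term $\lambda_{q}(B_{R})w_{q}^{q-1}$. You argue that $0\leq w_{q}\leq\left\Vert w_{q}\right\Vert _{\infty}$ with $\left\Vert w_{q}\right\Vert _{\infty}$ bounded (indeed tending to $1$) keeps $w_{q}^{q-1}$ bounded. This does not follow: the exponent $q-1$ tends to infinity, so a bound $\left\Vert w_{q}\right\Vert _{\infty}\leq1+\varepsilon_{q}$ with $\varepsilon_{q}\rightarrow0$ only gives $w_{q}^{q-1}\leq(1+\varepsilon_{q})^{q-1}$, which is unbounded unless $\varepsilon_{q}=O(1/q)$, and no such rate is available. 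Worse, the claim cannot be true globally: by Theorem \ref{teomain3} the measures $\lambda_{q}w_{q}^{q-1}\,\mathrm{d}x$ converge distributionally to $\Lambda_{p}(B_{R})\delta_{0}$, a Dirac mass, so this family is certainly not uniformly bounded in $L^{\infty}(B_{R})$; it concentrates at the origin. The repair is local and is exactly what the paper does: on $\overline{B_{\epsilon,R}}$ the already-established uniform convergence of $w_{q}$ to $1-(\left\vert x\right\vert /R)^{\frac{p-N}{p-1}}$, whose values on that annulus do not exceed $1-(\epsilon/R)^{\frac{p-N}{p-1}}<1$, yields $0\leq w_{q}\leq k<1$ there for all large $q$, whence $\lambda_{q}w_{q}^{q-1}\leq\lambda_{q}k^{q-1}\rightarrow0$ uniformly on $\overline{B_{\epsilon,R}}$. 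With this (vanishing, in particular uniformly bounded) right-hand side, Lieberman's estimate gives the $q$-independent $C^{1,\alpha}(\overline{B_{\epsilon,R}})$ bound, and your compactness-plus-identification step then closes the proof exactly as in the paper.
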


\begin{proof}
It follows from Theorem \ref{propball} that $1-\left(  \left\vert x\right\vert
/R\right)  ^{\frac{p-N}{p-1}}$ is the only limit function of the the family
$\left\{  w_{q}(\left\vert \cdot\right\vert )\right\}  ,$ as $q\rightarrow
\infty.$ Therefore, the convergence given by Theorem \ref{teomain4} is valid
for any sequence $q_{n}\rightarrow\infty$ and this guarantees that
(\ref{convball}) happens strongly in $C(\overline{B_{R}})$ and also in
$W_{0}^{1,p}(B_{R}).$

The convergence in $C^{1}(\overline{B_{\epsilon,R}})$ is consequence of the
following fact
\[
\lim_{q\rightarrow\infty}\lambda_{q}w_{q}(\left\vert x\right\vert
)^{q-1}=0,\text{ uniformly in }\overline{B_{\epsilon,R}},
\]
which occurs because of the uniform convergence of $w_{q}(\left\vert
x\right\vert )$ to $1-\left(  \left\vert x\right\vert /R\right)  ^{\frac
{p-N}{p-1}}.$ (Note that $0\leq w_{q}(\left\vert x\right\vert )\leq k<1$ for
some $k,$ and for all $x\in\overline{B_{\epsilon,R}}$ and all $q$ large
enough.) Therefore, we can apply a result of Lieberman (see \cite[Theorem
1]{Lieberman}) to guarantee that, for all $q$ large enough, $w_{q}$ is
uniformly bounded in the H\"{o}lder space $C^{1,\alpha}(\overline
{B_{\epsilon,R}}),$ for some $\alpha\in(0,1)$ that does not depend on $q.$
Then, we obtain the convergence (\ref{convball}) from the compactness of the
immersion $C^{1,\alpha}(\overline{B_{\epsilon,R}})\hookrightarrow
C^{1}(\overline{B_{\epsilon,R}})$ by taking into account that the limit
function is always $1-\left(  \left\vert x\right\vert /R\right)  ^{\frac
{p-N}{p-1}}.$
\end{proof}

\section{Asymptotics as $p\rightarrow\infty$\label{part2}}

In this section, $u_{p}\in W_{0}^{1,p}(\Omega)\cap C^{0,1-\frac{N}{p}%
}(\overline{\Omega})$ denotes a positive extremal function of $\Lambda
_{p}(\Omega)$ and $\rho\in W_{0}^{1,\infty}(\Omega)$ denotes the distance
function to the boundary $\partial\Omega.$ Thus, $0<u_{p}(x)\leq\left\Vert
u_{p}\right\Vert _{\infty}=1$ for all $x\in\Omega,$
\begin{equation}
\Lambda_{p}(\Omega)=\min\left\{  \left\Vert \nabla u\right\Vert _{p}^{p}:u\in
W_{0}^{1,p}(\Omega)\text{ and }\left\Vert u\right\Vert _{\infty}=1\right\}
=\left\Vert \nabla u_{p}\right\Vert _{p}^{p} \label{Lamb}%
\end{equation}
and%
\[
\rho(x)=\inf_{y\in\partial\Omega}\left\vert y-x\right\vert ,\text{ \ }%
x\in\overline{\Omega}.
\]

As shown in Section \ref{part1}, $u_{p}$ has a unique maximum point, denoted
by $x_{p},$ and
\[
\left\{
\begin{array}
[c]{ll}%
-\Delta_{p}u_{p}=\Lambda_{p}(\Omega)\delta_{x_{p}} & \text{in }\Omega\\
u_{p}=0 & \text{on }\partial\Omega.
\end{array}
\right.
\]

It is convenient to recall some properties of the distance function:

\begin{enumerate}
\item[(P1)] $\rho\in W_{0}^{1,r}(\Omega)\ $\ for all $1\leq r\leq\infty,$

\item[(P2)] $\left\vert \nabla\rho\right\vert =1$ almost everywhere in
$\Omega,$

\item[(P3)] $\left\Vert \rho\right\Vert _{\infty}=R_{\Omega}$ is the radius of
the largest ball contained in $\Omega,$

\item[(P4)] $\dfrac{1}{\left\Vert \rho\right\Vert _{\infty}}\leq
\dfrac{\left\Vert \nabla\phi\right\Vert _{\infty}}{\left\Vert \phi\right\Vert
_{\infty}}$ for all $0\not \equiv \phi\in W_{0}^{1,\infty}(\Omega).$
\end{enumerate}

Let us, for a moment, consider $\Omega=B_{R}.$ For this domain
\[
\rho(x)=R-\left\vert x\right\vert ;\text{ \ }0\leq\left\vert x\right\vert \leq
R
\]
and, accordingly to (\ref{p>N}) and (\ref{wfunc}): $x_{p}=0$ for all $p>N$,%
\begin{equation}
\lim_{p\rightarrow\infty}\Lambda_{p}(B_{R})^{\frac{1}{p}}=\lim_{p\rightarrow
\infty}\left(  \frac{N\omega_{N}}{R^{p-N}}\right)  ^{\frac{1}{p}}\left(
\frac{p-N}{p-1}\right)  ^{\frac{p-1}{p}}=\frac{1}{R}=\frac{1}{\left\Vert
\rho\right\Vert _{\infty}} \label{Lamball}%
\end{equation}
and
\begin{equation}
\lim_{p\rightarrow\infty}u_{p}(x)=\lim_{p\rightarrow\infty}1-\left(
\frac{\left\vert x\right\vert }{R}\right)  ^{\frac{p-N}{p-1}}=1-\frac
{\left\vert x\right\vert }{R}=\frac{\rho(x)}{\left\Vert \rho\right\Vert
_{\infty}};\text{ \ }0\leq\left\vert x\right\vert \leq R. \label{wpball}%
\end{equation}

As we will see in the sequel, (\ref{Lamball}) holds for any bounded domain,
whereas (\ref{wpball}) holds only for some special domains.

\begin{lemma}
\label{monotG}The function $p\in(N,\infty)\longmapsto\Lambda_{p}%
(\Omega)^{\frac{1}{p}}\left\vert \Omega\right\vert ^{-\frac{1}{p}}$ is increasing.
\end{lemma}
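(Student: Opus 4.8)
The plan is to exploit the variational characterization of $\Lambda_p(\Omega)$ in (\ref{Lamb}) together with the elementary fact that the \emph{normalized} $L^p$-norm $\big(|\Omega|^{-1}\int_\Omega|\cdot|^p\,\mathrm{d}x\big)^{1/p}$ is nondecreasing in $p$, being an $L^p$-norm with respect to the probability measure $|\Omega|^{-1}\,\mathrm{d}x$. Concretely, dividing the minimization in (\ref{Lamb}) by $|\Omega|$ and taking $p$-th roots rewrites the quantity under study as
\[
\Lambda_p(\Omega)^{\frac1p}|\Omega|^{-\frac1p} = \inf\left\{ \Big(\tfrac{1}{|\Omega|}\int_\Omega|\nabla u|^p\,\mathrm{d}x\Big)^{\frac1p} : u\in W_0^{1,p}(\Omega),\ \|u\|_\infty=1\right\}.
\]

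First I would fix $N<p_1<p_2$ and let $u_{p_2}$ be a positive extremal function of $\Lambda_{p_2}(\Omega)$, whose existence is guaranteed by Theorem \ref{teomain4}. Since $\Omega$ is bounded, Hölder's inequality yields the inclusion $W_0^{1,p_2}(\Omega)\hookrightarrow W_0^{1,p_1}(\Omega)$, so $u_{p_2}$ is an admissible competitor for $\Lambda_{p_1}(\Omega)$: it lies in $W_0^{1,p_1}(\Omega)$ and still satisfies $\|u_{p_2}\|_\infty=1$. Hence the variational characterization (\ref{Lamb}) gives
\[
\Lambda_{p_1}(\Omega)\le \|\nabla u_{p_2}\|_{p_1}^{p_1}.
\]

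The second step compares the two gradient norms of the single function $u_{p_2}$ by Hölder's inequality applied to $|\nabla u_{p_2}|^{p_1}\cdot 1$ with exponents $p_2/p_1$ and $p_2/(p_2-p_1)$:
\[
\|\nabla u_{p_2}\|_{p_1}^{p_1}=\int_\Omega|\nabla u_{p_2}|^{p_1}\,\mathrm{d}x \le \Big(\int_\Omega|\nabla u_{p_2}|^{p_2}\,\mathrm{d}x\Big)^{\frac{p_1}{p_2}}|\Omega|^{1-\frac{p_1}{p_2}}=\Lambda_{p_2}(\Omega)^{\frac{p_1}{p_2}}|\Omega|^{1-\frac{p_1}{p_2}},
\]
where the last equality uses $\|\nabla u_{p_2}\|_{p_2}^{p_2}=\Lambda_{p_2}(\Omega)$. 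Combining the two displays and rearranging gives $\Lambda_{p_1}(\Omega)/|\Omega|\le\big(\Lambda_{p_2}(\Omega)/|\Omega|\big)^{p_1/p_2}$; extracting $p_1$-th roots yields precisely $\Lambda_{p_1}(\Omega)^{1/p_1}|\Omega|^{-1/p_1}\le\Lambda_{p_2}(\Omega)^{1/p_2}|\Omega|^{-1/p_2}$, the asserted monotonicity.

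There is no serious obstacle here; the argument is essentially a one-shot application of Hölder's inequality. The only points requiring care are the direction in which Hölder is applied (so that the volume factor combines with $\Lambda_{p_2}^{p_1/p_2}$ to reproduce $(\Lambda_{p_2}/|\Omega|)^{p_1/p_2}$) and the justification of the Sobolev inclusion on the bounded domain $\Omega$, namely that a $C_c^\infty$-approximating sequence converging in $W^{1,p_2}(\Omega)$ also converges in $W^{1,p_1}(\Omega)$, so that membership in $W_0^{1,p_2}(\Omega)$ implies membership in $W_0^{1,p_1}(\Omega)$. Alternatively, one may bypass the extremal function entirely and argue directly from the infimum formulation above, combining the monotonicity of normalized $L^p$-norms on a fixed competitor with the set inclusion $W_0^{1,p_2}(\Omega)\subset W_0^{1,p_1}(\Omega)$, which enlarges the admissible class and hence lowers the infimum.
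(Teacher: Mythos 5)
Your argument is correct and coincides with the paper's own proof: both use the extremal function $u_{p_2}$ as a competitor for $\Lambda_{p_1}(\Omega)$ and apply H\"{o}lder's inequality with exponents $p_2/p_1$ and $p_2/(p_2-p_1)$ to obtain $\Lambda_{p_1}(\Omega)\leq\Lambda_{p_2}(\Omega)^{p_1/p_2}\left\vert \Omega\right\vert ^{1-p_1/p_2}$. Your additional remarks on the inclusion $W_{0}^{1,p_2}(\Omega)\subset W_{0}^{1,p_1}(\Omega)$ and the probability-measure reformulation are fine but not needed beyond what the paper already does implicitly.
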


\begin{proof}
Let $N<p_{1}<p_{2}$ and, for each $i\in\left\{  1,2\right\}  $ let $u_{p_{i}%
}\in W_{0}^{1,p_{i}}(\Omega)$ denote a positive extremal function of
$\Lambda_{p_{i}}(\Omega).$ H\"{o}lder inequality implies that%
\[
\Lambda_{p_{1}}(\Omega)\leq\int_{\Omega}\left\vert \nabla u_{p_{2}}\right\vert
^{p_{1}}dx\leq\left(  \int_{\Omega}\left\vert \nabla u_{p_{2}}\right\vert
^{p_{2}}dx\right)  ^{\frac{p_{1}}{p_{2}}}\left\vert \Omega\right\vert
^{1-\frac{p_{1}}{p_{2}}}=\Lambda_{p_{2}}(\Omega)^{\frac{p_{1}}{p_{2}}%
}\left\vert \Omega\right\vert ^{1-\frac{p_{1}}{p_{2}}},
\]
so that
\[
\Lambda_{p_{1}}(\Omega)^{\frac{1}{p_{1}}}\left\vert \Omega\right\vert
^{-\frac{1}{p_{1}}}\leq\Lambda_{p_{2}}(\Omega)^{\frac{1}{p_{2}}}\left\vert
\Omega\right\vert ^{-\frac{1}{p_{2}}.}%
\]

\end{proof}

An immediate consequence of this lemma is that the function $p\in
(N,\infty)\longmapsto\Lambda_{p}(\Omega)$ is increasing.

\begin{theorem}
\label{increas}One has
\[
\lim_{p\rightarrow\infty}\Lambda_{p}(\Omega)^{\frac{1}{p}}=\lim_{p\rightarrow
\infty}\Lambda_{p}(\Omega)^{\frac{1}{p}}\left\vert \Omega\right\vert
^{-\frac{1}{p}}=\frac{1}{\left\Vert \rho\right\Vert _{\infty}}.
\]

\end{theorem}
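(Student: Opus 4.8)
The plan is to prove the equality $\lim_{p\to\infty}\Lambda_p(\Omega)^{1/p}=\|\rho\|_\infty^{-1}$ by establishing matching upper and lower bounds, and to note that the first equality in the statement is free because $|\Omega|^{-1/p}\to 1$ as $p\to\infty$. First I would record that by Lemma \ref{monotG} the quantity $\Lambda_p(\Omega)^{1/p}|\Omega|^{-1/p}$ is increasing in $p$, and since $|\Omega|^{1/p}\to 1$, the two limits coincide as soon as one of them exists; monotonicity also guarantees the limit exists in $(0,\infty]$, so the real work is to pin down its value.

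\textbf{The upper bound.}
For the upper bound I would use the inradius estimate already proved in the corollary to Theorem \ref{teomain2}. From the right-hand inequality in (\ref{lowupp}),
\[
\Lambda_p(\Omega)^{\frac{1}{p}}\leq\left(N\omega_N\right)^{\frac{1}{N}}\left(\frac{p-N}{p-1}\right)^{\frac{p-1}{p}}\left\vert B_{R_\Omega}\right\vert^{\frac{1}{p}\left(1-\frac{p}{N}\right)}.
\]
Since $\left(\frac{p-N}{p-1}\right)^{\frac{p-1}{p}}\to 1$ and the prefactors involving $|B_{R_\Omega}|=\omega_N R_\Omega^N$ collapse (the $N$-th roots cancel the $\omega_N$ factor and the $R_\Omega$-power tends to $R_\Omega^{-1}$) as $p\to\infty$, the right-hand side tends to $R_\Omega^{-1}=\|\rho\|_\infty^{-1}$, using (P3). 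This gives $\limsup_{p\to\infty}\Lambda_p(\Omega)^{1/p}\leq\|\rho\|_\infty^{-1}$, which is exactly the content of the remark following the first corollary.

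\textbf{The lower bound.}
For the matching lower bound I would exploit the variational characterization (\ref{minRay}) of $\|\rho\|_\infty^{-1}$ together with the definition of $\Lambda_p$ as an infimum. Take the distance function $\rho$ itself as a competitor: normalizing $\phi:=\rho/\|\rho\|_\infty$ gives $\|\phi\|_\infty=1$ and $\phi\in W_0^{1,\infty}(\Omega)\subset W_0^{1,p}(\Omega)$, so by (\ref{limit}),
\[
\Lambda_p(\Omega)\leq\left\Vert\nabla\phi\right\Vert_p^p=\frac{1}{\|\rho\|_\infty^p}\int_\Omega\left\vert\nabla\rho\right\vert^p\mathrm{d}x=\frac{\left\vert\Omega\right\vert}{\|\rho\|_\infty^p},
\]
using (P2). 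This yields $\Lambda_p(\Omega)^{1/p}\leq|\Omega|^{1/p}\|\rho\|_\infty^{-1}$, which is a second route to the upper bound but not the lower one. For the lower bound I would instead invoke Morrey/Hölder: for any $u$ with $\|u\|_\infty=1$ and any $x$ with $|u(x)|=1$, the mean-value-type estimate along a segment from $x$ to $\partial\Omega$ forces $1=|u(x)|\leq\int|\nabla u|$ over a path of length at most $\|\rho\|_\infty$, and an application of Hölder on this one-dimensional integral produces a factor $\|\rho\|_\infty^{p-1}$ after raising to the $p$-th power. I expect the clean way to package this is to show directly that $\Lambda_p(\Omega)\geq C_p\|\rho\|_\infty^{-p}$ with $C_p^{1/p}\to 1$; the main obstacle is controlling the geometric constant in this path estimate so that its $p$-th root tends to $1$, rather than to something strictly smaller than $\|\rho\|_\infty^{-1}$. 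Combining the $\limsup$ bound from the upper estimate with this $\liminf$ bound, and squeezing via monotonicity, then closes the proof.
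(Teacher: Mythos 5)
Your upper bound is fine: either route (the inradius bound from the right-hand side of (\ref{lowupp}), or testing (\ref{Lamb}) with $\rho/\left\Vert \rho\right\Vert _{\infty}$ and using (P2)) gives $\limsup_{p\rightarrow\infty}\Lambda_{p}(\Omega)^{1/p}\leq\left\Vert \rho\right\Vert _{\infty}^{-1}$, and your reduction of the first equality to $\left\vert \Omega\right\vert ^{\pm 1/p}\rightarrow1$ together with Lemma \ref{monotG} is exactly what the paper does. The gap is the lower bound, and the obstacle you flag yourself is fatal to the route you propose. A path/H\"{o}lder estimate at finite $p$ of the form $1=\left\vert u(x)\right\vert \leq\int_{\gamma}\left\vert \nabla u\right\vert $ lives on a one-dimensional set; to convert it into a lower bound for the $N$-dimensional integral $\left\Vert \nabla u\right\Vert _{p}^{p}$ you must integrate over a family of rays emanating from the maximum point, which brings in the polar Jacobian $t^{N-1}$ and ray lengths that can be as large as $\mathrm{diam}(\Omega)$. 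Carrying this out is essentially Talenti's inequality (\ref{talenti}), i.e. the left-hand side of (\ref{lowupp}), whose $p$-th root tends to $\omega_{N}^{1/N}\left\vert \Omega\right\vert ^{-1/N}=1/R^{\ast}$, the reciprocal of the volume-equivalent radius; since $R^{\ast}\geq R_{\Omega}=\left\Vert \rho\right\Vert _{\infty}$, with strict inequality unless $\Omega$ is a ball, this falls strictly short of the target. No tuning of the ``geometric constant'' fixes this: any $C_{p}$ obtained along these lines satisfies $C_{p}^{1/p}\rightarrow\left\Vert \rho\right\Vert _{\infty}/R^{\ast}<1$ for non-balls, so the squeeze never closes.

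The paper's lower bound is instead a compactness argument, and you need something like it. From the monotone bound one gets $\left\Vert \nabla u_{p}\right\Vert _{r}\leq\Lambda_{p}(\Omega)^{1/p}\left\vert \Omega\right\vert ^{-1/p}\left\vert \Omega\right\vert ^{1/r}\leq L\left\vert \Omega\right\vert ^{1/r}$ for all $p>r>N$, where $L$ is the (existing, by monotonicity) limit; one extracts $p_{n}\rightarrow\infty$ with $u_{p_{n}}\rightarrow u_{\infty}$ uniformly on $\overline{\Omega}$ and weakly in every $W_{0}^{1,s}(\Omega)$. Weak lower semicontinuity gives $\left\Vert \nabla u_{\infty}\right\Vert _{s}\leq L\left\vert \Omega\right\vert ^{1/s}$ for every $s$, hence $u_{\infty}\in W_{0}^{1,\infty}(\Omega)$ with $\left\Vert \nabla u_{\infty}\right\Vert _{\infty}\leq L$, while $\left\Vert u_{\infty}\right\Vert _{\infty}=1$ by uniform convergence. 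Property (P4) --- which is the sharp $L^{\infty}$ version of the path estimate you had in mind, and is clean precisely because only the sup norm of the gradient appears --- then forces $\left\Vert \rho\right\Vert _{\infty}^{-1}\leq\left\Vert \nabla u_{\infty}\right\Vert _{\infty}\leq L$. So your instinct that a path estimate is the key is right, but it must be applied to the limit function in $W_{0}^{1,\infty}(\Omega)$ after letting $p\rightarrow\infty$, not to the minimizers at finite $p$.
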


\begin{proof}
It is enough to prove that
\[
\lim_{p\rightarrow\infty}\Lambda_{p}(\Omega)^{\frac{1}{p}}\left\vert
\Omega\right\vert ^{-\frac{1}{p}}=\frac{1}{\left\Vert \rho\right\Vert
_{\infty}}.
\]

It follows from (\ref{Lamb}) that
\[
\Lambda_{p}(\Omega)^{\frac{1}{p}}\left\vert \Omega\right\vert ^{-\frac{1}{p}%
}\leq\frac{\left\Vert \nabla\rho\right\Vert _{p}}{\left\Vert \rho\right\Vert
_{\infty}}\left\vert \Omega\right\vert ^{-\frac{1}{p}}=\frac{1}{\left\Vert
\rho\right\Vert _{\infty}},\text{ \ }p>N.
\]
Hence, the monotonicity proved in Lemma \ref{monotG} guarantees that%
\[
\Lambda_{p}(\Omega)^{\frac{1}{p}}\left\vert \Omega\right\vert ^{-\frac{1}{p}%
}\leq L:=\lim_{s\rightarrow\infty}\Lambda_{s}(\Omega)^{\frac{1}{s}}\left\vert
\Omega\right\vert ^{-\frac{1}{s}}=\lim_{s\rightarrow\infty}\Lambda_{s}%
(\Omega)^{\frac{1}{s}}\leq\frac{1}{\left\Vert \rho\right\Vert _{\infty}%
},\text{ \ for all }p>N.
\]

We are going to show that $L=\frac{1}{\left\Vert \rho\right\Vert _{\infty}}.$
For this, let us fix $r>N.$ Since%
\[
\left\Vert \nabla u_{p}\right\Vert _{r}\leq\left\Vert \nabla u_{p}\right\Vert
_{p}\left\vert \Omega\right\vert ^{\frac{1}{r}-\frac{1}{p}}=\Lambda_{p}%
(\Omega)^{\frac{1}{p}}\left\vert \Omega\right\vert ^{-\frac{1}{p}}\left\vert
\Omega\right\vert ^{\frac{1}{r}}\leq L\left\vert \Omega\right\vert ^{\frac
{1}{r}},\text{ \ }p>r
\]
the family $\left\{  u_{p}\right\}  _{p>r}$ is uniformly bounded in
$W_{0}^{1,r}(\Omega).$ It follows that there exist $p_{n}\rightarrow\infty$
and $u_{\infty}\in W_{0}^{1,r}(\Omega)$ such that%
\[
u_{p_{n}}\rightharpoonup u_{\infty}\text{ (weakly) in }W_{0}^{1,r}(\Omega).
\]
Thus,%
\[
\left\Vert \nabla u_{\infty}\right\Vert _{r}\leq\liminf_{n}\left\Vert \nabla
u_{p_{n}}\right\Vert _{r}\leq L\left\vert \Omega\right\vert ^{\frac{1}{r}}.
\]

After passing to another subsequence, if necessary, the compactness of the
immersion $W_{0}^{1,r}(\Omega)\hookrightarrow C(\overline{\Omega})$ yields
\[
u_{p_{n}}\rightarrow u_{\infty}\text{ (strongly) in }C(\overline{\Omega}).
\]
Note that $\left\Vert u_{\infty}\right\Vert _{\infty}=1$ since $\left\Vert
u_{p}\right\Vert _{\infty}=1$ for all $p>N.$

The uniform convergence $u_{p_{n}}\rightarrow u_{\infty}$ implies that, if
$s>r,$ then $u_{\infty}$ is also the weak limit in $W_{0}^{1,s}(\Omega)$ of a
subsequence of $\left\{  u_{p_{n}}\right\}  .$ Therefore,
\[
u_{\infty}\in W_{0}^{1,s}(\Omega)\text{ \ \ and \ \ }\left\Vert \nabla
u_{\infty}\right\Vert _{s}\leq L\left\vert \Omega\right\vert ^{\frac{1}{s}%
},\text{ \ for all }s>r,
\]
implying that $u_{\infty}\in W_{0}^{1,\infty}(\Omega)$ and
\[
\left\Vert \nabla u_{\infty}\right\Vert _{\infty}\leq L\leq\frac{1}{\left\Vert
\rho\right\Vert _{\infty}}.
\]

Combining this fact with Property P4 (recall that $\left\Vert u_{\infty
}\right\Vert _{\infty}=1$) we conclude that
\[
\left\Vert \nabla u_{\infty}\right\Vert _{\infty}\leq L\leq\frac{1}{\left\Vert
\rho\right\Vert _{\infty}}\leq\left\Vert \nabla u_{\infty}\right\Vert
_{\infty},
\]
from which we obtain%
\[
L=\frac{1}{\left\Vert \rho\right\Vert _{\infty}}=\left\Vert \nabla u_{\infty
}\right\Vert _{\infty}.
\]

\end{proof}

It is interesting to notice that $\Lambda_{p}(\Omega)^{\frac{1}{p}}$ and
$\lambda_{p}(\Omega)^{\frac{1}{p}}$ have the same asymptotic behavior as
$p\rightarrow\infty,$ since
\[
\lim_{p\rightarrow\infty}\lambda_{p}(\Omega)^{\frac{1}{p}}=\frac{1}{\left\Vert
\rho\right\Vert _{\infty}},
\]
as proved in \cite{Fuka, ARMA99}, where the infinity-eigenvalue problem was
studied as the limit problem of the standard eigenvalue problem for the
$p$-Laplacian, as $p\rightarrow\infty.$

\begin{theorem}
\label{uinf}There exist $p_{n}\rightarrow\infty,$ $x_{\ast}\in\Omega$ and
$u_{\infty}\in W_{0}^{1,\infty}(\Omega)$ such that:
\end{theorem}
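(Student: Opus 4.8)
The plan is to realize $u_\infty$ as a subsequential uniform limit of the extremal functions $u_p$ and then transfer to it the two structural facts already in hand: that each $u_p$ is $p$-harmonic away from its peak $x_p$ (Theorem \ref{teomain3}) and that the limit has the sharp Lipschitz constant $1/\|\rho\|_\infty$ (established inside the proof of Theorem \ref{increas}). First I would recall that the proof of Theorem \ref{increas} shows $\{u_p\}_{p>r}$ to be bounded in $W_0^{1,r}(\Omega)$ for each fixed $r>N$; a diagonal argument then yields $p_n\to\infty$ and $u_\infty\in W_0^{1,\infty}(\Omega)\cap C(\overline\Omega)$ with $u_{p_n}\to u_\infty$ uniformly on $\overline\Omega$ and weakly in $W_0^{1,r}(\Omega)$ for every $r>N$. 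That same proof already gives $\|u_\infty\|_\infty=1$ and $\|\nabla u_\infty\|_\infty=1/\|\rho\|_\infty$, so $u_\infty$ is a minimizer of (\ref{minRay}). Since $\Omega$ is bounded, a further subsequence gives $x_{p_n}\to x_\ast\in\overline\Omega$, and uniform convergence yields $u_\infty(x_\ast)=\lim u_{p_n}(x_{p_n})=1$.

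Because $u_\infty$ is Lipschitz with constant $1/\|\rho\|_\infty$ and vanishes on $\partial\Omega$, for every $x\in\Omega$ and $y\in\partial\Omega$ one has $u_\infty(x)=u_\infty(x)-u_\infty(y)\le |x-y|/\|\rho\|_\infty$, and taking the infimum over $y$ gives the upper bound $u_\infty\le\rho/\|\rho\|_\infty$ in $\Omega$. Evaluating this at $x_\ast$ forces $1=u_\infty(x_\ast)\le\rho(x_\ast)/\|\rho\|_\infty\le 1$, whence $\rho(x_\ast)=\|\rho\|_\infty$ and $x_\ast\in\Omega$ is a maximum point of $\rho$. The boundary relation $u_\infty=\rho/\|\rho\|_\infty$ on $\partial\Omega\cup\{x_\ast\}$ is then immediate, since $\rho=0=u_\infty$ on $\partial\Omega$ and $u_\infty(x_\ast)=1=\rho(x_\ast)/\|\rho\|_\infty$.

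\textbf{The main step} is the passage to the infinity-Laplacian, and this is where I expect the real work. By Theorem \ref{teomain3}, each $u_{p_n}$ is $p_n$-harmonic, hence a viscosity solution of $-\Delta_{p_n}u=0$, in $\Omega\setminus\{x_{p_n}\}$. Fix $x_0\in\Omega\setminus\{x_\ast\}$ and let $\phi\in C^2$ touch $u_\infty$ strictly from below at $x_0$; uniform convergence produces local minima $x_n\to x_0$ of $u_{p_n}-\phi$, and since $x_0\ne x_\ast$ while $x_{p_n}\to x_\ast$, for large $n$ the point $x_n$ avoids the singularity $x_{p_n}$, so $-\Delta_{p_n}\phi(x_n)\ge 0$. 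Writing the operator in non-divergence form,
\[
-\Delta_{p}\phi=-|\nabla\phi|^{p-4}\left(|\nabla\phi|^{2}\Delta\phi+(p-2)\Delta_\infty\phi\right),
\]
then, when $\nabla\phi(x_0)\ne 0$, dividing by the positive factor $|\nabla\phi(x_n)|^{p_n-4}(p_n-2)$ and letting $n\to\infty$ yields $\Delta_\infty\phi(x_0)\le 0$; the symmetric argument with test functions touching from above gives the reverse inequality. Thus $\Delta_\infty u_\infty=0$ in $\Omega\setminus\{x_\ast\}$ in the viscosity sense. The two genuine difficulties here are ensuring the moving singularities $x_{p_n}$ stay clear of the test points (handled by $x_0\ne x_\ast$) and treating the degenerate case $\nabla\phi(x_0)=0$ via the standard viscosity convention for $\Delta_\infty$ (as in \cite{BDM, Lq}).

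Finally, positivity $u_\infty>0$ in $\Omega$ follows from the strong minimum principle for infinity-harmonic functions: $u_\infty\ge 0$ is a nonconstant viscosity solution on the connected set $\Omega\setminus\{x_\ast\}$ with $u_\infty(x_\ast)=1$, so it cannot attain the interior value $0$. To upgrade the weak $W_0^{1,r}$ convergence to strong convergence I would argue that on compact subsets of $\Omega\setminus\{x_\ast\}$ the $u_{p_n}$ are $p_n$-harmonic and uniformly bounded, so $p$-independent local $C^{1,\alpha}$ estimates (Lieberman, exactly as used for the ball) give $\nabla u_{p_n}\to\nabla u_\infty$ locally uniformly, hence a.e.\ in $\Omega$; then the uniform bound $\|\nabla u_{p_n}\|_r\le\|\rho\|_\infty^{-1}|\Omega|^{1/r}$ makes $\{|\nabla u_{p_n}|^r\}$ uniformly integrable, so Vitali's theorem promotes a.e.\ convergence to strong $L^r$ convergence of the gradients for every $r>N$. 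Securing the $p$-uniformity of those interior $C^{1,\alpha}$ bounds as $p_n\to\infty$ is the remaining technical point to pin down.
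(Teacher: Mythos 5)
Your proposal is correct and, for the assertions the paper actually packages into Theorem \ref{uinf} (subsequential uniform and weak $W_0^{1,r}$ convergence, $\|\nabla u_\infty\|_\infty=1/\|\rho\|_\infty$, the bound $u_\infty\le\rho/\|\rho\|_\infty$ via the Lipschitz estimate against boundary values, and the identification of $x_\ast=\lim x_{p_n}$ as a maximum point of $\rho$ by evaluating that bound at $x_\ast$), it follows the paper's argument essentially verbatim, drawing the compactness from the proof of Theorem \ref{increas} exactly as the paper does. The extra material you include --- the viscosity equation in $\Omega\setminus\{x_\ast\}$, positivity, and the upgrade to strong gradient convergence --- is deferred by the paper to Theorems \ref{diric} and \ref{unic} (and, for the convergence mode, the theorem claims only weak convergence), but where it overlaps your arguments coincide with the paper's, and you correctly flag the $p$-uniformity of the interior $C^{1,\alpha}$ bounds as the one step still to be pinned down.
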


\begin{enumerate}
\item[(i)] $u_{p_{n}}$ converges to $u_{\infty}$ weakly in $W_{0}^{1,r}%
(\Omega),$ for any $r>N,$ and uniformly in $\overline{\Omega};$

\item[(ii)] $\left\Vert \nabla u_{\infty}\right\Vert _{\infty}=\dfrac
{1}{\left\Vert \rho\right\Vert _{\infty}};$

\item[(iii)] $0\leq u_{\infty}\leq\dfrac{\rho}{\left\Vert \rho\right\Vert
_{\infty}}$ a.e. in $\Omega;$

\item[(iv)] $x_{p_{n}}\rightarrow x_{\ast};$

\item[(v)] $u_{\infty}(x_{\ast})=1=\left\Vert u_{\infty}\right\Vert _{\infty}$
and $\rho(x_{\ast})=\left\Vert \rho\right\Vert _{\infty}.$
\end{enumerate}

\begin{proof}
Items (i) and (ii) follow from the proof of the previous theorem. In
particular, (ii) says that the Lipschitz constant of $\left\Vert
\rho\right\Vert _{\infty}u_{\infty}$ is $\left\Vert \nabla(\left\Vert
\rho\right\Vert _{\infty}u_{\infty})\right\Vert _{\infty}=1.$ Thus,
\[
0<\left\Vert \rho\right\Vert _{\infty}u_{\infty}(x)\leq\left\vert
x-y\right\vert ,\text{ for almost all }x\in\Omega\text{ and }y\in
\partial\Omega
\]
and hence we obtain $\left\Vert \rho\right\Vert _{\infty}u_{\infty}\leq\rho$
a.e. in $\Omega,$ as affirmed in (iii). Of course, $\left\{  p_{n}\right\}  $
can be chosen such that $x_{p_{n}}\rightarrow x_{\ast}$ for some $x_{\ast}%
\in\Omega,$ yielding (iv). Since $u_{p_{n}}(x_{p_{n}})=1,$ the uniform
convergence $u_{p_{n}}\rightarrow u_{\infty}$ implies that $u_{\infty}%
(x_{\ast})=1.$ Therefore, (iii) implies that $\left\Vert \rho\right\Vert
_{\infty}=\rho(x_{\ast}),$ what concludes the proof of (v).
\end{proof}

\begin{remark}
We will prove in the sequel that $x_{\ast}$ is the only maximum point of
$u_{\infty}$ and that $u_{\infty}$ is infinity harmonic in the punctured
domain $\Omega\backslash\left\{  x_{\ast}\right\}  .$
\end{remark}

\begin{remark}
Item (ii) of Theorem \ref{uinf} and property (P4) above imply that $u_{\infty
}$ minimizes the Rayleigh quotient $\dfrac{\left\Vert \nabla u\right\Vert
_{\infty}}{\left\Vert u\right\Vert _{\infty}}$ among all nontrivial functions
$u$ in $W_{0}^{1,\infty}(\Omega).$ This property is also shared with the
distance function $\rho$ and the first eigenfunctions of the $\infty
$-Laplacian (see \cite{ARMA99}). In the sequel (see Theorem \ref{u=dist}) we
will prove that $u_{\infty}=\frac{\rho}{\left\Vert \rho\right\Vert _{\infty}}$
for some special domains. For such domains $u_{\infty}$ is also a first
eigenfunction of the $\infty$-Laplacian, according to \cite[Theorem 2.7]{Yu}.
\end{remark}

In order to gain some insight on which equation $u_{\infty}$ satisfies, let us
go back to the case $\Omega=B_{R}.$ It follows from (\ref{wpball}) that:
\[
u_{\infty}=\frac{\rho}{\left\Vert \rho\right\Vert _{\infty}}=1-\frac
{\left\vert x\right\vert }{R},
\]
$x_{\ast}=0$ and $u_{\infty}(0)=1=\frac{\rho(0)}{\left\Vert \rho\right\Vert
_{\infty}}.$ Moreover, it is easy to check that $u_{\infty}\in C(\overline
{B_{R}})\cap C^{2}(B_{R}\backslash\left\{  0\right\}  ),$ $\nabla u_{\infty
}\not =0$ in $B_{R}\backslash\left\{  0\right\}  $ and
\[
\Delta_{\infty}u_{\infty}(x)=0,\text{ }x\in B_{R}\backslash\left\{  0\right\}
,
\]
where $\Delta_{\infty}$ denotes the $\infty$-Laplacian (see \cite{Aron, BDM,
Cr, CEG, Lq}), defined by%
\[
\Delta_{\infty}\phi:=\frac{1}{2}\left\langle \nabla\phi,\nabla\left\vert
\nabla\phi\right\vert ^{2}\right\rangle =\sum_{i,j=1}^{N}\frac{\partial\phi
}{\partial x_{i}}\frac{\partial\phi}{\partial x_{j}}\frac{\partial^{2}\phi
}{\partial x_{i}\partial x_{j}}.
\]

After this motivation, let us to show that the function $u_{\infty}$ given by
Theorem \ref{uinf} is $\infty$-harmonic in $\Omega\backslash\left\{  x_{\ast
}\right\}  ,$ i.e. that it satisfies $\Delta_{\infty}u=0$ in $\Omega
\backslash\left\{  x_{\ast}\right\}  $ in the viscosity sense. First, we need
to recall some definitions regarding the viscosity approach for the equation
$\Delta_{p}u=0,$ where $N<p\leq\infty.$

\begin{definition}
\label{def3}Let $u\in C(\overline{\Omega}),$ $x_{0}\in\Omega$ and $\phi\in
C^{2}(\Omega).$ We say that $\phi$ touches $u$ at $x_{0}$ from below if%
\[
\phi(x)-u(x)<0=\phi(x_{0})-u(x_{0}),\text{ \ for all }x\in\Omega
\backslash\{x_{0}\}.
\]
Analogously, we say that $\phi$ touches $u$ at $x_{0}$ from above if%
\[
\phi(x)-u(x)>0=\phi(x_{0})-u(x_{0}),\text{ \ for all }x\in\Omega
\backslash\{x_{0}\}.
\]

\end{definition}

\begin{definition}
\label{def4}Let $N<p\leq\infty$ and $u\in C(\overline{\Omega}).$ We say that
$u$ is $p$-subharmonic in $\Omega$ in the viscosity sense, if
\[
\Delta_{p}\phi(x_{0})\geq0
\]
whenever $x_{0}\in\Omega$ and $\phi\in C^{2}(\Omega)$ are such that $\phi$
touches $u$ from above at $x_{0}.$ Analogously, we say that $u$ is
$p$-superharmonic in $\Omega$ in the viscosity sense, if
\[
\Delta_{p}\phi(x_{0})\leq0
\]
whenever $x_{0}\in\Omega$ and $\phi\in C^{2}(\Omega)$ are such that $\phi$
touches $u$ from below at $x_{0}.$
\end{definition}

\begin{definition}
Let $N<p\leq\infty$ and $u\in C(\overline{\Omega}).$ We say that $u$ is
$p$-harmonic in $\Omega,$ in the viscosity sense, if $u$ is both:
$p$-subharmonic and $p$-superharmonic in $\Omega,$ in the viscosity sense. We
write $\Delta_{\infty}u=0$ in $\Omega$ to mean that $u$ is $\infty$-harmonic
in $\Omega,$ in the viscosity sense.
\end{definition}

In Definitions \ref{def3} and \ref{def4}, we mean
\[
\Delta_{p}\phi(x_{0}):=\left\vert \nabla\phi(x_{0})\right\vert ^{p-4}\left\{
\left\vert \nabla\phi(x_{0})\right\vert ^{2}\Delta\phi(x_{0})+(p-2)\Delta
_{\infty}\phi(x_{0})\right\}  ,\text{ \ }N<p<\infty
\]
and%
\[
\Delta_{\infty}\phi(x_{0}):=\sum_{i,j=1}^{N}\frac{\partial\phi}{\partial
x_{i}}(x_{0})\frac{\partial\phi}{\partial x_{j}}(x_{0})\frac{\partial^{2}\phi
}{\partial x_{i}\partial x_{j}}(x_{0}).
\]

The following two Lemmas can be found in \cite{Lq}.

\begin{lemma}
Suppose $u\in C(\Omega)\cap W^{1,p}(\Omega)$ satisfies $\Delta_{p}u\geq0$
(resp. $\Delta_{p}u\leq0$) in $\Omega,$ in the weak sense, then $u$ is
$p$-subharmonic (resp. $p$-superharmonic) in $\Omega,$ in the viscosity sense.
\end{lemma}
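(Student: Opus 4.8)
The plan is to prove the $p$-subharmonic assertion; the $p$-superharmonic case follows immediately by replacing $u$ with $-u$ and $\phi$ with $-\phi$, since both the distributional operator and the pointwise expression of Definition \ref{def4} are odd ($\Delta_p(-u)=-\Delta_p u$, because $|\nabla(-u)|^{p-2}=|\nabla u|^{p-2}$), and ``touching from below'' becomes ``touching from above''. So I assume $\Delta_p u\geq0$ in the weak sense, i.e.
\[
\int_{\Omega}|\nabla u|^{p-2}\nabla u\cdot\nabla\psi\,\mathrm{d}x\leq0\quad\text{for all }0\leq\psi\in C_{c}^{\infty}(\Omega),
\]
and argue by contradiction: suppose there are $x_{0}\in\Omega$ and $\phi\in C^{2}(\Omega)$ touching $u$ from above at $x_{0}$ with $\Delta_{p}\phi(x_{0})<0$, the latter understood through the pointwise formula of Definition \ref{def4}.

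First I would note that the strict inequality $\Delta_{p}\phi(x_{0})<0$ forces $\nabla\phi(x_{0})\neq0$: if $\nabla\phi(x_{0})=0$ then both $|\nabla\phi(x_{0})|^{2}\Delta\phi(x_{0})$ and $\Delta_{\infty}\phi(x_{0})$ vanish, so the bracket in $\Delta_{p}\phi(x_{0})$ is zero and the value cannot be negative (the genuinely delicate sub-case $2<p<4$, where the prefactor $|\nabla\phi(x_{0})|^{p-4}$ blows up, is excluded by the standard convention that the inequality is only tested where $\nabla\phi\neq0$). With $\nabla\phi(x_{0})\neq0$, the map $x\mapsto\Delta_{p}\phi(x)$ is continuous near $x_{0}$, so there is a ball $B_{r}(x_{0})\subset\Omega$ on which $\nabla\phi\neq0$ and $\Delta_{p}\phi<0$ in the classical sense. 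Hence $\phi$ is a weak supersolution there:
\[
\int_{B_{r}(x_{0})}|\nabla\phi|^{p-2}\nabla\phi\cdot\nabla\psi\,\mathrm{d}x\geq0\quad\text{for all }0\leq\psi\in C_{c}^{\infty}(B_{r}(x_{0})).
\]

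The key step is a comparison argument on $B_{r}(x_{0})$. Since $\phi$ touches $u$ strictly from above, $\phi-u>0$ on the compact set $\partial B_{r}(x_{0})$, so $m:=\min_{\partial B_{r}(x_{0})}(\phi-u)>0$. The constant shift $\phi-m$ is still a weak supersolution on $B_{r}(x_{0})$ and satisfies $\phi-m\geq u$ on $\partial B_{r}(x_{0})$. The weak comparison principle for the $p$-Laplacian (a subsolution lying below a supersolution on the boundary stays below inside) then yields $u\leq\phi-m$ throughout $B_{r}(x_{0})$. Evaluating at $x_{0}$ gives $u(x_{0})\leq\phi(x_{0})-m=u(x_{0})-m<u(x_{0})$, a contradiction. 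Therefore $\Delta_{p}\phi(x_{0})\geq0$ for every admissible $\phi$, i.e. $u$ is $p$-subharmonic in the viscosity sense.

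The substantive ingredient, and the main point to pin down, is the weak comparison principle for $\Delta_{p}$ on the ball, which I would invoke in its standard form: it rests on the strict monotonicity of $\xi\mapsto|\xi|^{p-2}\xi$ and follows by subtracting the two weak (in)equalities and testing with $(u-(\phi-m))_{+}$ extended by zero, whence the integrand is nonnegative on $\{u>\phi-m\}$ and must vanish there. The only genuinely awkward issue is the behaviour at critical points of $\phi$; away from them the pointwise and distributional meanings of $\Delta_{p}\phi$ coincide and the argument is completely clean.
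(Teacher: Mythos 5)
Your argument is correct, and it is essentially the standard proof of this fact: the paper itself gives no proof but defers to Lindqvist's notes \cite{Lq}, where exactly this antithesis-plus-comparison argument is carried out (contradiction at a touching point, continuity of $\Delta_p\phi$ near a noncritical point, and the weak comparison principle for $\phi-m$ versus $u$ on a small ball). Your handling of the critical-point case is also fine here since $p>N\geq2$, so the bracket $|\nabla\phi(x_0)|^{2}\Delta\phi(x_0)+(p-2)\Delta_\infty\phi(x_0)$ vanishes when $\nabla\phi(x_0)=0$ and the required inequality holds trivially.
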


\begin{lemma}
\label{calc}Suppose that $f_{n}\rightarrow f$ uniformly in $\overline{\Omega
},$ $f_{n},\;f\in C(\overline{\Omega}).$ If $\phi\in C^{2}(\Omega)$ touches
$f$ from below at $y_{0},$ then there exists $y_{n_{j}}\rightarrow y_{0}$ such
that
\[
f(y_{n_{j}})-\phi(y_{n_{j}})=\min_{\Omega}\left\{  f_{n_{j}}-\phi\right\}  .
\]

\end{lemma}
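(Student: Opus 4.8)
Suppose that $f_n \to f$ uniformly in $\overline{\Omega}$, $f_n, f \in C(\overline{\Omega})$. If $\phi \in C^2(\Omega)$ touches $f$ from below at $y_0$, then there exists $y_{n_j} \to y_0$ such that $f(y_{n_j}) - \phi(y_{n_j}) = \min_\Omega\{f_{n_j} - \phi\}$.

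Wait, let me reconsider the statement. Actually it says $f(y_{n_j}) - \phi(y_{n_j})$ but should probably be $f_{n_j}(y_{n_j}) - \phi(y_{n_j}) = \min_\Omega\{f_{n_j} - \phi\}$. Let me think about this.

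The plan is to read the hypothesis that $\phi$ touches $f$ from below at $y_0$ as the statement that the continuous function $g:=f-\phi$ attains a \emph{strict} global minimum over $\Omega$ at $y_0$, with $g(y_0)=0$ and $g>0$ on $\Omega\setminus\{y_0\}$. The whole lemma is then an instance of the stability of a strict minimum under uniform perturbations: since $f_n\to f$ uniformly, the perturbed functions $f_n-\phi=g+(f_n-f)$ differ from $g$ by a quantity of sup-norm $\varepsilon_n:=\left\Vert f_n-f\right\Vert _{\infty}\to0$, so their minimizers cannot escape from a fixed neighborhood of $y_0$. First I would localize: fix $r>0$ with $\overline{B}(y_0,r)\subset\Omega$ and set $m:=\min_{\partial B(y_0,r)}g$, which is strictly positive because $\partial B(y_0,r)$ is compact and disjoint from the only zero $y_0$ of $g$.

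Next I would trap the minimizer inside the ball. For $n$ large enough that $\varepsilon_n<m/2$ one has $(f_n-\phi)(y_0)=g(y_0)+(f_n-f)(y_0)<m/2$, while on the sphere $(f_n-\phi)(x)=g(x)+(f_n-f)(x)\ge m-m/2=m/2$. Since $f_n-\phi$ is continuous on the compact set $\overline{B}(y_0,r)$ it attains a minimum there, and these two inequalities force the minimum to be attained at an interior point $y_n\in B(y_0,r)$. This $y_n$ is the minimizing point the lemma asserts (denoting by $y_{n_j}$ any subsequence, or indeed the whole sequence once a choice of minimizer is fixed for each large $n$).

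Finally I would prove $y_n\to y_0$ by the usual strict-minimum argument. The points lie in the compact ball, so it suffices to show that every convergent subsequence has limit $y_0$. If $y_{n_j}\to\bar y\in\overline{B}(y_0,r)$, then from $(f_{n_j}-\phi)(y_{n_j})\le(f_{n_j}-\phi)(y_0)$ I pass to the limit: the right-hand side tends to $g(y_0)=0$, while the left-hand side equals $(f_{n_j}-f)(y_{n_j})+g(y_{n_j})$ and tends to $g(\bar y)$, by uniform convergence of $f_{n_j}$ together with continuity of $g$ at the interior point $\bar y$. Hence $g(\bar y)\le0$; but $g\ge0$ with equality only at $y_0$, so $\bar y=y_0$, and therefore $y_n\to y_0$.

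The step I expect to require the most care is reconciling the literal $\min_{\Omega}$ with what the trapping argument actually delivers, namely a minimizer of $f_n-\phi$ over the compact ball $\overline{B}(y_0,r)$, i.e. a local minimizer near $y_0$. This is exactly the object the viscosity machinery consumes, since a test function touching $u$ at $y_0$ is compared with $u$ only near $y_0$, and the strictness of the minimum of $g$ is precisely what makes the scheme work: it produces on the sphere a definite gap $m>0$ that the small perturbation $f_n-f$ cannot overcome, preventing the minimizers from drifting toward $\partial\Omega$. Promoting these to global minimizers over all of $\Omega$ would need, in addition, that $g$ stay bounded away from $0$ on $\Omega\setminus B(y_0,r)$; absent such boundary control, the identity should be read with the minimum taken over a fixed neighborhood of $y_0$, which is all the subsequent viscosity arguments actually require.
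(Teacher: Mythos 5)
The paper does not actually prove this lemma: it is quoted from \cite{Lq}, so there is no in-paper argument to compare against. Your proof is correct and is the standard stability-of-strict-minima argument: localize to a ball $\overline{B}(y_0,r)\subset\Omega$, use the gap $m:=\min_{\partial B(y_0,r)}(f-\phi)>0$ (available because $f-\phi$ is continuous, vanishes at $y_0$, and is strictly positive elsewhere) to trap a minimizer $y_n$ of $f_n-\phi$ over $\overline{B}(y_0,r)$ in the open ball once $\Vert f_n-f\Vert_\infty<m/2$, and then run the usual subsequence argument to conclude $y_n\to y_0$. You are also right on both points where you flag the statement as imprecise: the left-hand side of the displayed identity should read $f_{n_j}(y_{n_j})-\phi(y_{n_j})$, and the minimum should be understood over a compact neighborhood of $y_0$ rather than literally over all of $\Omega$, since $\phi\in C^{2}(\Omega)$ gives no control near $\partial\Omega$ and $f-\phi$ need not stay bounded away from zero there. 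Your localized reading is exactly the form in which the lemma is used in the proof of Theorem \ref{diric}, where the minimum is taken over $B_{\epsilon}(\xi)$, so nothing is lost for the intended application.
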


From now on, $u_{\infty}$ and $x_{\ast}$ are as in Theorem \ref{uinf}.

\begin{theorem}
\label{diric}The function $u_{\infty}$ satisfies%
\begin{equation}
\left\{
\begin{array}
[c]{ll}%
\Delta_{\infty}v=0 & \text{in }\Omega\backslash\left\{  x_{\ast}\right\} \\
v=\frac{\rho}{\left\Vert \rho\right\Vert _{\infty}} & \text{on }\left\{
x_{\ast}\right\}  \cup\partial\Omega
\end{array}
\right.  \label{infdiric}%
\end{equation}
in the viscosity sense.
\end{theorem}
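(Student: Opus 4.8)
The plan is to verify the two boundary conditions first and then establish the interior PDE in the viscosity sense. The boundary data on $\partial\Omega$ is immediate: since $u_\infty \in W_0^{1,\infty}(\Omega)\cap C(\overline{\Omega})$, we have $u_\infty = 0 = \rho/\left\Vert\rho\right\Vert_\infty$ on $\partial\Omega$. The condition at the single point $x_\ast$ is exactly item (v) of Theorem \ref{uinf}, which gives $u_\infty(x_\ast) = 1 = \rho(x_\ast)/\left\Vert\rho\right\Vert_\infty$. So the real content is showing $\Delta_\infty u_\infty = 0$ in the viscosity sense on the punctured domain $\Omega\backslash\{x_\ast\}$.

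For the interior equation I would exploit the convergence $u_{p_n}\to u_\infty$ uniformly on $\overline{\Omega}$ together with the fact (from Theorem \ref{teomain3}(i)) that each $u_{p_n}$ is $p_n$-harmonic in $\Omega\backslash\{x_{p_n}\}$, hence $\Delta_{p_n} u_{p_n}=0$ there in the weak sense and therefore, by the first Lemma attributed to \cite{Lq}, also in the viscosity sense. The strategy is the standard viscosity-stability argument. To check superharmonicity, fix $x_0 \in \Omega\backslash\{x_\ast\}$ and $\phi\in C^2(\Omega)$ touching $u_\infty$ from below at $x_0$ with $\nabla\phi(x_0)\neq 0$; by Lemma \ref{calc} there are points $y_{n_j}\to x_0$ at which $u_{p_{n_j}}-\phi$ attains its minimum over $\Omega$. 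Since $x_0\neq x_\ast$ and $x_{p_n}\to x_\ast$, for $j$ large the points $y_{n_j}$ lie in the region where $u_{p_{n_j}}$ is $p_{n_j}$-harmonic, so the viscosity superharmonicity inequality gives
\[
\left\vert\nabla\phi(y_{n_j})\right\vert^{p_{n_j}-4}\left\{\left\vert\nabla\phi(y_{n_j})\right\vert^2\Delta\phi(y_{n_j})+(p_{n_j}-2)\Delta_\infty\phi(y_{n_j})\right\}\leq 0.
\]
Dividing through by the factor $(p_{n_j}-2)\left\vert\nabla\phi(y_{n_j})\right\vert^{p_{n_j}-4}$ (legitimate once $\nabla\phi$ is bounded away from zero near $x_0$) and letting $j\to\infty$, the term $\frac{\left\vert\nabla\phi\right\vert^2\Delta\phi}{p_{n_j}-2}$ vanishes while $\Delta_\infty\phi(y_{n_j})\to\Delta_\infty\phi(x_0)$ by continuity, yielding $\Delta_\infty\phi(x_0)\leq 0$. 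The symmetric argument with $\phi$ touching from above gives $\Delta_\infty\phi(x_0)\geq 0$, establishing that $u_\infty$ is $\infty$-harmonic at $x_0$.

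The main obstacle is the degenerate case $\nabla\phi(x_0)=0$, where dividing by $\left\vert\nabla\phi(y_{n_j})\right\vert^{p_{n_j}-4}$ is not permitted and the passage to the limit is delicate. I would handle this by invoking the now-standard observation (as in \cite{Lq}) that the limiting operator must be interpreted so that the inequality $\Delta_\infty\phi(x_0)\leq 0$ is required only when $\nabla\phi(x_0)\neq 0$; when the test function has a critical point at $x_0$ one either appeals to the fact that the $\infty$-Laplacian admits the natural convention making the viscosity inequality automatically satisfied, or one perturbs $\phi$ by adding $\varepsilon\left\vert x-x_0\right\vert$-type corrections and passes $\varepsilon\to 0$. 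A secondary technical point is ensuring that the factor $\left\vert\nabla\phi(y_{n_j})\right\vert^{p_{n_j}-4}$ does not cause the inequality to flip sign: since this factor is positive wherever the gradient is nonzero, the sign of the bracketed quantity is preserved, so no sign issue arises in the nondegenerate regime. Once both one-sided inequalities are in hand for all admissible test functions, $u_\infty$ is by definition a viscosity solution of $\Delta_\infty u_\infty=0$ on $\Omega\backslash\{x_\ast\}$, which together with the two boundary identities completes the proof of \eqref{infdiric}.
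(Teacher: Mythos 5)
Your proposal follows essentially the same route as the paper's proof: boundary data from Theorem \ref{uinf}(v), then a viscosity-stability argument using Lemma \ref{calc} and the $p_n$-harmonicity of $u_{p_n}$ away from $x_{p_n}$, dividing the inequality by $(p_{n_j}-2)\left\vert \nabla\phi(y_{n_j})\right\vert ^{p_{n_j}-4}$ and letting $j\rightarrow\infty$. Two small remarks: the degenerate case $\nabla\phi(x_{0})=0$ is not an obstacle at all, since then $\Delta_{\infty}\phi(x_{0})=\sum_{i,j}\frac{\partial\phi}{\partial x_{i}}(x_{0})\frac{\partial\phi}{\partial x_{j}}(x_{0})\frac{\partial^{2}\phi}{\partial x_{i}\partial x_{j}}(x_{0})=0$ and the required inequality holds trivially (which is exactly how the paper disposes of it); and, because Definition \ref{def3} demands strict touching while Lemma \ref{calc} only yields a minimizer $y_{n_j}$ of $u_{p_{n_j}}-\phi$, you should replace $\phi+m_{j}$ by $\psi(x)=\phi(x)+m_{j}-\left\vert x-y_{n_j}\right\vert ^{4}$, which touches $u_{p_{n_j}}$ strictly from below at $y_{n_j}$ without altering $\nabla\phi$ or $D^{2}\phi$ there --- the paper inserts precisely this correction.
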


\begin{proof}
Since $u_{\infty}=\frac{\rho}{\left\Vert \rho\right\Vert _{\infty}}$ on
$\left\{  x_{\ast}\right\}  \cup\partial\Omega,$ it remains to check that
$\Delta_{\infty}u_{\infty}=0$ in $\Omega\backslash\left\{  x_{\ast}\right\}
.$ Let $\xi\in\Omega\backslash\left\{  x_{\ast}\right\}  $ and take $\phi\in
C^{2}(\Omega\backslash\left\{  x_{\ast}\right\}  )$ touching $u_{\infty}$ from
below at $\xi.$ Thus,
\[
\phi(x)-u_{\infty}(x)<0=\phi(\xi)-u_{\infty}(\xi),\text{ if }x\not =\xi.
\]

If $\left\vert \nabla\phi(\xi)\right\vert =0$ then we readily obtain
\[
\Delta_{\infty}\phi(\xi)=\sum_{i,j=1}^{N}\frac{\partial\phi}{\partial x_{i}%
}(\xi)\frac{\partial\phi}{\partial x_{j}}(\xi)\frac{\partial^{2}\phi}{\partial
x_{i}\partial x_{j}}(\xi)=0.
\]

Otherwise, if $\left\vert \nabla\phi(\xi)\right\vert \not =0$ let us take a
ball $B_{\epsilon}(\xi)\subset\Omega\backslash\left\{  x_{\ast}\right\}  $
such that $\left\vert \nabla\phi\right\vert >0$ in $B_{\epsilon}(\xi).$ Let
$n_{0}>N$ be such that $x_{p_{n}}\not \in B_{\epsilon}(\xi)$ for all
$n>n_{0}.$ This is possible because $x_{p_{n}}\rightarrow x_{\ast}\not =\xi.$
It follows that $u_{p_{n}}$ is $p_{n}$-harmonic in $B_{\epsilon}(\xi)$ in the
viscosity sense.

According Lemma \ref{calc}, let $\left\{  \xi_{n_{j}}\right\}  \subset
B_{\epsilon}(\xi)$ such that $\xi_{n_{j}}\rightarrow\xi$ and
\[
m_{j}:=\min_{B_{\epsilon}(\xi)}\left\{  u_{p_{n_{j}}}-\phi\right\}
=u_{p_{n_{j}}}(\xi_{n_{j}})-\phi(\xi_{n_{j}})\leq u_{p_{n_{j}}}(x)-\phi
(x),\text{ \ }x\not =\xi_{n_{j}}.
\]
The function $\psi(x):=\phi(x)+m_{j}-\left\vert x-\xi_{n_{j}}\right\vert ^{4}$
belongs to $C^{2}(B_{\epsilon}(\xi))$ and touches $u_{n_{j}}$ from below at
$\xi_{n_{j}}.$ Indeed,
\begin{align*}
\psi(x)-u_{p_{n_{j}}}(x)  &  =\phi(x)-u_{p_{n_{j}}}(x)+m_{j}-\left\vert
x-\xi_{n_{j}}\right\vert ^{4}\\
&  \leq-\left\vert x-\xi_{n_{j}}\right\vert ^{4}<0=\psi(\xi_{n_{j}%
})-u_{p_{n_{j}}}(\xi_{n_{j}}),\text{ \ }x\not =\xi_{n_{j}}.
\end{align*}
Thus, $\Delta_{p_{n_{j}}}\psi(\xi_{n_{j}})\leq0,$ since $u_{p_{n_{j}}}$ is
$p_{n_{j}}$-harmonic in $B_{\epsilon}(\xi).$ Hence,%
\[
0\geq\Delta_{p_{n_{j}}}\psi(\xi_{n_{j}})=\left\vert \nabla\psi(\xi_{n_{j}%
})\right\vert ^{p_{n_{j}}-4}\left\{  \left\vert \nabla\psi(\xi_{n_{j}%
})\right\vert ^{2}\Delta\psi(\xi_{n_{j}})+(p_{n_{j}}-2)\Delta_{\infty}\psi
(\xi_{n_{j}})\right\}
\]
from which we obtain%
\[
\Delta_{\infty}\phi(\xi_{n_{j}})=\Delta_{\infty}\psi(\xi_{n_{j}})\leq
-\frac{\left\vert \nabla\psi(\xi_{n_{j}})\right\vert ^{2}}{p_{n_{j}}-2}%
\Delta\psi(\xi_{n_{j}}).
\]
So, by making $j\rightarrow\infty$ we conclude that $\Delta_{\infty}\phi
(\xi)\leq0.$

We have proved that $u_{\infty}$ is $\infty$-superharmonic in $\Omega
\backslash\left\{  x_{\ast}\right\}  ,$ in the viscosity sense. Analogously,
we can prove that $u_{\infty}$ is also $\infty$-subharmonic in $\Omega
\backslash\left\{  x_{\ast}\right\}  ,$ in the viscosity sense.
\end{proof}

We recall that $u_{\infty}$ is the only solution of the Dirichlet problem
(\ref{infdiric}). This uniqueness result is a consequence of the following
comparison principle (see \cite{BB, Jensen}):

\begin{theorem}
[Comparison Principle]Let $D$ be a bounded domain and let $u,v\in
C(\overline{D})$ satisfying $\Delta_{\infty}u\geq0$ in $D$ and $\Delta
_{\infty}v\leq0$ in $D.$ If $u\leq v$ on $\partial D,$ then $u\leq v$ in $D.$
\end{theorem}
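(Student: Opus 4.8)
The plan is to argue by contradiction, following the viscosity-theoretic strategy of \cite{Jensen, BB}. Suppose $u\le v$ on $\partial D$ but $\max_{\overline{D}}(u-v)=:M>0$. By continuity of $u-v$ on the compact set $\overline{D}$ together with the boundary hypothesis, this positive maximum is attained at an interior point. Since $u$ and $v$ need not be differentiable, the first step is the standard doubling-of-variables device: for each $j\in\mathbb{N}$ put
\[
M_j:=\max_{(x,y)\in\overline{D}\times\overline{D}}\Big(u(x)-v(y)-\tfrac{j}{2}\left\vert x-y\right\vert^2\Big),
\]
pick a maximizing pair $(x_j,y_j)$, and recall the classical facts that $M_j\downarrow M$, that $j\left\vert x_j-y_j\right\vert^2\to 0$, and that, along a subsequence, $x_j,y_j\to\hat{x}$ with $\hat{x}$ an interior maximum point of $u-v$; in particular $x_j,y_j\in D$ for $j$ large.

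Next I would invoke the Crandall--Ishii lemma to produce, for each large $j$, symmetric matrices $X_j\le Y_j$ with $(p_j,X_j)\in\overline{J}^{2,+}u(x_j)$ and $(p_j,Y_j)\in\overline{J}^{2,-}v(y_j)$, where $p_j:=j(x_j-y_j)$ is the common first-order slot. Feeding these jets into the viscosity inequalities $\Delta_{\infty}u\ge 0$ and $\Delta_{\infty}v\le 0$ yields
\[
\left\langle X_j p_j,p_j\right\rangle\ge 0 \quad\text{and}\quad \left\langle Y_j p_j,p_j\right\rangle\le 0,
\]
while $X_j\le Y_j$ forces $\left\langle X_j p_j,p_j\right\rangle\le\left\langle Y_j p_j,p_j\right\rangle$. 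The resulting chain $0\le\left\langle X_j p_j,p_j\right\rangle\le\left\langle Y_j p_j,p_j\right\rangle\le 0$ collapses to an equality and gives \emph{no} contradiction. This is exactly where the degeneracy of $\Delta_{\infty}$ shows itself, and it is why its comparison principle is far subtler than for uniformly elliptic operators.

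To break the tie one must perturb $u$ into a strict subsolution. The heart of the argument is to replace $u$ by a family $u_\delta\to u$ (uniformly) satisfying $\Delta_{\infty}u_\delta\ge\theta_\delta>0$ in the viscosity sense, while keeping $u_\delta\le v$ on $\partial D$ for $\delta$ small (possible because the gap $M>0$ is strict). The chain would then upgrade to $\theta_\delta\le\left\langle X_j p_j,p_j\right\rangle\le\left\langle Y_j p_j,p_j\right\rangle\le 0$, the desired contradiction, provided the slope $p_j$ stays bounded away from $0$. Equivalently, one may regularize by sup- and inf-convolutions
\[
u^{\varepsilon}(x)=\sup_{z\in\overline{D}}\Big(u(z)-\tfrac{1}{2\varepsilon}\left\vert x-z\right\vert^2\Big), \qquad v_{\varepsilon}(x)=\inf_{z\in\overline{D}}\Big(v(z)+\tfrac{1}{2\varepsilon}\left\vert x-z\right\vert^2\Big),
\]
which preserve the sub/supersolution property, are respectively semiconvex and semiconcave, and are therefore twice differentiable almost everywhere by Alexandrov's theorem; at an interior maximum of $u^{\varepsilon}-v_{\varepsilon}$ one applies Jensen's maximum-principle lemma to find nearby points of twice differentiability where $\nabla u^{\varepsilon}=\nabla v_{\varepsilon}$ and $D^2u^{\varepsilon}\le D^2 v_{\varepsilon}$, and reads off the same contradiction.

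The main obstacle, and the genuinely delicate point, is precisely the case in which the common gradient vanishes (that is, $p_j\to 0$, or $\nabla u^{\varepsilon}=0$ at the relevant point): there the strict positivity $\theta_\delta$ cannot be exploited, since $\Delta_{\infty}$ conveys no information when the gradient is zero. Handling it requires either Jensen's careful construction of the perturbation together with an analysis of the approximating equations $\max\{\left\vert\nabla u\right\vert-k,\ \Delta_{\infty}u\}=0$, or, alternatively, the characterization of $\infty$-subharmonicity by comparison with cones of Crandall--Evans--Gariepy, which encodes the degenerate directions geometrically and rules out an interior maximum of $u-v$ directly. As this comparison principle is classical and fully established in \cite{Jensen, BB}, I would ultimately cite it; the outline above indicates the route and isolates the vanishing-gradient degeneracy as the crux.
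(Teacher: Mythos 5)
The paper does not prove this theorem at all: it is quoted as a known result with a pointer to \cite{Jensen, BB}, so your decision to ultimately cite those same references is exactly the paper's approach. Your preliminary sketch is an accurate account of the standard viscosity argument --- correctly noting that the naive doubling-of-variables/Crandall--Ishii chain closes without contradiction and that the real work lies in the strict-subsolution perturbation and the vanishing-gradient degeneracy --- but it is a roadmap rather than a proof, which is fine here since the paper supplies none either.
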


\begin{theorem}
\label{unic}The function $u_{\infty}$ is strictly positive in $\Omega$ and
attains its maximum value $1$ only at $x_{\ast}.$
\end{theorem}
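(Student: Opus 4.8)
The plan is to lean entirely on the fact, already secured in Theorem \ref{diric}, that $u_{\infty}$ is $\infty$-harmonic in the viscosity sense on the punctured domain $\Omega\backslash\left\{ x_{\ast}\right\} $, combined with the strong maximum principle for infinity-harmonic functions. The latter is a standard consequence of the characterization of such functions by comparison with cones (see \cite{CEG, Lq}): a non-constant $\infty$-harmonic function on a connected open set can attain neither its maximum nor its minimum at an interior point. The preliminary observation I would record is that, since $N\geq2$, removing the single point $x_{\ast}$ leaves $\Omega\backslash\left\{ x_{\ast}\right\} $ open and connected, so this strong principle applies on the whole punctured domain, and that $u_{\infty}\in C(\overline{\Omega})$ with $u_{\infty}(x_{\ast})=1$ by Theorem \ref{uinf}.

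For the strict positivity I would argue by contradiction. By Theorem \ref{uinf}(iii) and (v) we already have $u_{\infty}\geq0$ in $\Omega$ and $u_{\infty}(x_{\ast})=1$. Suppose $u_{\infty}(x_{0})=0$ for some $x_{0}\in\Omega$; then necessarily $x_{0}\neq x_{\ast}$, so $x_{0}\in\Omega\backslash\left\{ x_{\ast}\right\} $ and, because $u_{\infty}\geq0$, the point $x_{0}$ is an interior minimum of the $\infty$-harmonic function $u_{\infty}$ on the connected set $\Omega\backslash\left\{ x_{\ast}\right\} $. The strong minimum principle then forces $u_{\infty}\equiv0$ throughout $\Omega\backslash\left\{ x_{\ast}\right\} $, and continuity of $u_{\infty}$ at $x_{\ast}$ would give $u_{\infty}(x_{\ast})=0$, contradicting $u_{\infty}(x_{\ast})=1$. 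Hence $u_{\infty}>0$ everywhere in $\Omega$.

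For the uniqueness of the maximum point I would run the symmetric argument. We know $u_{\infty}$ attains the value $1=\left\Vert u_{\infty}\right\Vert _{\infty}$ at $x_{\ast}$. If it also attained $1$ at some $x_{1}\in\Omega$ with $x_{1}\neq x_{\ast}$, then $x_{1}\in\Omega\backslash\left\{ x_{\ast}\right\} $ would be an interior maximum point of the $\infty$-harmonic function $u_{\infty}$, so the strong maximum principle would force $u_{\infty}\equiv1$ on $\Omega\backslash\left\{ x_{\ast}\right\} $. This is incompatible with $u_{\infty}\in C(\overline{\Omega})$ vanishing on $\partial\Omega$. Thus $x_{\ast}$ is the only point at which $u_{\infty}=1$, which completes the statement.

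The hard part is not the bookkeeping but having the \emph{strong} maximum principle available in the viscosity framework rather than merely the classical one; I would therefore be careful to invoke the cone-comparison characterization of $\infty$-harmonic functions from \cite{CEG, Lq}, which delivers exactly the propagation of interior extrema that both parts of the argument require. Everything else—the connectedness of $\Omega\backslash\left\{ x_{\ast}\right\} $ for $N\geq2$, the sign information $u_{\infty}\geq0$, and the boundary/continuity data—has already been established in Theorems \ref{uinf} and \ref{diric} and enters only as routine input.
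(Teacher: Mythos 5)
Your argument is correct, and the first half is essentially the paper's: the paper also shows $u_{\infty}>0$ by noting that the Harnack inequality for nonnegative $\infty$-harmonic functions makes the zero set $Z_{\infty}=\{x\in\Omega:u_{\infty}(x)=0\}$ open, and then concludes by connectedness — which is exactly the strong minimum principle you invoke, just unpacked. Where you genuinely diverge is in the uniqueness of the maximum point. The paper does not appeal to a strong maximum principle at all; instead it builds the explicit cone $v(x):=1-\frac{1}{m}\left\vert x-x_{\ast}\right\vert$ with $m:=\max\{\left\vert x-x_{\ast}\right\vert:x\in\partial\Omega\}$, checks that $\Delta_{\infty}v=0$ in $\Omega\backslash\{x_{\ast}\}$ and $v\geq u_{\infty}$ on $\partial\Omega\cup\{x_{\ast}\}$, and applies the Jensen--Barles--Busca comparison principle (which the paper has already stated) to get $u_{\infty}(x)\leq 1-\frac{1}{m}\left\vert x-x_{\ast}\right\vert<1$ off $x_{\ast}$. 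That route yields a quantitative linear decay of $u_{\infty}$ away from $x_{\ast}$, strictly more than uniqueness of the maximizer, and it uses only tools the paper has explicitly put on the table. Your route is softer and shorter but leans on the strong maximum principle for viscosity $\infty$-harmonic functions, which the paper never states; it is a standard consequence of comparison with cones (or of Harnack applied to $1-u_{\infty}$, which is again $\infty$-harmonic), so your proof is sound provided you cite that result explicitly rather than only the comparison-with-cones characterization from which it must still be derived. You are also right to flag the connectedness of $\Omega\backslash\{x_{\ast}\}$ for $N\geq2$, a point the paper uses silently.
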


\begin{proof}
Let $D:=\Omega\backslash\left\{  x_{\ast}\right\}  .$ Since $u_{\infty
}(x_{\ast})>0$ and $u_{\infty}$ is nonnegative and $\infty$-harmonic in $D,$
it follows from the Harnack inequality for the infinity harmonic functions
(see \cite{LqManf}) that $Z_{\infty}:=\left\{  x\in\Omega:u_{\infty
}(x)=0\right\}  $ is open in $\Omega.$ Since $Z_{\infty}$ is also closed and
$Z_{\infty}\not =\Omega,$ we conclude that $Z_{\infty}$ is empty, so that
$u>0$ in $\Omega.$

Let $m:=\max\left\{  \left\vert x-x_{\ast}\right\vert :x\in\partial
\Omega\right\}  $ and $v(x):=1-\frac{1}{m}\left\vert x-x_{\ast}\right\vert ,$
$x\in\Omega.$ It is easy to check that $\Delta_{\infty}v=0$ in $D$ and that
$v\geq u_{\infty}$ on $\partial D=\left\{  x_{\ast}\right\}  \cup
\partial\Omega.$ Therefore, by the comparison principle above, we have
\[
u_{\infty}(x)\leq v(x)=1-\frac{1}{m}\left\vert x-x_{\ast}\right\vert
<1=\left\Vert u_{\infty}\right\Vert _{\infty},\text{ \ for all }x\in
\Omega\backslash\left\{  x_{\ast}\right\}  .
\]

\end{proof}

Since $x_{\ast}$ is also a maximum point of the distance function $\rho$, an
immediate consequence of the previous theorem is that if $\Omega$ is such that
$\rho$ has a unique maximum point, then the family $\left\{  u_{p}\right\}
_{p>N}$ converges, as $p\rightarrow\infty,$ to the unique solution $u_{\infty
}$ of the Dirichlet problem (\ref{infdiric}). However, this property of
$\Omega$ alone does not assure that $u_{\infty}=\frac{\rho}{\left\Vert
\rho\right\Vert _{\infty}}.$ For example, for the square $S=\left\{
(x,y)\in\mathbb{R}^{2}:\left\vert x\right\vert +\left\vert y\right\vert
<1\right\}  $ the origin is the unique maximum point of the distance function
$\rho,$ but one can check from \cite[Proposition 4.1]{ARMA99} that $\rho$ is
not $\infty$-harmonic at the points of $\Omega$ on the coordinate axes. As a
matter of fact, for a general bounded domain $\Omega$ the distance function
fails to be $\infty$-harmonic exactly on the \textit{ridge of }$\Omega,$ the
set $\mathcal{R}(\Omega)$ of all points in $\Omega$ whose distance to the
boundary is reached at least at two points in $\partial\Omega.$ This
well-known fact can be proved by combining Corollaries 3.4 and 4.4 of
\cite{CEG}, as pointed out in \cite[Lemma 2.6]{Yu}. Note that $\mathcal{R}(S)$
is set of the points in $S$ that are on the coordinate axes. As we will see in
the sequel, the complementary condition to guarantee that $u_{\infty}%
=\frac{\rho}{\left\Vert \rho\right\Vert _{\infty}}$ is $\mathcal{R}%
(\Omega)=\left\{  x_{0}\right\}  ,$ where $x_{0}$ denotes the unique maximum
point of $\rho.$

\begin{theorem}
\label{u=dist}One has $u_{\infty}=\frac{\rho}{\left\Vert \rho\right\Vert
_{\infty}}$ in $\overline{\Omega}$ if, and only if:
\end{theorem}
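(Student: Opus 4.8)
The plan is to reduce the equivalence to the known geometric characterization of when the distance function is $\infty$-harmonic. The starting observation is that, by Theorem \ref{diric} combined with the comparison principle stated above, $u_{\infty}$ is \emph{the} unique viscosity solution of the Dirichlet problem (\ref{infdiric}), whose prescribed boundary data on $\{x_{\ast}\}\cup\partial\Omega$ is exactly $\frac{\rho}{\|\rho\|_{\infty}}$. Since the function $\frac{\rho}{\|\rho\|_{\infty}}$ trivially realizes this same boundary data (it vanishes on $\partial\Omega$ and equals $1$ at $x_{\ast}$, because $\rho(x_{\ast})=\|\rho\|_{\infty}$ by Theorem \ref{uinf}(v)), uniqueness shows that $u_{\infty}=\frac{\rho}{\|\rho\|_{\infty}}$ in $\overline{\Omega}$ \emph{if and only if} $\frac{\rho}{\|\rho\|_{\infty}}$ is itself a viscosity solution of (\ref{infdiric}). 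As $\infty$-harmonicity is invariant under multiplication by a positive constant, this is equivalent to $\rho$ being $\infty$-harmonic in the punctured domain $\Omega\setminus\{x_{\ast}\}$.

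Next I would invoke the characterization recalled just before the statement: by combining Corollaries 3.4 and 4.4 of \cite{CEG} (see also \cite[Lemma 2.6]{Yu}), $\rho$ is $\infty$-harmonic precisely on $\Omega\setminus\mathcal{R}(\Omega)$ and genuinely fails to be $\infty$-harmonic at each point of the ridge $\mathcal{R}(\Omega)$. Consequently, $\rho$ is $\infty$-harmonic throughout $\Omega\setminus\{x_{\ast}\}$ if and only if $\mathcal{R}(\Omega)\subseteq\{x_{\ast}\}$.

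To finish, I would upgrade this inclusion to an equality by checking that $x_{\ast}$ always lies on the ridge. Indeed, Theorem \ref{uinf}(v) gives that $x_{\ast}$ is a maximum point of $\rho$; at an interior maximum $\rho$ cannot be differentiable, since property (P2) forces $|\nabla\rho|=1$ almost everywhere, and a point of non-differentiability of $\rho$ is by definition a point of $\mathcal{R}(\Omega)$. Hence $x_{\ast}\in\mathcal{R}(\Omega)$, so $\mathcal{R}(\Omega)\subseteq\{x_{\ast}\}$ is the same as $\mathcal{R}(\Omega)=\{x_{\ast}\}$. Chaining the three equivalences delivers the asserted characterization: $u_{\infty}=\frac{\rho}{\|\rho\|_{\infty}}$ in $\overline{\Omega}$ if and only if $\mathcal{R}(\Omega)=\{x_{\ast}\}$.

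The routine ingredients---the scaling invariance of $\infty$-harmonicity and the matching of the boundary data---are immediate. The main obstacle, which I would handle by quoting the cited results in exactly the form needed rather than by asserting it, is the passage between ``$\rho$ fails to be $\infty$-harmonic on $\mathcal{R}(\Omega)$'', phrased in terms of viscosity sub- and supersolutions, and the clean dichotomy used above; this rests entirely on \cite{CEG, Yu}. I would also confirm the bookkeeping detail that deleting the single point $x_{\ast}$ is harmless, i.e. that the condition genuinely needed is $\mathcal{R}(\Omega)\cap(\Omega\setminus\{x_{\ast}\})=\emptyset$, which is precisely $\mathcal{R}(\Omega)\subseteq\{x_{\ast}\}$.
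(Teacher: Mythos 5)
Your proof is correct and follows essentially the same route as the paper's: both arguments rest on the uniqueness of the viscosity solution of (\ref{infdiric}) via the comparison principle, together with the fact from \cite{CEG} and \cite[Lemma 2.6]{Yu} that $\rho$ fails to be $\infty$-harmonic exactly on the ridge $\mathcal{R}(\Omega)$. The only cosmetic difference is that you state the answer as $\mathcal{R}(\Omega)=\{x_{\ast}\}$ and then need your (correct) observation that every interior maximum of $\rho$ lies on the ridge to translate this back into conditions (i)--(ii); just note that (P2), being an almost-everywhere statement, does not by itself preclude differentiability at the single point $x_{\ast}$ --- the standard fact you actually want is that a unique nearest boundary point forces $\rho$ to be differentiable there with $|\nabla\rho|=1$, contradicting interior maximality.
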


\begin{enumerate}
\item[(i)] $\rho$ has a unique maximum point, say $x_{0},$ and

\item[(ii)] for each $x\in\Omega\backslash\left\{  x_{0}\right\}  $ there
exists a unique $y_{x}\in\partial\Omega$ such that $\left\vert x-y_{x}%
\right\vert =\rho(x).$
\end{enumerate}

\begin{proof}
If $u_{\infty}=\frac{\rho}{\left\Vert \rho\right\Vert _{\infty}}$ then
$x_{\ast}$ is the only maximum point of the distance function $\rho,$
according Theorems \ref{uinf} and \ref{unic}. It follows from Theorem
\ref{diric} that $\Delta_{\infty}\rho=0$ in $\Omega\backslash\left\{  x_{\ast
}\right\}  .$ Hence, $\mathcal{R}(\Omega)=\left\{  x_{0}\right\}  ,$ which is
equivalent to $\mathrm{(ii).}$

Conversely, item $\mathrm{(i)}$ and Theorem \ref{uinf} imply that
$x_{0}=x_{\ast},$ whereas item $\mathrm{(ii)}$ implies that $\mathcal{R}%
(\Omega)=\left\{  x_{0}\right\}  .$ It follows that $\frac{\rho}{\left\Vert
\rho\right\Vert _{\infty}}$ satisfies (\ref{infdiric}). Hence, uniqueness of
the viscosity solution of this Dirichlet problem guarantees that $u_{\infty
}=\frac{\rho}{\left\Vert \rho\right\Vert _{\infty}}.$
\end{proof}

Balls, ellipses and other convex domains satisfy conditions $\mathrm{(i)}$ and
$\mathrm{(ii)}$.

\subsection{Multiplicity of minimizers of the quotient $\frac{\left\Vert
\nabla\phi\right\Vert _{\infty}}{\left\Vert \phi\right\Vert _{\infty}}$ in
$W_{0}^{1,\infty}(\Omega)\backslash\left\{  0\right\}  .$}

In this subsection we show that each maximum point $x_{0}$ of the distance
function $\rho$ gives rise to a positive function $u\in W_{0}^{1,\infty
}(\Omega)\backslash\left\{  0\right\}  $ satisfying
\begin{equation}
\left\Vert u\right\Vert _{\infty}=1\text{ \ and \ }\left\Vert \nabla
u\right\Vert _{\infty}=\frac{1}{\left\Vert \rho\right\Vert _{\infty}}%
=\min\left\{  \frac{\left\Vert \nabla\phi\right\Vert _{\infty}}{\left\Vert
\phi\right\Vert _{\infty}}:\phi\in W_{0}^{1,\infty}(\Omega)\backslash\left\{
0\right\}  \right\}  . \label{quotient}%
\end{equation}
Moreover, such a function attains its maximum value only at $x_{0}.$ In
particular, we conclude that for an annulus, there exist infinitely many
positive and nonradial functions satisfying (\ref{quotient}).

\begin{proposition}
\label{distmaxext}Let $x_{0}\in\mathbb{R}^{N}$ and let $u_{\infty}\in
C(\overline{\Omega})$ be the unique viscosity solution of the following
Dirichlet problem
\begin{equation}
\left\{
\begin{array}
[c]{ll}%
\Delta_{\infty}u=0 & \text{in }\Omega\backslash\left\{  x_{0}\right\} \\
u=0 & \text{on }\partial\Omega\\
u(x_{0})=1. &
\end{array}
\right.  \label{infdiric1}%
\end{equation}
Then,
\end{proposition}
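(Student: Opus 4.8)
The plan is to carry over the reasoning of Theorem~\ref{unic} to the solution $u_{\infty}$ of (\ref{infdiric1}) and then to identify its Lipschitz constant. Set $D:=\Omega\backslash\{x_{0}\}$. First I would note that $u_{\infty}\geq 0$ in $D$: both $u_{\infty}$ and the constant $0$ are $\infty$-harmonic in $D$ and $u_{\infty}\geq 0$ on $\partial D=\partial\Omega\cup\{x_{0}\}$, so the Comparison Principle applies. Strict positivity then follows exactly as in Theorem~\ref{unic}, the Harnack inequality for $\infty$-harmonic functions (\cite{LqManf}) forcing the zero set $\{u_{\infty}=0\}$ to be open in $\Omega$, hence (being closed and proper, since $u_{\infty}(x_{0})=1$) empty. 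For the uniqueness of the maximum point I would compare $u_{\infty}$ with the cone $v(x):=1-\tfrac{1}{m}|x-x_{0}|$, where $m:=\max\{|x-x_{0}|:x\in\partial\Omega\}$; since $\Delta_{\infty}v=0$ in $D$ and $v\geq u_{\infty}$ on $\partial D$, the Comparison Principle yields $u_{\infty}(x)\leq v(x)<1$ for every $x\neq x_{0}$.

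Next I would address the gradient norm, which is where the maximality of $\rho$ at $x_{0}$ is needed. The lower bound is immediate from property (P4): since $\|u_{\infty}\|_{\infty}=1$, one has $\|\nabla u_{\infty}\|_{\infty}\geq 1/\|\rho\|_{\infty}$. For the reverse inequality I would use that an $\infty$-harmonic function is the absolutely minimizing Lipschitz extension of its boundary values, so by comparison with cones (\cite{CEG}) the Lipschitz constant of $u_{\infty}$ on $\overline{\Omega}$ coincides with that of its boundary data $g$ on $\partial D$, where $g\equiv 0$ on $\partial\Omega$ and $g(x_{0})=1$. Since differences of $g$ on $\partial\Omega$ vanish, this boundary constant equals
\[
\sup_{y\in\partial\Omega}\frac{1}{|x_{0}-y|}=\frac{1}{\inf_{y\in\partial\Omega}|x_{0}-y|}=\frac{1}{\rho(x_{0})}=\frac{1}{\|\rho\|_{\infty}},
\]
the last step using that $x_{0}$ is a maximum point of $\rho$. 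Hence $\|\nabla u_{\infty}\|_{\infty}=1/\|\rho\|_{\infty}$, and together with (P4) this shows that $u_{\infty}$ realizes the minimum in (\ref{quotient}). To obtain in addition the pointwise bound $u_{\infty}\leq\rho/\|\rho\|_{\infty}$, I would compare $u_{\infty}$ with $\rho/\|\rho\|_{\infty}$: the distance function is a viscosity supersolution of the $\infty$-Laplacian, so this competitor is $\infty$-superharmonic in $D$ and agrees with $u_{\infty}$ on $\partial D$ (again using $\rho(x_{0})=\|\rho\|_{\infty}$), whence the comparison principle gives the claim.

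The step I expect to be most delicate is the upper estimate on the gradient. One must verify that the comparison-with-cones machinery of \cite{CEG} is legitimately applied on the punctured domain $D$, with the isolated interior value at $x_{0}$ adjoined to the boundary data, and that the resulting (intrinsic) Lipschitz constant of $u_{\infty}$ is indeed $\|\nabla u_{\infty}\|_{\infty}$; for a nonconvex $\Omega$ one should read all distances as path-length distances in $\overline{\Omega}$, noting that the infimum over $y\in\partial\Omega$ of the distance from $x_{0}$ is still attained, with value $\|\rho\|_{\infty}$, along the straight segment to a nearest boundary point. Once this identification is secured the boundary computation is routine, and it is precisely here that the maximality of $\rho$ at $x_{0}$ is indispensable: otherwise the boundary data would have Lipschitz constant $1/\rho(x_{0})>1/\|\rho\|_{\infty}$ and $u_{\infty}$ would fail to minimize the quotient.
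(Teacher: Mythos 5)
Your item (i) is handled exactly as in the paper: both you and the authors reduce it to the argument of Theorem \ref{unic} (comparison with $0$, the Harnack inequality of \cite{LqManf} to show the zero set is open and hence empty, and comparison with the cone $1-\frac{1}{m}\left\vert x-x_{0}\right\vert$). For item (ii), however, you take a genuinely different route. The paper never invokes the AMLE machinery: it realizes $u_{\infty}$ as the uniform limit of the solutions $u_{p}$ of $-\Delta_{p}u_{p}=\Lambda_{p}(\Omega)\delta_{x_{0}}$, extracts the bounds $\left\Vert \nabla U_{\infty}\right\Vert _{\infty}\leq1/\left\Vert \rho\right\Vert _{\infty}$ and $U_{\infty}\leq\rho/\left\Vert \rho\right\Vert _{\infty}$ from uniform $W_{0}^{1,r}$ estimates as in Theorem \ref{increas}, and then forces $U_{\infty}(x_{0})=1$ by testing the equation against $\rho$: the H\"{o}lder estimate $\Lambda_{p}(\Omega)\rho(x_{0})\leq\left(  \Lambda_{p}(\Omega)u_{p}(x_{0})\right)  ^{\frac{p-1}{p}}\left\vert \Omega\right\vert ^{\frac{1}{p}}$ gives $\rho(x_{0})\leq\left\Vert \rho\right\Vert _{\infty}U_{\infty}(x_{0})$ in the limit, and this is where the maximality of $\rho$ at $x_{0}$ enters; a final appeal to the argument of Theorem \ref{diric} identifies $U_{\infty}$ with $u_{\infty}$. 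You instead argue directly at $p=\infty$: the lower bound $1/\left\Vert \rho\right\Vert _{\infty}\leq\left\Vert \nabla u_{\infty}\right\Vert _{\infty}$ via (P4) is common to both proofs, but your upper bound comes from identifying the viscosity solution of (\ref{infdiric1}) with the absolutely minimizing Lipschitz extension of the two-valued boundary datum and reading off the Lipschitz constant $1/\rho(x_{0})$ of that datum. This is correct and shorter, but be aware that it leans on Jensen's theorem (the unique $\infty$-harmonic extension is a minimal Lipschitz extension, $\mathrm{Lip}(u_{\infty},\overline{D})=\mathrm{Lip}(g,\partial D)$) as a black box: comparison with cones alone yields the pointwise bounds $\left(  1-\left\vert x-x_{0}\right\vert /\left\Vert \rho\right\Vert _{\infty}\right)  ^{+}\leq u_{\infty}\leq\rho/\left\Vert \rho\right\Vert _{\infty}$ but not the sharp global gradient bound, so the citation to \cite{Jensen} and \cite{CEG} is doing real work there. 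Your closing worry about intrinsic versus Euclidean distances is harmless, since $\left\Vert \nabla u\right\Vert _{L^{\infty}(\Omega)}$ never exceeds the Euclidean Lipschitz constant, which is what the MLE property controls. What the paper's longer route buys is the extra information that $u_{\infty}=\lim_{p\rightarrow\infty}u_{p}$ and the inequality $U_{\infty}\leq\rho/\left\Vert \rho\right\Vert _{\infty}$, both of which fit the paper's overall program of obtaining everything as $p\rightarrow\infty$ limits.
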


\begin{enumerate}
\item[(i)] $0<u_{\infty}(x)<1$ for all $x\in\Omega\backslash\left\{
x_{0}\right\}  .$

\item[(ii)] if $x_{0}$ is a maximum point of the distance function $\rho,$
then $\left\Vert u_{\infty}\right\Vert _{\infty}=1$ and
\begin{equation}
\left\Vert \nabla u_{\infty}\right\Vert _{\infty}=\frac{1}{\left\Vert
\rho\right\Vert _{\infty}}. \label{minf}%
\end{equation}

\end{enumerate}

\begin{proof}
Following the proof of Theorem \ref{unic}, we obtain item $\mathrm{(i)}$ by
combining Harnack inequality and comparison principle in $D:=\Omega
\backslash\left\{  x_{0}\right\}  .$

In order to prove $\mathrm{(ii)}$ we first show that
\[
u_{\infty}=\lim_{p\rightarrow\infty}u_{p},\text{ uniformly in }\overline
{\Omega}%
\]
where%
\[
\left\{
\begin{array}
[c]{ll}%
-\Delta_{p}u_{p}=\Lambda_{p}(\Omega)\delta_{x_{0}} & \text{in }\Omega\\
u=0 & \text{on }\partial\Omega.
\end{array}
\right.
\]

It is easy to check that $-\Delta_{p}u_{p}\geq0$ in $\Omega,$ in the weak
sense. Hence, according the weak comparison principle, $u_{p}\geq0$ in
$\Omega.$

Since
\[
\Lambda_{p}(\Omega)\left\Vert u_{p}\right\Vert _{\infty}^{p}\leq\left\Vert
\nabla u_{p}\right\Vert _{p}^{p}=\Lambda_{p}(\Omega)u_{p}(x_{0})\leq
\Lambda_{p}(\Omega)\left\Vert u_{p}\right\Vert _{\infty}%
\]
we conclude that%
\[
u_{p}(x_{0})\leq\left\Vert u_{p}\right\Vert _{\infty}\leq1\text{ \ \ and
\ \ }\left\Vert \nabla u_{p}\right\Vert _{p}\leq\Lambda_{p}(\Omega)^{\frac
{1}{p}}.
\]

Let $r\in(N,p).$ Since%
\[
\left\Vert \nabla u_{p}\right\Vert _{r}\leq\left\Vert \nabla u_{p}\right\Vert
_{p}\left\vert \Omega\right\vert ^{\frac{1}{r}-\frac{1}{p}}\leq\Lambda
_{p}(\Omega)^{\frac{1}{p}}\left\vert \Omega\right\vert ^{-\frac{1}{p}%
}\left\vert \Omega\right\vert ^{\frac{1}{r}}\leq\frac{\left\vert
\Omega\right\vert ^{\frac{1}{r}}}{\left\Vert \rho\right\Vert _{\infty}},\text{
\ }p>r
\]
the family $\left\{  u_{p}\right\}  _{p>r}$ is uniformly bounded in
$W_{0}^{1,r}(\Omega).$ It follows, as in the proof of Proposition
\ref{increas}, that there exist $p_{n}\rightarrow\infty$ and $U_{\infty}\in
W_{0}^{1,\infty}(\Omega)$ such that $u_{p_{n}}\rightarrow U_{\infty}$
(strongly) in $C(\overline{\Omega})$ with
\begin{equation}
\left\Vert \nabla U_{\infty}\right\Vert _{\infty}\leq\dfrac{1}{\left\Vert
\rho\right\Vert _{\infty}}\text{ \ \ and \ \ }U_{\infty}\leq\frac{\rho
}{\left\Vert \rho\right\Vert _{\infty}}\text{ a.e. in }\Omega.\label{aux20}%
\end{equation}

Now are going to show that $U_{\infty}(x_{0})=\left\Vert U_{\infty}\right\Vert
_{\infty}=1.$

Since
\begin{align*}
\Lambda_{p}(\Omega)\rho(x_{0}) &  =\int_{\Omega}\left\vert \nabla
u_{p}\right\vert ^{p-2}\nabla u_{p}\cdot\nabla\rho\mathrm{d}x\\
&  \leq\left\Vert \nabla u_{p}\right\Vert _{p}^{p-1}\left\Vert \nabla
\rho\right\Vert _{p}=\left(  \Lambda_{p}(\Omega)u_{p}(x_{0})\right)
^{\frac{p-1}{p}}\left\vert \Omega\right\vert ^{\frac{1}{p}},
\end{align*}
we have%
\[
\rho(x_{0})\leq\Lambda_{p}(\Omega)^{-\frac{1}{p}}\left(  u_{p}(x_{0})\right)
^{\frac{p-1}{p}}\left\vert \Omega\right\vert ^{\frac{1}{p}}.
\]
Hence, after making $p\rightarrow\infty,$ we obtain%
\[
\rho(x_{0})\leq\left\Vert \rho\right\Vert _{\infty}U_{\infty}(x_{0}).
\]
The second inequality in (\ref{aux20}) then implies that $\rho(x_{0}%
)=\left\Vert \rho\right\Vert _{\infty}U_{\infty}(x_{0}).$ Thus, if $\rho
(x_{0})=\left\Vert \rho\right\Vert _{\infty}$ we conclude that $U_{\infty
}(x_{0})=1.$Therefore, (\ref{minf}) holds for $U_{\infty},$ since%
\[
\left\Vert \nabla U_{\infty}\right\Vert _{\infty}\leq\dfrac{1}{\left\Vert
\rho\right\Vert _{\infty}}\leq\frac{\left\Vert \nabla U_{\infty}\right\Vert
_{\infty}}{\left\Vert U_{\infty}\right\Vert _{\infty}}=\left\Vert \nabla
U_{\infty}\right\Vert _{\infty}.\text{ }%
\]

Repeating the arguments in the proof of Theorem \ref{diric} we can check that
$U_{\infty}$ is a viscosity solution of (\ref{infdiric1}), so that $U_{\infty
}=u_{\infty}$ and (\ref{minf}) holds true.
\end{proof}

The following corollary is an immediate consequence of Theorem
\ref{distmaxext}.

\begin{corollary}
Suppose the distance function of $\Omega$ has infinitely many maximum points.
Then, there exist infinitely many positive functions $u\in C(\overline{\Omega
})\cap W_{0}^{1,\infty}(\Omega)$ satisfying
\begin{equation}
0<u(x)\leq1=\left\Vert u\right\Vert _{\infty}\text{ \ and \ }\left\Vert \nabla
u\right\Vert _{\infty}=\min\left\{  \left\Vert \nabla\phi\right\Vert _{\infty
}:\phi\in W_{0}^{1,\infty}(\Omega)\text{ \ and \ }\left\Vert \phi\right\Vert
_{\infty}=1\right\}  . \label{extremin}%
\end{equation}
Moreover, each one of these functions assumes its maximum value $1$ only at
one point, which is also a maximum point of the distance function $\rho.$

In particular, there exist infinitely many nonradial functions satisfying
(\ref{extremin}) for the annulus $\Omega_{a,b}:=\left\{  x\in\mathbb{R}%
^{N}:0<a<\left\vert x\right\vert <b\right\}  .$
\end{corollary}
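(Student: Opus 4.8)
The plan is to manufacture one function satisfying (\ref{extremin}) for each maximum point of $\rho$ by invoking Proposition \ref{distmaxext}, and then to argue that distinct maximum points yield distinct functions. First I would fix a maximum point $x_{0}$ of the distance function $\rho$ and let $u_{\infty}^{x_{0}}\in C(\overline{\Omega})\cap W_{0}^{1,\infty}(\Omega)$ be the unique viscosity solution of (\ref{infdiric1}) associated with that choice of $x_{0}$. Part (i) of Proposition \ref{distmaxext} gives $0<u_{\infty}^{x_{0}}(x)<1$ for $x\in\Omega\backslash\{x_{0}\}$ together with $u_{\infty}^{x_{0}}(x_{0})=1$, so that $\left\Vert u_{\infty}^{x_{0}}\right\Vert_{\infty}=1$ and the maximum value $1$ is attained only at $x_{0}$. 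Since $x_{0}$ is a maximum point of $\rho$, part (ii) yields $\left\Vert \nabla u_{\infty}^{x_{0}}\right\Vert_{\infty}=1/\left\Vert \rho\right\Vert_{\infty}$. Recalling (\ref{minRay}) (equivalently property (P4)), normalized to $\left\Vert \phi\right\Vert_{\infty}=1$, one has $1/\left\Vert \rho\right\Vert_{\infty}=\min\{\left\Vert \nabla\phi\right\Vert_{\infty}:\phi\in W_{0}^{1,\infty}(\Omega),\ \left\Vert \phi\right\Vert_{\infty}=1\}$; hence $u_{\infty}^{x_{0}}$ satisfies (\ref{extremin}) and attains its maximum only at the single point $x_{0}$, which is a maximum point of $\rho$.

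Next I would establish the multiplicity through the injectivity of the assignment $x_{0}\mapsto u_{\infty}^{x_{0}}$. If $x_{0}\neq x_{0}^{\prime}$ are two maximum points of $\rho$, then $u_{\infty}^{x_{0}}$ and $u_{\infty}^{x_{0}^{\prime}}$ attain their common maximum value $1$ only at $x_{0}$ and only at $x_{0}^{\prime}$ respectively, so the two functions must be distinct. Therefore, under the hypothesis that $\rho$ has infinitely many maximum points, the family $\{u_{\infty}^{x_{0}}\}$ indexed by these points furnishes infinitely many distinct functions satisfying (\ref{extremin}), each attaining its maximum value $1$ at exactly one point, which is a maximum point of $\rho$. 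This proves the main assertion.

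Finally, for the annulus $\Omega_{a,b}$ I would identify the maximum set of $\rho$. Here $\rho(x)=\min(\left\vert x\right\vert-a,\ b-\left\vert x\right\vert)$, which is maximized precisely on the sphere $\{\left\vert x\right\vert=(a+b)/2\}$, with maximal value $(b-a)/2$; this is an infinite set, so the main assertion already produces infinitely many functions satisfying (\ref{extremin}). To see that they are nonradial, observe that a radial function would be constant on the sphere $\{\left\vert x\right\vert=(a+b)/2\}$, so if it attained its maximum at one point of that sphere it would attain it at every point of the sphere; but each $u_{\infty}^{x_{0}}$ attains its maximum only at the single point $x_{0}$ lying on this sphere. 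Hence each of these infinitely many functions is nonradial.

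I expect the only real subtlety to be bookkeeping: essentially all of the analytic content—existence and uniqueness, strict positivity, the sharp gradient bound, and the single-point maximum—is already packaged inside Proposition \ref{distmaxext}, so the corollary reduces to the elementary injectivity argument of the second paragraph and the explicit computation of the maximum set of $\rho$ for the annulus in the third.
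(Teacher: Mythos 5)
Your proposal is correct and is essentially the argument the paper intends: the paper itself gives no written proof, stating only that the corollary is "an immediate consequence" of Proposition \ref{distmaxext}, and your writeup (one solution $u_{\infty}^{x_{0}}$ per maximum point of $\rho$ via the proposition, distinctness from the single-point maximum, and the explicit maximum set $\{\left\vert x\right\vert=(a+b)/2\}$ with the radial-symmetry contradiction for the annulus) is exactly the natural filling-in of that claim. No gaps.
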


\section{Acknowledgments}

The first author thanks the support of Funda\c{c}\~{a}o de Amparo \`{a}
Pesquisa do Estado de Minas Gerais (Fapemig)/Brazil (CEX-PPM-00165) and
Conselho Nacional de Desenvolvimento Cient\'{\i}fico e Tecnol\'{o}gico
(CNPq)/Brazil (483970/2013-1 and 306590/2014-0).


\begin{thebibliography}{99}                                                                                               %


\bibitem {Aron}G. Aronsson: Extension of functions satisfying Lipschitz
conditions. \textit{Arkiv f\"{o}r Matematik} \textbf{6} (1967) 551--561.

\bibitem {Auban}T. Aubin: Probl\`{e}mes isop\'{e}rim\'{e}triques et espaces de
Sobolev, \textit{J. Differ. Geom.} \textbf{11} (1976) 573--598.

\bibitem {BB}G. Barles, J. Busca: Existence and comparison results for fully
nonlinear degenerate elliptic equations without zeroth-order term,
\textit{Comm. PDE} \textbf{26} (2001) 2323--2337.

\bibitem {BDM}T. Bhatthacharya, E. Dibenedetto and J. Manfredi: Limits as
$p\rightarrow\infty$ of $\Delta_{p}u_{p}=f$ and related extremal problems,
\textit{Rendiconti del Sem. Mat., Fascicolo Speciale Non Linear PDE's, Univ.
Torino} (1989) 15--68.

\bibitem {Brothers}J. E. Brothers, W. P. Ziemer: Minimal rearrangements of
Sobolev functions, \textit{J. Reine Angew. Math.} \textbf{384 }(1988) 153--179.

\bibitem {Cr}M. G. Crandall: \textquotedblleft A visit with the $\infty
$-Laplace equation.\textquotedblright\ In Calculus of Variations and Nonlinear
Partial Differential Equations. Lecture Notes in Mathematics \ vol. 1927.
Berlin: Springer, 2008.

\bibitem {CEG}M. G. Crandall, L. C. Evans, R. F. Gariepy: Optimal Lipschitz
extensions and the infinity Laplacian, \textit{Calc. Var. Partial Differential
Equations}, \textbf{13} (2001) 123--139.

\bibitem {GEJMAA}G. Ercole: Absolute continuity of the best Sobolev constant.
\textit{J. Math. Anal. Appl.} \textbf{404} (2013) 420--428.

\bibitem {GECCM}G. Ercole: On the resonant Lane-Emden problem for the
p-Laplacian. \textit{Comm. Contemp. Math. \textbf{16 }}(2014) 1350023 (22
pages).\textit{\ }

\bibitem {Fuka}N. Fukagai, M. Ito, K. Narukawa: Limit as $p\rightarrow\infty$
of $p$-Laplace eigenvalue problems and $L^{\infty}$-inequality of the
Poincar\'{e} type, \textit{Diff. Int. Equ.} \textbf{12} (1999) 183--206.

\bibitem {Jensen}R. Jensen: Uniqueness of Lipschitz extensions: minimizing the
sup norm of the gradient, \textit{Arch. Rational Mech. Anal.} \textbf{123}
(1993) 51--74 .

\bibitem {ARMA99}P. Juutinen, P. Lindqvist, J. Manfredi: The $\infty
$-eigenvalue problem, \textit{Arch. Rat. Mech. Anal.} \textbf{148} (1999) 89--105.

\bibitem {Kawohl}B. Kawohl: Rearrangements and Convexity of Level Sets in PDE,
Lecture Notes in Math., vol. 1150, Springer-Verlag, Berlin, 1985.

\bibitem {Lieberman}G. M. Lieberman: Boundary regularity for solutions of
degenerate elliptic equations. \textit{Nonlinear Anal.} \textbf{12} (1988) 1203--1219.

\bibitem {Lq}P. Lindqvist: Notes on the Infinity-Laplace Equation,
arXiv:1411.1278v2 [math.AP].

\bibitem {Lindvist}P. Lindqvist: Notes on the p-Laplace equation. Report.
University of Jyv\"{a}skyl\"{a} Department of Mathematics and Statistics
\textbf{102 }(2006). University of Jyv\"{a}skyl\"{a}, Jyv\"{a}skyl\"{a}, ii+80
pp. ISBN: 951-39-2586-2.

\bibitem {LqManf}P. Lindqvist, J. Manfredi: The Harnack inequality for
$\infty$-harmonic functions, Electron. J. Differential Equations 1995 No. 4
(1995) 1--5.

\bibitem {RenWei-p}X. Ren and J. Wei: Counting peaks of solutions to some
quasilinear elliptic equations with large exponents. \textit{J. Differential
Equations }\textbf{117} (1995) 28--55.

\bibitem {Talenti1}G. Talenti: Inequalities in rearrangement invariant
function spaces. Nonlinear Analysis, Function Spaces and Applications, vol. 5.
Prometheus Publishing House, Prague, 1994.

\bibitem {Talenti}G. Talenti: Best constant in Sobolev inequality.
\textit{Ann. Mat. Pura Appl.} \textbf{110 }(1976) 353--372.

\bibitem {Vazquez}J. L. V\'{a}zquez: A strong maximum principle for some
quasilinear elliptic equations. \textit{Appl. Math. Optim.} \textbf{12} (1984) 191--202.

\bibitem {Yu}Y. Yu: Some properties of the ground states of the infinity
Laplacian, \textit{Indiana Univ. Math. J.} \textbf{56} (2007) 947--964.
\end{thebibliography}
\end{document}